\newtheorem*{maintheorem*}{Main Theorem}
\newtheorem{theorem}{Theorem}[section]
\newtheorem*{theorem*}{Main Theorem}
\newtheorem{question}[theorem]{Question}
\newtheorem{prop}[theorem]{Proposition}
\newtheorem{lemma}[theorem]{Lemma}
\newtheorem{cor}[theorem]{Corollary}
\theoremstyle{definition}
\newtheorem{remark}[theorem]{Remark}
\newtheorem{example}[theorem]{Example}
\numberwithin{equation}{section}
\newcommand{\pp}{\mathbb{P}}
\newcommand{\ppp}{\mathcal{P}}
\newcommand{\qq}{\mathbb{Q}}
\newcommand{\uu}{\mathcal{U}}
\newcommand{\zz}{\mathbb{Z}}
\newcommand{\rr}{\mathbb{R}}
\newcommand{\nn}{\mathbb{N}}
\newcommand{\cc}{\mathbb{C}}
\newcommand{\ldb}{\llbracket}
\newcommand{\rdb}{\rrbracket}
\newcommand{\ZZ}{\mathbb{Z}}
\newcommand{\NN}{\mathbb{N}}
\newcommand{\br}[1]{{\left\{ #1 \right\}}}
\newcommand{\pr}[1]{{\left( #1\right)}}
\newcommand{\Pfin}{\mathcal{P}_{\mathrm{fin}}}
\newcommand{\gp}{\mathrm{gp}}
\subjclass[2020]{Primary: 13A05, 13F15; Secondary: 13A15, 13G05}
\begin{document}

\mbox{}
\title{On finitary power monoids \\ of linearly orderable monoids}

\author{Jiya Dani}
\address{Liberal Arts and Science Academy\\Austin, TX  78721}
\email{cs.program2004@gmail.com}

\author{Felix Gotti}
\address{Department of Mathematics\\MIT\\Cambridge, MA 02139}
\email{fgotti@mit.edu}

\author{Leo Hong}
\address{Department of Mathematics\\UNCC\\Charlotte, NC 28223}
\email{lhong6@charlotte.edu}

\author{Bangzheng Li}
\address{Department of Mathematics\\MIT\\Cambridge, MA 02139}
\email{liben@mit.edu}

\author{Shimon Schlessinger}
\address{Harvard-Westlake School\\Los Angeles, CA 91604}
\email{shimonschlessinger@gmail.com}

	
\begin{abstract}
	A commutative monoid $M$ is called a linearly orderable monoid if there exists a total order on $M$ that is compatible with the monoid operation. The finitary power monoid of a commutative monoid $M$ is the monoid consisting of all nonempty finite subsets of $M$ under the so-called sumset. In this paper, we investigate whether certain atomic and divisibility properties ascend from linearly orderable monoids to their corresponding finitary power monoids.
\end{abstract}

\bigskip
\maketitle

\section{Introduction}
\label{sec:intro}

Let $M$ be an additively-written commutative monoid (i.e., a commutative semigroup with an identity element). The power monoid $\mathcal{P}(M)$ of $M$ is the commutative monoid consisting of all nonempty subsets of $M$ under the so-called sumset: for any nonempty subsets $S$ and $T$ of $M$,
\[
	S+T := \{s+t : s \in S \text{ and } t \in T \}.
\]
The finitary power monoid of $M$, denoted here by $\mathcal{P}_{\text{fin}}(M)$, is the submonoid of $\ppp(M)$ consisting of all finite nonempty subsets of $M$. Both power monoids and finitary power monoids have been investigated in the literature of semigroup theory for several decades. Refer to~\cite{tT86} and its references for results up to the 1980s, and to ~\cite{TY23} and its references for more recent results. In the scope of this paper, the algebraic objects we are interested in are finitary power monoids of linearly orderable monoids (i.e., commutative monoids that can be endowed with a total order compatible with their corresponding operations).
\smallskip

In the setting of power monoids, one problem that has received a great deal of attention is the isomorphism problem: this is the problem of deciding, given a class $\mathcal{C}$ of commutative monoids,  whether non-isomorphic monoids in $\mathcal{C}$ induce non-isomorphic (finitary) power monoids. A compendium of progress on the isomorphism problem and further problems in the context of power monoids can be found in~\cite{jP86} as well as in the works cited therein. The study of the isomorphism problem is still quite active. For instance, the isomorphism problem for power monoids of rank-$1$ torsion-free commutative monoids was recently solved in~\cite{sT24} (it follows from~\cite[Section~24]{lF70} and \cite[Theorem~2.9]{rG84} that every rank-$1$ torsion-free commutative monoid that is not a group can be realized as a Puiseux monoid (i.e., an additive submonoid of $\qq_{\ge 0}$)).
\smallskip

Arithmetic and factorization aspects of finitary power monoids were previously studied in~\cite{FT18}, while atomic and ideal-theoretical aspects of finitary power monoids were previously studied in~\cite{BG23} in the setting of numerical monoids (i.e., Puiseux monoids consisting of nonnegative integers). Another classical problem in the setting of (finitary) power monoids that has attracted the attention of several semigroup theorists for many years is that of the potential ascent of monoidal properties, which boils down to the following question: does the fact that a commutative monoid~$M$ satisfies a given property~$\mathfrak{p}$ imply that the (finitary) power monoid of~$M$ also satisfies the property~$\mathfrak{p}$? As for the isomorphism problem, progress in this direction until the eighties can be found in ~\cite{tT86} and in the papers it references. Moreover, in the recent paper~\cite{GLRRT24}, the authors investigate the ascent of atomic and factorization properties from Puiseux monoids to their corresponding finitary power monoids.
\smallskip

In this paper we investigate the ascent of ideal-theoretical and atomic properties from linearly orderable commutative monoids to their corresponding finitary power monoids. In Section~\ref{sec:prelim}, we briefly revise some fundamental notation and terminology we will use throughout this paper.
\smallskip

In Section~\ref{sec:AC of PI}, we study ascending chains of principal ideals in the setting of finitary power monoids. A commutative monoid $M$ satisfies the ascending chain condition on principal ideals (ACCP) if every ascending chain consisting of principal ideals of $M$ eventually stabilizes. Two conditions weaker than the ACCP were introduced by Li and the second author in~\cite{GL23}: the quasi-ACCP and the almost ACCP. The monoid $M$ satisfies the quasi-ACCP (resp., almost ACCP) if for every nonempty finite subset $S$ of $M$, there exists a common divisor (resp., an atomic common divisor) $d \in M$ of $S$ in $M$ and $s \in S$ such that every ascending chain of principal ideals starting at $s-d + M$ eventually stabilizes. We prove that these two properties ascend from any linearly orderable commutative monoid to its corresponding finitary power monoid.
\smallskip

In Section~\ref{sec:atomicity and MCD}, we study the connection between atomicity and the existence of maximal common divisors (MCD) in finitary power monoids of linearly orderable monoids. In~\cite{GLRRT24}, Gonzalez et al. proved that the property of being atomic does not ascend to finitary power monoids on the class of linearly orderable monoids: indeed, they constructed an atomic Puiseux monoid whose power monoid is not atomic. We provide an alternative construction to argue that atomicity does not ascend to finitary power monoids on the class of linearly orderable torsion-free commutative monoids. In the same section, we characterize the linearly orderable monoids whose corresponding finitary power monoids are atomic as follows: for a linearly orderable monoid $M$, the power monoid $\Pfin(M)$ is atomic if and only if $M$ is atomic and every nonempty finite subset of $M$ has an MCD.
\smallskip

In Section~\ref{sec:notions weaker than atomicity}, we investigate properties that are more general than atomicity. Following Lebowitz-Lockard~\cite{nLL19}, we say that the monoid $M$ is nearly atomic if there exists an $s \in M$ such that every element of $s+M$ is atomic. Every atomic monoid is nearly atomic. We produce an atomic Puiseux monoid whose finitary power monoid is not even nearly atomic and, as a result, we obtain that near atomicity does not ascend to finitary power monoids on the class of linearly orderable monoids. Quasi-atomicity and almost atomicity are two generalized notions of atomicity Boynton and Coykendall introduced in their study of graphs of divisibility~\cite{BC15}. Every nearly atomic monoid is almost atomic, while every almost atomic monoid is quasi-atomic. We settle the ascent of both quasi-atomicity and almost atomicity, constructing a rank-$2$ linearly orderable commutative monoid that is almost atomic but its finitary power monoid is not even quasi-atomic.
\smallskip

In Section~\ref{sec:Furstenberg and IDF}, which is the final section of this paper, we study the Furstenberg property. Following \cite{pC17}, we say that the monoid $M$ is a Furstenberg monoid if every non-invertible element of $M$ is divisible by an atom. The Furstenberg property, along with some generalizations, was recently studied in~\cite{nLL19} in the setting of integral domains. We first argue that the Furstenberg property, as well as some generalizations introduced in~\cite{nLL19}, ascend to finitary power monoids on the class of linearly orderable monoids. Following \cite{GW75}, we say that $M$ is an IDF-monoid if every non-invertible element of $M$ is divisible at most by finitely many atoms up to associate. An IDF-monoid that is atomic (resp., Furstenberg) is called a finite factorization monoid (FFM) (resp., a TIDF-monoid). The finite factorization property and the TIDF property were coined and first studied by Anderson, Anderson, and Zafrullah~\cite{AAZ90} and by Gotti and Zafrullah~\cite{GZ23}, respectively. We conclude this paper proving that both properties ascend to finitary power monoids on the class of Archimedean positive monoids.

\bigskip
\section{Preliminary}
\label{sec:prelim}

\medskip
\subsection{General Notation}

As is customary, $\zz$, $\qq$, $\rr$, and $\cc$ will denote the set of integers, rational numbers, real numbers, and complex numbers, respectively. We let $\nn$ and $\nn_0$ denote the set of positive and nonnegative integers, respectively. Also, we let $\pp$ denote the set of primes. For $b,c \in \zz$ with $b \le c$, we let $\ldb b,c \rdb$ denote the set of integers between~$b$ and $c$:
\[
	\ldb b,c \rdb = \{n \in \zz : b \le n \le c\}.
\]
In addition, for $S \subseteq \rr$ and $r \in \rr$, we set
\[
	S_{\ge r} := \{s \in S : s \ge r\} \quad \emph{ and } \quad S_{> r} := \{s \in S : s > r\}.
\]
For a nonzero $q \in \qq$, let $(n,d)$ be the unique pair with $n \in \zz$ and $d \in \nn$ such that $q = \frac nd$ and $\gcd(n,d) = 1$. We will denote $n$ and $d$ by $\mathsf{n}(q)$ and $\mathsf{d}(q)$, respectively, setting $\mathsf{d}(S) := \{\mathsf{d}(s) : s \in S\}$ for any subset $S$ of $\qq \setminus \{0\}$. For each $p \in \pp$ and $n \in \zz \setminus \{0\}$, we let $v_p(n)$ denote the $p$-adic valuation of $n$, that is, the maximum $m \in \nn_0$ such that $p^m \mid n$, and for $q \in \qq \setminus \{0\}$, we set $v_p(q) := v_p(\mathsf{n}(q))-v_p(\mathsf{d}(q))$ (after defining $v_p(0) := \infty$, the map $v_p \colon \qq \to \zz \cup \{\infty\}$ is the $p$-adic valuation map).

\medskip
\subsection{Commutative Monoids}

We recall that a monoid is a semigroup with an identity element. Throughout this paper, identity elements are required to be inherited by submonoids and preserved by monoid homomorphisms. Moreover, we will tacitly assume that all monoids we will deal with are commutative and additively written. Let $M$ be a monoid. We set $M^\bullet := M \setminus \{0\}$, and we say that $M$ is \emph{trivial} if $M = \{0\}$. The monoid $M$ is called \emph{cancellative} if for all $a,b,c \in M$, the equality $a+b = a+c$ implies that $b=c$. Also, $M$ is called \emph{torsion-free} if for all $b,c \in M$ and $n\in \nn$, the equality $nb = nc$ implies $b=c$. The group of invertible elements of~$M$ is denoted by $\uu(M)$, and $M$ is called \emph{reduced} if the only invertible element of $M$ is $0$. The quotient $M/\uu(M)$ is a monoid that is called the \emph{reduced monoid} of $M$ and is denoted by $M_{\text{red}}$. 
\smallskip

The group $\gp(M)$ consisting of all the formal differences of elements of $M$ (under the operation naturally extended from that of $M$) is called the \emph{Grothendieck group} of $M$. When $M$ is cancellative, it can be minimally embedded into its Grothendieck group and this embedding is minimal in the following sense: $\gp(M)$ is the unique abelian group up to isomorphism such that any abelian group containing an isomorphic copy of $M$ will also contain an isomorphic copy of $\gp(M)$. The \emph{rank} of a cancellative monoid $M$ is defined to be the rank of $\gp(M)$ as a $\zz$-module or, equivalently, the dimension of the $\qq$-vector space $\qq \otimes_\zz \gp(M)$. Thus, the rank of a cancellative monoid gives a sense of its size or, more accurately, the size of the smallest vector space that contains one of its isomorphic copies. It follows from~\cite[Section~24]{lF70} and \cite[Theorem~2.9]{rG84} that a cancellative torsion-free monoid has rank $1$ if and only if it is isomorphic to an additive submonoid of $\qq$. The additive submonoids of $\qq$ that are not nontrivial groups are called \emph{Puiseux monoids} and have been actively investigated during the past decades. They will be helpful in this paper to provide the most significant (counter)examples we need.
\smallskip

Let $S$ be a subset of~$M$. We let $\langle S \rangle$ denote the smallest submonoid of $M$ containing ~$S$, and we call~$\langle S \rangle$ the submonoid of~$M$ \emph{generated} by~$S$. If $M = \langle S \rangle$, then $S$ is called a \emph{generating set} of~$M$, and~$M$ is called \emph{finitely generated} provided that $M$ has a finite generating set. Observe that any finitely generated Puiseux monoid is isomorphic to an additive (co-finite) submonoid of $\nn_0$ (additive co-finite submonoids of $\nn_0$ are called \emph{numerical monoids}).
\smallskip

For $b,c \in M$, we say that $c$ (\emph{additively}) \emph{divides} $b$ if $b = c+d$ for some $d \in M$, in which case we write $c \mid_M b$. A submonoid $N$ of $M$ is said to be \emph{divisor-closed} provided that the only pairs $(b,c) \in N \times M$ with $c \mid_M b$ are those with $c \in N$. 
A \emph{maximal common divisor} (MCD) of a nonempty subset $S$ of~$M$ is a common divisor $d \in M$ of $S$ such that the only common divisors of the set $\{s-d : s \in S\}$ are the invertible elements of~$M$. The monoid $M$ is called an \emph{MCD-monoid} provided that every nonempty finite subset of~$M$ has an MCD. Also, for $k \in \nn$, we say that $M$ is a $k$-\emph{MCD-monoid} if every subset of~$M$ with cardinality $k$ has a maximal common divisor. Observe that every monoid is a $1$-MCD-monoid, while a monoid is an MCD-monoid if and only if it is a $k$-MCD-monoid for every $k \in \nn$. The notion of a $k$-MCD monoid seems to be introduced by Roitman in~\cite{mR93}.

\smallskip
\subsection{Atomicity and Ascending Chains of Principal Ideals}

An element $a \in M \! \setminus \! \uu(M)$ is called an \emph{atom} (or \emph{irreducible}) if whenever $a = b+c$ for some $b,c \in M$, then either $b \in \uu(M)$ or $c \in \uu(M)$. The set of atoms of $M$ is denoted by $\mathcal{A}(M)$. An element $b \in M$ is called \emph{atomic} if either $b \in \uu(M)$ or $b$ can be written as a sum of finitely many atoms (allowing repetitions). As coined by Cohn~\cite{pC68}, the monoid $M$ is \emph{atomic} if every element of $M$ is atomic. Following the more recent paper~\cite{nLL19} by Lebowitz-Lockard, we say that $M$ is \emph{nearly atomic} if there exists $c \in M$ such that $b+c$ is atomic for all $b \in M$. It follows directly from the definitions that every atomic monoid is nearly atomic. Following Boynton and Coykendall~\cite{BC15}, we say that the monoid $M$ is \emph{almost atomic} (resp., \emph{quasi-atomic}) provided that for each $b \in M$, there exists an atomic element (resp., an element) $c \in M$ such that $b+c$ is atomic. One can verify that every nearly atomic monoid is almost atomic, and it follows directly from the definitions that every almost atomic monoid is quasi-atomic.
\smallskip

A subset $I$ of $M$ is said to be an \emph{ideal} of~$M$ provided that $I + M := \{b+c : b \in I \text{ and } c \in M\} = I$ (or, equivalently, $I + M \subseteq I$). An ideal $I$ is \emph{principal} if the equality $I = b + M$ holds for some $b \in M$. An element $b \in M$ is said to satisfy the \emph{ascending chain condition on principal ideals} (ACCP) if every ascending chain of principal ideals of $M$ containing the ideal $b+ M$ stabilizes. The monoid $M$ is said to satisfy the \emph{ACCP} if every element of $M$ satisfies the ACCP. An ascending chain of principal ideals of $M$ is said \emph{to start} at an element $b \in M$ if the first ideal in the chain is $b+M$. Two ideal-theoretical notions weaker than the ACCP were introduced in~\cite{GL23}: the quasi-ACCP and the almost ACCP. We say that $M$ satisfies the \emph{almost ACCP} (resp., \emph{quasi-ACCP}) if for any nonempty finite subset $S$ of~$M$, there exists an atomic common divisor (resp., a common divisor) $d \in M$ of $S$ such that for some $s \in S$ the element $s-d$ satisfies the ACCP. It follows directly from the previous definitions that every monoid that satisfies the almost ACCP also satisfies the quasi-ACCP.
\smallskip

It is well known that every cancellative monoid that satisfies the ACCP is atomic (see~\cite[Proposition~1.1.4]{GH06}), and so every cancellative monoid that satisfies the ACCP also satisfies the almost ACCP. However, not every atomic monoid satisfies the ACCP, and several examples of cancellative monoids and integral domains witnessing this observation can be found in recent papers, including~\cite{GL23}. Moreover, it follows from \cite{GL23} that every monoid satisfying the almost ACCP is atomic (in Section~\ref{sec:atomicity and MCD}, we will construct a new rank-$1$ atomic monoid that does not satisfy the almost ACCP). The property of satisfying the quasi-ACCP does not imply that of being atomic as illustrated by the simple Puiseux monoid~$\qq_{\ge 0}$.

\medskip
\subsection{Irreducible Divisors and Factorizations}

We say that $M$ is a \emph{Furstenberg monoid} or satisfies the \emph{Furstenberg property} if every non-invertible element of $M$ is divisible by an atom. The Furstenberg property was introduced by Clark in~\cite{pC17}. It follows from the definitions that every atomic monoid is a Furstenberg monoid. Following Grams and Warner~\cite{GW75}, we say that $M$ is an \emph{IDF-monoid} if every element of $M$ is divisible by only finitely many atoms up to associates (two elements of $M$ are \emph{associates} if their differences belong to $\uu(M)$). Then we say that $M$ is a \emph{TIDF-monoid} provided that it is a Furstenberg IDF-monoid. The TIDF (tightly irreducible divisor finite) property was introduced and first investigated by Zafrullah and the second author in~\cite{GZ23}.
\smallskip

Now assume that the monoid $M$ is atomic, which is equivalent to the fact that the reduced monoid $M_{\text{red}}$ is atomic. Let $\mathsf{Z}(M)$ be the free (commutative) monoid on the set of atoms $\mathcal{A}(M_{\text{red}})$. The elements of $\mathsf{Z}(M)$ are called \emph{factorizations}. Let $\pi \colon \mathsf{Z}(M) \to M_\text{red}$ be the unique monoid homomorphism fixing the set $\mathcal{A}(M_{\text{red}})$. For any $b \in M$, we set $\mathsf{Z}(b) := \pi^{-1}(b + \mathcal{U}(M))$ and call the elements of $\mathsf{Z}(b)$ (\emph{additive}) \emph{factorizations} of~$b$. If $|\mathsf{Z}(b)| < \infty$ for every $b \in M$, then $M$ is called a \emph{finite factorization monoid} (or an FFM for short). It follows from \cite[Theorem~2]{fHK92} that a monoid is an FFM if and only if it is an atomic IDF-monoid. In particular, every FFM is a TIDF-monoid. It follows from \cite[Corollary~1.4.4]{GH06} that every FFM satisfies the ACCP.
\smallskip

\medskip
\subsection{Linearly Ordered Groups and Monoids}

The class consisting of linearly ordered monoids contains all Puiseux monoids and plays a fundamental role in this paper. The monoid $M$ is called \emph{linearly ordered} with respect to a total order relation $\preceq$ on $M$ if $\preceq$ is \emph{compatible} with the operation of $M$, which means that for all $b,c,d \in M$ the order relation $b \prec c$ ensures that $b+d \prec c+d$. 
We say that the monoid~$M$ is \emph{linearly orderable} provided that $M$ is a linearly ordered monoid with respect to some total order relation on~$M$. 
More than a century ago, it was proved by Levi~\cite{fL13} that every torsion-free abelian group is a linearly orderable monoid (or, simply, linearly orderable). 
From this Levi's result one can deduce the following well-known theorem. 

\begin{theorem} \label{thm:Levi's consequence}
    A monoid is linearly orderable if and only if it is cancellative and torsion-free.
\end{theorem}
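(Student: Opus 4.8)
The statement is an "if and only if." For the forward direction, I would show that a linearly orderable monoid $M$ must be cancellative and torsion-free. For cancellativity: suppose $b + d = c + d$ but $b \neq c$; without loss of generality $b \prec c$. Then compatibility of $\preceq$ with the operation gives $b + d \prec c + d$, contradicting $b + d = c + d$. For torsion-freeness: suppose $n b = n c$ with $n \in \nn$ but $b \neq c$; again assume $b \prec c$. Applying compatibility $n$ times (adding $b$, then $c$, in the standard telescoping fashion — first $b + (n-1)b \prec c + (n-1)b$, then iterate) yields $nb \prec nc$, a contradiction. So the forward direction is short and purely formal.

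**The backward direction.** Assume $M$ is cancellative and torsion-free; I must produce a compatible total order on $M$. The idea is to pass to the Grothendieck group $\gp(M)$: since $M$ is cancellative, it embeds into $\gp(M)$, and since $M$ is torsion-free, $\gp(M)$ is a torsion-free abelian group (any torsion element of $\gp(M)$ would be a difference $b - c$ with $n(b-c) = 0$, i.e. $nb = nc$, forcing $b = c$ by torsion-freeness, so $\gp(M)$ is torsion-free). Now invoke Levi's theorem, cited in the excerpt: every torsion-free abelian group is linearly orderable, so there is a total order $\preceq$ on $\gp(M)$ compatible with its group operation. Restricting $\preceq$ to the embedded copy of $M$ gives a total order on $M$, and compatibility is inherited because the operation on $M$ is the restriction of the operation on $\gp(M)$. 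Hence $M$ is linearly orderable.

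**Main obstacle.** There is no serious obstacle: the backward direction rests entirely on Levi's theorem, which the excerpt explicitly allows us to assume, and the remaining verifications (that $\gp(M)$ is torsion-free, that the restricted order stays compatible) are routine. The only point requiring a little care is making sure the embedding $M \hookrightarrow \gp(M)$ is available — this needs cancellativity, which we have assumed — and that torsion-freeness genuinely transfers from $M$ to $\gp(M)$ rather than just being a property of $M$ itself. Once those two facts are in hand, the proof is immediate.
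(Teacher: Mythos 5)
Your proof is correct and follows exactly the route the paper intends: the forward direction is the routine order-theoretic verification, and the backward direction passes to the Grothendieck group and invokes Levi's theorem, which is precisely how the paper says this ``well-known theorem'' is deduced (the paper itself supplies no proof). The two points you flag as needing care --- that cancellativity is what makes the embedding $M \hookrightarrow \gp(M)$ injective, and that torsion-freeness of $M$ transfers to $\gp(M)$ --- are handled correctly.
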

\smallskip

Let $G$ be a linearly ordered abelian group (additively written) with respect to a total order relation~$\preceq$. The \emph{nonnegative cone} of $G$ is the submonoid $G^+$ of $G$ consisting of all nonnegative elements; that is,
\[
    G^+ := \{g \in G : 0 \preceq g \}.
\]
A submonoid of $G^+$ is called a \emph{positive submonoid} of $G$. In general, the monoid $M$ is called a \emph{positive monoid} provided that $M$ is isomorphic to a submonoid of the nonnegative cone of a linearly ordered abelian group. Observe that, as a consequence of Theorem~\ref{thm:Levi's consequence}, if the monoid $M$ is cancellative, reduced, and torsion-free, then its Grothendieck group $\gp(M)$ can be turn into a linearly ordered monoid so that~$M$ is a positive monoid of $\gp(M)$.
\smallskip

For $g \in G$, we set $|g| := \max\{\pm g \}$. For $g,h \in G$, we write $g = \textbf{O}(h)$ whenever $|g| \preceq n|h|$ for some $n \in \nn$. Now consider the equivalence relation $\sim$ on~$G$ obtained as follows: for $g,h \in G$, write $g \sim h$ whenever both equalities $g = \textbf{O}(h)$ and $h = \textbf{O}(g)$ hold. Set $\Gamma_G := (G \setminus \{0\})/\!\sim$ and consider the quotient map $v \colon G \setminus \{0\} \to \Gamma_G$. Then the binary relation $\le$ on $\Gamma$ defined by writing $v(g) \le v(h)$ for any $g,h \in G \setminus \{0\}$ such that $h = \textbf{O}(g)$ is a total order relation. The elements of $\Gamma_G$ are called \emph{Archimedean classes} of~$G$, and the quotient map $v$ is called the \emph{Archimedean valuation} on $G$. The group $G$ is called \emph{Archimedean} provided that $\Gamma_G$ is a singleton. A monoid is called \emph{Archimedean} if it is a positive monoid of an Archimedean group. According to one of the well-known H\"older's theorems, a linearly orderable abelian group is Archimedean if and only if it is order-isomorphic to a subgroup of the additive group~$\rr$.

\smallskip
\subsection{Finitary Power Monoid}

Let $M$ be a monoid. We let $\Pfin(M)$ denote the monoid consisting of all nonempty finite subsets of $M$ under the so-called sumset operation: for any nonempty finite subsets $S$ and $T$ of $M$,
\[
	S + T := \{ s + t : (s,t) \in S \times T\}.
\]
The monoid $\Pfin(M)$ is called the \emph{finitary power monoid} of $M$. To simplify notation, in the scope of this paper we call the monoid $\Pfin(M)$ the \emph{power monoid} of $M$\footnote{In general, the power monoid of $M$ is the larger monoid consisting of all nonempty subsets of $M$ under the same sumset operation.}.

We say that the monoid $M$ is \emph{unit-cancellative} provided that for all $b,c \in M$ the equality $b + c = b$ implies that $c \in \uu(M)$. It follows from~\cite[Proposition~3.5]{FT18} that if $M$ is a linearly orderable monoid, then $\Pfin(M)$ is a unit-cancellative monoid. On the other hand, it is worth emphasizing that power monoids are extremely non-cancellative in the sense that $\Pfin(M)$ is cancellative if and only if the monoid $M$ is trivial. We proceed to prove some preliminary results about power monoids that we will need in the coming sections.

\begin{lemma} \label{lem:min and max} 
	Let $M$ be a linearly ordered monoid. For any $A,B,C \in \mathcal{P}_\emph{fin}(M)$ with $A+B = C$, the following statements hold:
	\[
		\min A + \min B = \min C \quad \text{ and } \quad \max A + \max B = \max C.
	\]
\end{lemma}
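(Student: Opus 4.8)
The plan is to work directly with a compatible total order $\preceq$ witnessing that $M$ is linearly ordered, proving the statement about minima first; the statement about maxima then follows by an entirely symmetric argument, namely by applying the minimum statement to the reversed order $\preceq'$ defined by $b \preceq' c \iff c \preceq b$, which is again a total order compatible with the operation of $M$ and under which maxima become minima. The first routine observation I would record is that compatibility, which as stated concerns only strict inequalities, upgrades to the non-strict version: if $b \preceq c$ then $b + d \preceq c + d$ for every $d \in M$, since the case $b \prec c$ is the hypothesis and the case $b = c$ is immediate. I would also note that every element of $\Pfin(M)$, being a nonempty finite subset of the chain $(M, \preceq)$, has a well-defined minimum and maximum, so the quantities in the statement make sense.

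For the inequality $\min C \preceq \min A + \min B$, it suffices to observe that $\min A + \min B$ is an element of $A + B = C$, hence is at least $\min C$. For the reverse inequality, I would take an arbitrary $c \in C = A + B$ and write $c = a + b$ with $a \in A$ and $b \in B$. Applying non-strict compatibility twice — first adding $\min B$ to the relation $\min A \preceq a$, then adding $a$ to the relation $\min B \preceq b$ — gives
\[
	\min A + \min B \preceq a + \min B \preceq a + b = c.
\]
Since this holds for every $c \in C$, it holds in particular for $c = \min C$, which yields $\min A + \min B \preceq \min C$. Combining the two inequalities gives $\min A + \min B = \min C$, and, as indicated, the equality $\max A + \max B = \max C$ is obtained by repeating this argument for the reversed order (or, equivalently, running the same two-inequality argument verbatim with all inequalities reversed).

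I do not anticipate a genuine obstacle here: the argument is essentially a direct unwinding of the definition of the sumset together with compatibility of the order. The only points requiring a moment's care are the passage from the strict to the non-strict form of compatibility and the implicit use of the fact that a nonempty finite subset of a chain possesses a least and a greatest element; both are routine and are dispatched in the opening lines of the proof.
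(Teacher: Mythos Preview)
Your proof is correct and essentially identical to the paper's: both establish $\min A + \min B = \min C$ by the two-inequality argument (one direction from $\min A + \min B \in C$, the other from writing an element of $C$ as $a+b$ and applying compatibility), and then defer the maximum case to symmetry. Your write-up simply makes explicit a few routine points (non-strict compatibility, existence of minima in finite chains, the reversed-order trick) that the paper leaves implicit.
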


\begin{proof}
	We only verify that $\min A + \min B = \min C$ as the other identity follows similarly. Let $\preceq$ be the total order relation under which $M$ is a linearly ordered monoid. Since $\min C$ belongs to $C$ and $C = A+B$, we can take some $a \in A$ and $b \in B$ such that $a+b = \min C$. As $\min A \preceq a$ and $\min B \preceq b$, $\min A+\min B \preceq \min C$. On the other hand, the fact that $\min A + \min B \in A+B = C$ ensures that $\min C \preceq \min A + \min B$. 
\end{proof}

The following corollary is an immediate consequence of Lemma~\ref{lem:min and max}.

\begin{cor}  \label{cor:divisibility from min and max} 
	Let $M$ be a linearly ordered monoid. If $A, B \in \mathcal{P}_\emph{fin}(M)$, then $A \mid_{\mathcal{P}_\emph{fin}(M)} B$ implies that $\min A\mid_M \min B$.
\end{cor}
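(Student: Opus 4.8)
The plan is simply to unwind the definition of divisibility in $\Pfin(M)$ and then invoke Lemma~\ref{lem:min and max}. First I would recall that, since $M$ is linearly ordered, every nonempty finite subset of $M$ has a (unique) minimum with respect to the relevant total order $\preceq$, so the symbols $\min A$, $\min B$, and $\min C$ below are well defined. The hypothesis $A \mid_{\Pfin(M)} B$ means, by definition, that there is some $C \in \Pfin(M)$ with $A + C = B$; note that $C$ is automatically a nonempty finite subset of $M$, which is exactly what is needed for Lemma~\ref{lem:min and max} to apply.

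Next I would apply the first identity of Lemma~\ref{lem:min and max} to the equality $A + C = B$, obtaining
\[
    \min A + \min C = \min B.
\]
Since $\min C \in M$, this equality exhibits $\min B$ as $\min A$ plus an element of $M$, i.e.\ $\min A \mid_M \min B$, which is the desired conclusion.

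I do not expect any genuine obstacle: the statement is a direct corollary, and the only point requiring (minimal) care is to observe that $\Pfin(M)$-divisibility is witnessed by an element of $\Pfin(M)$, so that Lemma~\ref{lem:min and max} can be used verbatim. (The same argument applied to the max identity would yield $\max A \mid_M \max B$, but only the min statement is claimed here.)
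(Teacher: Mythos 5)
Your proof is correct and is exactly the argument the paper has in mind: the corollary is stated as an immediate consequence of Lemma~\ref{lem:min and max}, obtained by writing $A+C=B$ and reading off $\min A+\min C=\min B$. Nothing further is needed.
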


The following lemma will also be helpful later.

\begin{lemma} \label{lem: shimon 2}
	Let $M$ be a linearly ordered monoid. For $A,B \in \mathcal{P}_\emph{fin}(M)$ such that $A \mid_{\mathcal{P}_\emph{fin}(M)} B$, if $\min A = \min B$, then either $A=B$ or $|A|<|B|$.
\end{lemma}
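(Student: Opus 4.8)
The plan is to extract a concrete witness for the divisibility and then use Lemma~\ref{lem:min and max} to control its minimum. Since $A \mid_{\Pfin(M)} B$, there is some $C \in \Pfin(M)$ with $A + C = B$. Applying Lemma~\ref{lem:min and max} to this equality yields $\min A + \min C = \min B$, and the hypothesis $\min A = \min B$ gives $\min A + \min C = \min A$. Because $M$ is linearly orderable, it is cancellative (Theorem~\ref{thm:Levi's consequence}), and so $\min C = 0$; in other words, the identity element of $M$ belongs to $C$.

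Once we know $0 \in C$, the inclusion follows at once: $A = A + \{0\} \subseteq A + C = B$, so $A \subseteq B$. Since both $A$ and $B$ are finite sets, this inclusion forces the dichotomy in the statement: either $A = B$, or $A$ is a proper subset of $B$ and hence $|A| < |B|$. This completes the argument.

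The proof is short and the only step deserving explicit mention is the passage from $\min A + \min C = \min A$ to $\min C = 0$, which is precisely where the cancellativity of a linearly orderable monoid is invoked; beyond that, no real obstacle arises, as everything reduces to the elementary fact that the identity of $M$ lies in any factor $C$ with $\min(A+C) = \min A$.
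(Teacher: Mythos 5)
Your proof is correct and follows essentially the same route as the paper's: extract $C$ with $A+C=B$, use Lemma~\ref{lem:min and max} to get $\min C = 0$, and conclude $A \subseteq B$. The only cosmetic difference is that you justify $\min C = 0$ via cancellativity, a step the paper leaves implicit; both versions are fine.
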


\begin{proof}
	Take $A,B \in \mathcal{P}_\text{fin}(M)$ such that $A \mid_{\mathcal{P}_\text{fin}(M)} B$, and assume that $\min A = \min B$. If $A=B$, then we are done. Therefore assume that $A\neq B$. Since $A \mid_{\mathcal{P}_\text{fin}(M)} B$, we can take $D \in \mathcal{P}_\text{fin}(M)$ such that $A+D = B$. By Lemma~\ref{lem:min and max}, the equality $\min A + \min D = \min B$ holds. Hence the equality $\min A = \min B$ implies that $\min D = 0$. As a result, $A = A + \{0\} \subseteq A + D = B$, and so the inequality $|A| < |B|$ follows from the fact that $A \neq B$.
\end{proof}


\bigskip
\section{Ascending Chains of Principal Ideals}
\label{sec:AC of PI}

It is known that if a linearly orderable monoid $M$ satisfies the ACCP, then the power monoid of $M$ also satisfies the ACCP. In this section, we will establish parallel ascent results for the quasi-ACCP and the almost ACCP. Before proving the ascent of these two properties, we need the following preliminary known lemma (we include a proof here for the sake of completeness).

\begin{lemma} \label{lem:size of the sum}
	Let $M$ be a linearly orderable monoid. For any $S,T \in \mathcal{P}_\emph{fin}(M)$, the following statements hold.
	\begin{enumerate}
		\item $|S+T| \ge |S| + |T| - 1 \ge \max\{|S|,|T|\}$.
		\smallskip
		
		\item If $|S| \ge 2$, then $|S+T| > |T|$.
	\end{enumerate}
\end{lemma}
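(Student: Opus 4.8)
The plan is to exploit the linear order on $M$ to exhibit an explicit "staircase" of pairwise distinct elements inside $S+T$. Fix a total order $\preceq$ on $M$ compatible with the monoid operation, and write $S = \{s_1, \dots, s_m\}$ and $T = \{t_1, \dots, t_n\}$ with $s_1 \prec s_2 \prec \cdots \prec s_m$ and $t_1 \prec t_2 \prec \cdots \prec t_n$, so that $m = \abs{S}$ and $n = \abs{T}$ (both at least $1$, since $S$ and $T$ are nonempty).

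First I would observe that the chain
\[
	s_1 + t_1 \prec s_1 + t_2 \prec \cdots \prec s_1 + t_n \prec s_2 + t_n \prec s_3 + t_n \prec \cdots \prec s_m + t_n
\]
consists of strictly increasing elements of $M$: each of the first $n-1$ inequalities follows from $t_j \prec t_{j+1}$ by adding $s_1$, and each of the remaining $m-1$ inequalities follows from $s_i \prec s_{i+1}$ by adding $t_n$, using in both cases that $\preceq$ is compatible with the operation of $M$. Since these $m+n-1$ terms all lie in $S+T$ and are pairwise distinct, we obtain $\abs{S+T} \ge m+n-1 = \abs{S}+\abs{T}-1$, which is the first inequality in~(1). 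The second inequality in~(1) is then immediate: from $\abs{T} \ge 1$ we get $\abs{S}+\abs{T}-1 \ge \abs{S}$, and symmetrically $\abs{S}+\abs{T}-1 \ge \abs{T}$, whence $\abs{S}+\abs{T}-1 \ge \max\{\abs{S},\abs{T}\}$. Finally, part~(2) follows at once from part~(1): if $\abs{S} \ge 2$, then $\abs{S+T} \ge \abs{S}+\abs{T}-1 \ge \abs{T}+1 > \abs{T}$.

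There is no serious obstacle here; the only point requiring any care is verifying that the displayed staircase is genuinely strictly increasing, and this is exactly where the compatibility of the order with the operation (guaranteed since $M$ is linearly orderable, cf.\ Theorem~\ref{thm:Levi's consequence}) is used.
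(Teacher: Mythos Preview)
Your proof is correct. The staircase $s_1+t_1 \prec \cdots \prec s_1+t_n \prec s_2+t_n \prec \cdots \prec s_m+t_n$ gives a clean self-contained argument for the inequality $|S+T| \ge |S|+|T|-1$, and deducing~(2) from~(1) is immediate.

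The paper handles the two parts differently. For~(1) it does not reprove the bound but simply cites \cite[Proposition~3.5]{FT18}; your staircase argument is precisely the standard proof of that proposition, so you are supplying what the paper outsources. For~(2) the paper gives an \emph{independent} argument rather than appealing to~(1): it picks $s := \min S$, some $r \in S \setminus \{s\}$, and $t := \min T$, then observes that $s+t \prec r+t = \min(\{r\}+T)$, so $\{s+t\} \cup (\{r\}+T)$ is a subset of $S+T$ of size $|T|+1$. This is essentially your staircase truncated to two rows. Your route is more economical (one argument covers both parts), while the paper's separation of~(2) from~(1) makes~(2) available even to a reader who has not internalized the full sumset bound.
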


\begin{proof}
	(1) Take $S,T \in \mathcal{P}_\text{fin}(M)$. The first inequality $|S+T| \ge |S| + |T| - 1$ is \cite[Proposition~3.5]{FT18}, and the second inequality follows immediately.
	\smallskip
	
	(2) 
    Let $\preceq$ be a total order relation on $M$ turning $M$ into a linearly ordered monoid. Set $s := \min S$ and $t := \min T$. Because $|S| \ge 2$, we can take $r \in S \setminus \{s\}$. Now observe that $s+t \prec r+t = \min( \{r\} + T )$, so $\{s+t\} \notin \{r\}+T$. This means that $|\{s+t\} \cup (\{r\} + T)| = |T| + 1$. Finally, the inclusion $\{s+t\} \cup (\{r\}+T) \subseteq S+T$, along with the fact that $T$ and $\{r\}+T$ have the same cardinality, guarantees that $|S+T| \ge |T|+1 > |T|$.
\end{proof}

For any monoid $M$, it is clear that the set of all singletons in $M$ is a submonoid of $\Pfin(M)$. In light of the second part of the previous lemma, we obtain that such a submonoid is divisor-closed. We record this easy remark here for future reference.

\begin{cor}
    Let $M$ be a linearly orderable monoid. Then $\{S \in \Pfin(M) : |S|=1 \}$ is a divisor-closed submonoid of $\Pfin(M)$.
\end{cor}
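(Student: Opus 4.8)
The plan is to verify directly that the set of singletons $\s := \{S \in \Pfin(M) : |S| = 1\}$ is both a submonoid and divisor-closed, the submonoid part being essentially trivial and the divisor-closed part being exactly the content of Lemma~\ref{lem:size of the sum}(2). First I would observe that $\{0\}$ is the identity of $\Pfin(M)$ and lies in $\s$, and that for any $a, b \in M$ we have $\{a\} + \{b\} = \{a + b\}$, so $\s$ is closed under the sumset operation; hence $\s$ is a submonoid of $\Pfin(M)$.

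For divisor-closedness I need to show that whenever $A \in \s$ and $B \in \Pfin(M)$ satisfy $B \mid_{\Pfin(M)} A$, then $B \in \s$. So suppose $B + D = A$ for some $D \in \Pfin(M)$. If $|B| \ge 2$, then Lemma~\ref{lem:size of the sum}(2) (applied with $S = B$ and $T = D$) gives $|A| = |B + D| > |D| \ge 1$, contradicting $|A| = 1$. Therefore $|B| = 1$, i.e., $B \in \s$, which is what we wanted. This completes the proof.

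There is no real obstacle here; the statement is an immediate corollary of the cardinality bound just established, and the phrase "in light of the second part of the previous lemma" in the text already signals that the argument is this short. The only point worth being slightly careful about is to frame divisor-closedness using the definition given in the preliminaries — namely that the pairs $(A, B) \in \s \times \Pfin(M)$ with $B \mid_{\Pfin(M)} A$ are exactly those with $B \in \s$ — rather than merely checking closure under the monoid operation, since "divisor-closed" is a strictly stronger condition than "submonoid."
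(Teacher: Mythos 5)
Your proof is correct and matches the paper's intended argument: the paper records this corollary as an immediate consequence of Lemma~\ref{lem:size of the sum}(2), exactly as you use it, and your check that the singletons form a submonoid plus the cardinality contradiction for a non-singleton divisor is the whole content. Nothing is missing.
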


We are in a position to establish the main result of this section, the ascent of the quasi-ACCP and the almost ACCP to power monoids in the class of linearly orderable monoids.

\begin{theorem}
	Let $M$ be a linearly orderable monoid. Then the following statements hold.
	\begin{enumerate}
		\item If $M$ satisfies the quasi-ACCP, then $\mathcal{P}_\emph{fin}(M)$ also satisfies the quasi-ACCP.
		\smallskip
		
		\item If $M$ satisfies the almost ACCP, then $\mathcal{P}_\emph{fin}(M)$ also satisfies the almost ACCP.
	\end{enumerate}
\end{theorem}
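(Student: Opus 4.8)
The plan is to reduce both parts to the following key dichotomy, which follows immediately from Lemma~\ref{lem:size of the sum}(2): if $S \in \Pfin(M)$ satisfies $|S| \geq 2$, then every ascending chain of principal ideals of $\Pfin(M)$ starting at $S$ must stabilize, because divisibility forces the cardinalities of the successive ``quotients'' to strictly decrease whenever a non-unit divisor of cardinality $\geq 2$ is removed, and the cardinality of a proper divisor cannot exceed that of the element it divides. More precisely, if $S_1 + \Pfin(M) \subsetneq S_2 + \Pfin(M) \subsetneq \cdots$ is a strictly ascending chain with $S_1 = S$, then for each $i$ we have $S_{i+1} \mid_{\Pfin(M)} S_i$, and since $U(\Pfin(M)) = \{\{u\} : u \in U(M)\}$ one can check that the cardinalities $|S_1| \geq |S_2| \geq \cdots$ are nonincreasing and that a strict containment with both terms of cardinality $\geq 2$ cannot persist indefinitely; in fact, once we reach a term of cardinality $1$ we are inside the divisor-closed submonoid of singletons (by the Corollary just stated), where the problem transfers back to $M$. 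So the only genuinely delicate case is when $S$ itself is a singleton.

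Hence, first I would dispose of the singleton case by exploiting the fact that $\{S \in \Pfin(M) : |S| = 1\}$ is divisor-closed and isomorphic to $M$ (via $\{a\} \mapsto a$, using that $M$ is linearly orderable, hence cancellative). Under this isomorphism, a principal ideal $\{a\} + \Pfin(M)$ restricted to the singleton submonoid corresponds to $a + M$; but the restriction is not the whole story since $\{a\}$ may have non-singleton multiples in $\Pfin(M)$ — however, Lemma~\ref{lem:size of the sum}(2) shows that if $T$ divides $\{a\}$ in $\Pfin(M)$ then $|T| = 1$, so the divisor-closedness means that ascending chains of principal ideals of $\Pfin(M)$ starting at a singleton $\{a\}$ correspond exactly to ascending chains of principal ideals of $M$ starting at $a$. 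Therefore $\{a\}$ satisfies the ACCP in $\Pfin(M)$ if and only if $a$ satisfies the ACCP in $M$.

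Now I would handle part (1). Let $\mathcal{S}$ be a nonempty finite subset of $\Pfin(M)$; I must produce a common divisor $D \in \Pfin(M)$ of $\mathcal{S}$ and some $S \in \mathcal{S}$ such that $S - D$ satisfies the ACCP in $\Pfin(M)$ (here $S - D$ denotes the element $E$ with $D + E = S$). Pick any $S \in \mathcal{S}$. If $|S| \geq 2$, then by the dichotomy above $S$ already satisfies the ACCP in $\Pfin(M)$, so take $D = \{0\}$ and we are done. If instead $|S| = 1$ for the chosen $S$ — and indeed if $|S| = 1$ for \emph{every} $S \in \mathcal{S}$ — then each $S = \{a_S\}$ is a singleton, and a common divisor of $\mathcal{S}$ of cardinality $\geq 2$ is impossible (it would divide a singleton); so every common divisor of $\mathcal{S}$ in $\Pfin(M)$ is a singleton $\{d\}$ with $d$ a common divisor in $M$ of the finite set $\{a_S : S \in \mathcal{S}\}$. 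Apply the quasi-ACCP of $M$ to this finite subset of $M$: we get a common divisor $d \in M$ and some $a_{S_0}$ with $a_{S_0} - d$ satisfying the ACCP in $M$. Then $\{d\}$ is a common divisor of $\mathcal{S}$ in $\Pfin(M)$, and $S_0 - \{d\} = \{a_{S_0} - d\}$ is a singleton satisfying the ACCP in $\Pfin(M)$ by the previous paragraph. This proves (1). For part (2) the argument is identical except that when $|S| \geq 2$ for some $S \in \mathcal{S}$ we need the chosen common divisor to be \emph{atomic}: take $D = \{0\}$, which is invertible in $\Pfin(M)$ and hence vacuously atomic; and in the all-singleton case we invoke the almost ACCP of $M$ to get an \emph{atomic} common divisor $d$ of $\{a_S : S \in \mathcal{S}\}$ with some $a_{S_0} - d$ satisfying the ACCP, noting that $\{d\}$ is then an atomic element of $\Pfin(M)$ since the singleton submonoid is divisor-closed and isomorphic to $M$, so atoms of $M$ correspond to atoms of $\Pfin(M)$ lying in that submonoid, and atomicity of $d$ in $M$ transfers to atomicity of $\{d\}$ in $\Pfin(M)$.

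The main obstacle I anticipate is making the ``cardinality strictly decreases'' argument fully rigorous for part of an infinite ascending chain: one must be careful that a strictly ascending chain of principal ideals does not correspond to a strictly \emph{decreasing} chain of cardinalities at every step (equal cardinalities are possible, e.g. via $A \subsetneq B$ with the same min by Lemma~\ref{lem: shimon 2}, but then $|A| < |B|$, so in fact that case \emph{does} drop the cardinality), but rather that it cannot have infinitely many strict drops while staying above cardinality $1$, and that once it descends to cardinality $1$ it remains there and the tail is governed by $M$. Pinning down precisely why an infinite strictly ascending chain starting at a set of cardinality $\geq 2$ is impossible — as opposed to merely eventually entering the singleton submonoid — will require invoking Lemma~\ref{lem: shimon 2} and Corollary~\ref{cor:divisibility from min and max} to control both the minimum and the cardinality simultaneously, and this bookkeeping is the one place where care is needed; everything else is a routine transfer through the singleton submonoid.
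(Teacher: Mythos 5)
The central claim on which your argument rests --- that every $S \in \Pfin(M)$ with $|S| \ge 2$ automatically satisfies the ACCP in $\Pfin(M)$ --- is false, and this breaks the proof in every case where the given family is not made up entirely of singletons. The cardinality bookkeeping from Lemma~\ref{lem:size of the sum} only shows that along an ascending chain $(B_n + \Pfin(M))_{n \ge 0}$ the cardinalities $|B_n|$ are nonincreasing, hence eventually constant; from that point on the successive quotients $C_n$ (with $B_{n-1} = B_n + C_n$) are forced to be singletons, but nothing forces those singletons to be invertible, so the chain can keep ascending forever by peeling off non-invertible singletons. Concretely, take $M = \qq_{\ge 0}$ (which satisfies the quasi-ACCP) and set $B_n := \{2^{-n},\, 1 + 2^{-n}\}$. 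Then $B_{n} = B_{n+1} + \{2^{-(n+1)}\}$ for every $n$, so $(B_n + \Pfin(M))_{n \ge 0}$ is a strictly ascending chain of principal ideals starting at $\{1,2\}$, an element of cardinality $2$ that therefore fails the ACCP in $\Pfin(M)$. Consequently you cannot take $D = \{0\}$ when some member of $\mathcal{S}$ has cardinality at least $2$: the element $S - D = S$ need not satisfy the ACCP at all, and controlling the minima via Corollary~\ref{cor:divisibility from min and max} only converts the problem into a divisibility chain in $M$ that, absent any hypothesis tying it to an ACCP element of $M$, need not terminate.

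What is missing is precisely the mechanism the paper uses to handle this case: the common divisor must be produced by applying the quasi-ACCP (resp.\ the almost ACCP) of $M$ to the \emph{union} $S_1 \cup \cdots \cup S_n \subseteq M$, which yields $d$ and $s \in S_j$ such that $s - d$ satisfies the ACCP in $M$; then, along any ascending chain starting at $S_j - \{d\}$, one inductively selects elements $b_n \in B_n$ with $b_{n+1} \mid_M b_n$ and $b_0 = s - d$. The ACCP for $s - d$ in $M$ forces $b_{n-1} - b_n$ to be eventually invertible, and combining this with the stabilization of $|B_n|$ (which identifies the eventual quotients $C_n$ as the singletons $\{b_{n-1} - b_n\}$) is what makes the chain stabilize. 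Your all-singleton case is handled correctly and is consistent with this scheme, but the mixed and non-singleton cases require the union construction together with the tracked chain in $M$; neither appears in your proposal, and the dichotomy you substitute for them does not hold.
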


\begin{proof}
	To make our notation less cumbersome, we write $\ppp$ instead of $\mathcal{P}_\text{fin}(M)$. 
	\smallskip
	
	(1) Assume that $M$ satisfies the quasi-ACCP. In order to argue that $\ppp$ also satisfies the quasi-ACCP, fix a nonempty finite subset $\{S_1, \ldots, S_n\}$ of~$\ppp$. Now set $S := S_1 \cup \cdots \cup S_n$. Because $S$ is a nonempty finite subset of $M$, the fact that $M$ satisfies the quasi-ACCP allows us to pick a common divisor $d \in M$ of $S$ and also an element $s \in S$ such that $s-d$ satisfies the ACCP in~$M$. Take an index $j \in \ldb 1,n \rdb$ such that $s \in S_j$. For each $i \in \ldb 1, n \rdb$, the inclusion $S_i \subseteq S$ ensures that $d$ is a common divisor of $S_i$ in $M$, and so $\{d\} \mid_\ppp S_i$. 
	
	Thus, it suffices to show that $S_j - \{d\}$ satisfies the ACCP in $\ppp$. To do this, take an ascending chain $(B_n + \ppp)_{n \ge 0}$ of principal ideals of $\ppp$ starting at $S_j - \{d\}$ and, as $B_0$ and $S_j - \{d\}$ are associates, we can assume that $B_0 = S_j - \{d\}$. Now set $b_0 := s-d$ and take $b_1 \in B_1$ such that $b_1 \mid_M b_0$, and then note that if $b_0, \dots, b_n$ are elements in $M$ such that $b_i \in B_i$ and $b_i \mid_M b_{i-1}$ for every $i \in \ldb 1,n \rdb$, then the fact that $B_{n+1} \mid_\ppp B_n$ allows us to take $b_{n+1} \in B_{n+1}$ such that $b_{n+1} \mid_M b_n$. Hence we have inductively constructed a chain $(b_n + M)_{n \ge 0}$ of principal ideals of $M$ with $b_0 = s-d$ such that $b_n \in B_n$ for every $n \in \nn_0$. Since the ascending chain $(b_n + M)_{n \ge 0}$ starts at $s-d$, which is an element satisfying the ACCP in $M$, there exists an index $k_1 \in \nn$ such that whenever $n > k_1$ the equality $b_n + M = b_{n-1} + M$ holds, and so $b_{n-1} - b_n \in \uu(M)$. On the other hand, it follows from part~(1) of Lemma~\ref{lem:size of the sum} that, for each $n \in \nn_0$, the divisibility relation $B_{n+1} \mid_\ppp B_n$ implies that $|B_n| \ge |B_{n+1}|$, whence there exists an index $k_2 \in \nn$ such that $|B_n| = |B_{k_2}|$ for every $n \ge k_2$. Now, for each $n \in \nn$, take a subset $C_n$ of $M$ such that $B_{n-1} = B_n + C_n$. Then by part~(2) of Lemma~\ref{lem:size of the sum}, for each $n \in \nn$ with $n > k_2$ the set $C_n$ must be a singleton, and so $C_n = \{b_{n-1} - b_n\}$. As a consequence, for each $n \in \nn$ with $n > \max\{k_1, k_2\}$, we obtain that the element $C_n$ of $\ppp$ is the singleton containing the invertible element $b_{n-1} - b_n$, and so the chain $(B_n + \ppp)_{n \ge 0}$ must stabilize. Thus, we conclude that $S_j - \{d\}$ satisfies the ACCP.
	\smallskip
	
	(2) Suppose now that $M$ satisfies the almost ACCP. As in the previous part, fix a nonempty finite subset $\{S_1, \dots, S_n\}$ of $\ppp$, and use the fact that $M$ satisfies the almost ACCP to find an atomic common divisor $d \in M$ of $S_1 \cup \cdots \cup S_n$ such that $s-d$ satisfies the ACCP in~$M$. If $d$ is an invertible element of $M$, then $\{d\}$ is an invertible element of $\ppp$. Otherwise, we can write $d = a_1 + \dots + a_\ell$ for some $a_1, \dots, a_\ell \in \mathcal{A}(M)$, in which case, $\{a_1\}, \dots, \{a_\ell\} \in \mathcal{A}(M)$ and so $\{d\}$ can be written as a sum of atoms in $\ppp$, namely, $\{d\} = \{a_1\} + \dots + \{a_\ell\}$. Hence $\{d\}$ must be an atomic element in $\ppp$. Finally, proceeding \emph{mutatis mutandis} as we did in part~(1), we can show that $S_j - \{d\}$ satisfies the ACCP in $\ppp$, where $j$ is an index in $\ldb 1,n \rdb$ such that $d \in S_j$. Hence we conclude that the power monoid $\ppp$ also satisfies the almost ACCP.
\end{proof}

\bigskip
\section{Atomicity and Maximal Common Divisors}
\label{sec:atomicity and MCD}

In this first section, we will investigate the ascent of atomicity as well as the ascent of some weaker notions of atomicity from linearly ordered monoids to their corresponding power monoids.

\medskip
\subsection{Existence of Maximal Common Divisors}

Our next goal is to prove that, for any linearly orderable monoid $M$, the power monoid $\Pfin(M)$ is atomic if and only if $M$ is an atomic MCD-monoid. This result not only generalizes but also strengthens~\cite[Theorem~3.2]{GLRRT24}, in the sense that it gives a complete characterization of atomic power monoids of linearly ordered monoids (which are more general than Puiseux monoids). Given that the existence of maximal common divisors (MCDs) is essential for our characterization, let us first establish the following proposition, which essentially states that a linearly orderable monoid is an MCD-monoid if and only if its power monoid is an MCD-monoid.

\begin{prop} \label{prop:power monoid MCD}
For a linearly orderable monoid $M$, the following conditions are equivalent.
    \begin{enumerate}
        \item[(a)] $M$ is an MCD-monoid.
        \smallskip
        
        \item[(b)] $\mathcal{P}_\emph{fin}(M)$ is an MCD-monoid.
        \smallskip
    
        \item[(c)] $\mathcal{P}_\emph{fin}(M)$ is a $k$-MCD-monoid for some $k \in \mathbb{N}_{\ge 2}$.
    \end{enumerate}
\end{prop}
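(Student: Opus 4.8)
The plan is to establish the cyclic chain (a) $\Rightarrow$ (b) $\Rightarrow$ (c) $\Rightarrow$ (a); the implication (b) $\Rightarrow$ (c) is immediate from the definitions. Both nontrivial implications rest on a few elementary facts about $\Pfin(M)$ that follow from Lemmas~\ref{lem:min and max} and~\ref{lem:size of the sum}: a divisor of a singleton is a singleton; $\uu(\Pfin(M)) = \{\{u\} : u \in \uu(M)\}$; translation by a fixed element of $M$ is injective, so singletons are cancellable in $\Pfin(M)$; and for $d \in M$ and $S \in \Pfin(M)$ one has $\{d\} \mid_{\Pfin(M)} S$ if and only if $d \mid_M s$ for every $s \in S$, in which case the quotient $\{s - d : s \in S\}$ is uniquely determined.

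For (c) $\Rightarrow$ (a), fix $k \ge 2$ with $\Pfin(M)$ a $k$-MCD-monoid and a nonempty finite subset $T = \{a_1, \dots, a_n\}$ of $M$. The cases $n = 1$ and $0 \in T$ are trivial (in the second, any unit is an MCD of $T$), so I would assume $n \ge 2$ and $0 \notin T$; I may also assume $M$ is nontrivial, hence infinite, since a finite cancellative torsion-free monoid is trivial. Picking distinct nonzero elements $w_1, \dots, w_{k-2} \in M$, I would apply the $k$-MCD hypothesis to the $k$-element set $\mathcal{S} = \{T, \{a_1\}, \{a_1 + w_1\}, \dots, \{a_1 + w_{k-2}\}\}$ to obtain an MCD $D$ of $\mathcal{S}$. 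Since $D$ divides the singleton $\{a_1\}$, it is a singleton $\{d\}$, and $\{d\} \mid_{\Pfin(M)} T$ forces $d \mid_M a_i$ for all $i$. To see $d$ is maximal, I would take a common divisor $e$ of $\{a_1 - d, \dots, a_n - d\}$ in $M$; then $\{e\}$ divides each of the quotients $\{a_1 - d, \dots, a_n - d\}$, $\{a_1 - d\}$, and $\{(a_1 - d) + w_j\}$ of the members of $\mathcal{S}$ by $D$ (the last because $e \mid_M (a_1 - d)$ upgrades to $e \mid_M (a_1 - d) + w_j$), so maximality of $D$ gives $\{e\} \in \uu(\Pfin(M))$, i.e.\ $e \in \uu(M)$; thus $d$ is an MCD of $T$.

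For (a) $\Rightarrow$ (b), given a nonempty finite subset $\{S_1, \dots, S_n\}$ of $\Pfin(M)$, I would first normalize. Set $m := \mathrm{MCD}_M(S_1 \cup \dots \cup S_n)$, which exists by hypothesis, and write $S_i = \{m\} + S_i'$ with $S_i' = \{s - m : s \in S_i\}$; cancellability of the singleton $\{m\}$ lets one check that $\{m\} + D_0$ is an MCD of $\{S_1, \dots, S_n\}$ whenever $D_0$ is an MCD of $\{S_1', \dots, S_n'\}$, while the common divisors of $\bigcup_i S_i'$ in $M$ are precisely the units of $M$. So it suffices to treat the case where every common divisor of $\bigcup_i S_i$ in $M$ is invertible. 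In that case, by Lemma~\ref{lem:size of the sum}(1) the cardinalities of common divisors of $\{S_1, \dots, S_n\}$ in $\Pfin(M)$ are bounded by $\min_i |S_i|$, so I would fix a common divisor $D^*$ of maximal cardinality and argue it is an MCD: if $D'$ is a common divisor with $D^* \mid D'$, write $D' = D^* + G$; then $|D'| = |D^*|$ is forced, so $|G| = 1$ by Lemma~\ref{lem:size of the sum}(2), and writing $G = \{g\}$ the chain $\{g\} \mid D' \mid S_i$ shows that $g$ is a common divisor of $\bigcup_i S_i$ in $M$, hence $g \in \uu(M)$, so $D' = D^* + \{g\}$ is an associate of $D^*$.

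The hard part will be (a) $\Rightarrow$ (b), for two reasons. First, the maximal-cardinality common divisor need not be an MCD without the normalization step: over $\nn_0$ the pair $\{2,4,6\}$, $\{2,4\}$ has the incomparable common divisors $\{0,2\}$ and $\{2,4\}$, both of cardinality $2$. So dividing out $\mathrm{MCD}_M(\bigcup_i S_i)$ really is needed, and checking that this normalization transports MCDs correctly — using only cancellability of singletons, since $\Pfin(M)$ is very far from cancellative — requires care. Second, because $\Pfin(M)$ is not cancellative, the quotient of a set by a non-singleton divisor need not be unique; I would therefore read "MCD" in $\Pfin(M)$ as maximality in the divisibility preorder (equivalent to the stated definition in the cancellative setting) and confirm that the arguments are insensitive to the choice of quotient. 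With the cardinality bookkeeping above in hand, that part is routine.
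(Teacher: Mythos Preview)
Your proof is correct and takes a genuinely different route from the paper's on both nontrivial implications. For (a) $\Rightarrow$ (b), the paper argues by induction on $\sum_{S \in \mathcal{S}} |S|$: if $\mathcal{S}$ admits a non-singleton common divisor $D$, it writes each $S = D + S'$, applies the inductive hypothesis to the smaller family $\mathcal{S}'$, and declares that $D$ plus an MCD of $\mathcal{S}'$ is an MCD of $\mathcal{S}$; otherwise every common divisor of $\mathcal{S}$ is a singleton and $\{m_0\}$, with $m_0$ an MCD of $\bigcup \mathcal{S}$ in $M$, works directly. Your normalize-then-maximize-cardinality argument avoids induction and is more self-contained, though the two are cousins: your normalization step is exactly the paper's terminal case, and your maximal-cardinality divisor plays the role of the paper's inductive peeling. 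For (c) $\Rightarrow$ (a), the paper asserts without justification that $k$-MCD implies $2$-MCD and then applies the $2$-MCD hypothesis to $\{\{s_0\}, S \setminus \{s_0\}\}$; your padding of $\{T, \{a_1\}\}$ with the singletons $\{a_1 + w_j\}$ is in effect the argument the paper omits, and it sidesteps the reduction to $k = 2$ entirely. Your remarks on reading ``MCD'' via the divisibility preorder in the non-cancellative $\Pfin(M)$ are well taken; the paper is equally informal here (its Case~1 verification that $D + T'$ is an MCD is left as ``immediate''), and both arguments go through under any reasonable interpretation since the crucial quotients are always by singletons, which are cancellable.
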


\begin{proof}
    (a) $\Rightarrow$ (b): Assuming that $M$ is an MCD-monoid, let us show that each nonempty finite subset $\mathcal{S}$ of $\Pfin(M)$ has an MCD by using induction on $\sum_{S \in \mathcal{S}} |S|$. For the base case, note that if, for a nonempty finite subset $\mathcal{S}$ of $\Pfin(M)$, the equality $\sum_{S \in \mathcal{S}} |S| = 1$ holds, then $\mathcal{S} = \br{\br{m}}$ for some $m \in M$, which implies that $\br{m}$ is an MCD of $\mathcal{S}$ in $\Pfin(M)$. Now fix $n \in \nn$ such that every nonempty finite subset $\mathcal{S}$ of $\Pfin(M)$ with $\sum_{S \in \mathcal{S}} |S| \le n$ has an MCD in $\Pfin(M)$. Let $\mathcal{T}$ be a nonempty finite subset of $\Pfin(M)$ with $\sum_{T \in \mathcal{T}} |T| = n+1$, and let us argue that $\mathcal{T}$ has an MCD in $\Pfin(M)$. Consider the following two cases.
    \smallskip
    
    \textsc{Case 1:} $\mathcal{T}$ has a common divisor in $\Pfin(M)$ that is not a singleton. Let $D$ be a common divisor of $\mathcal{T}$ in $\Pfin(M)$ such that $|D| \ge 2$. Then we can write $\mathcal{T} = D + \mathcal{T}'$ for some $\mathcal{T}'$ in $\Pfin(M)$, in which case the inequality $\sum_{T \in \mathcal{T}'} |T| < \sum_{T \in \mathcal{T}} |T|$ holds in light of Lemma~\ref{lem:size of the sum}. Therefore our induction hypothesis ensures the existence of an MCD $T'$ of $\mathcal{T}'$ in $\Pfin(M)$. It immediately follows now that $D + T'$ is an MCD of $\mathcal{T}$ in $\Pfin(M)$.
    \smallskip
    
    \textsc{Case 2:} Each common divisor of $\mathcal{T}$ in $\Pfin(M)$ is a singleton. Because $M$ is an MCD-monoid, the finite nonempty subset $U := \bigcup_{T \in \mathcal{T}} T$ of $M$ must have an MCD, namely, $m_0 \in M$. As $m_0$ is a common divisor of $T$ in $M$ for all $T \in \mathcal{T}$, it follows that $\{m_0\}$ is a common divisor of $\mathcal{T}$. To argue that $\{m_0\}$ is indeed an MCD of $\mathcal{T}$, take a nonempty finite subset $D'$ of $M$ such that $\{m_0\} + D'$ is a common divisor of $\mathcal{T}$. By assumption, $\{m_0\} + D'$ is a singleton, and so $D' = \{d\}$ for some $d \in M$. Now the fact that $m_0$ is an MCD of $U$ ensures that $d \in \uu(M)$. Hence $\{m_0\}$ is an MCD of $\mathcal{T}$ in $\Pfin(M)$.
    \smallskip

    (b) $\Rightarrow$ (c): This is straightforward.
    \smallskip

    (c) $\Rightarrow$ (a): Assume that $\Pfin(M)$ is a $k$-MCD monoid for some $k \in \NN_{\ge 2}$, and so a $2$-MCD-monoid. To show that $M$ is an MCD-monoid, it suffices to fix a finite subset $S$ of $M$ with $|S| \ge  2$ and prove that $S$ has an MCD in $M$. Take $s_0 \in S$, and then set $\mathcal{T} := \br{\br{s_0}, S \setminus \br{s_0}}$, which is a subset of $\Pfin(M)$. Because $\mathcal{T}$ contains a singleton, it follows from Lemma~\ref{lem:size of the sum} that all common divisors of $\mathcal{T}$ in $\Pfin(M)$ are singletons. In addition, as $\Pfin(M)$ is a $2$-MCD-monoid and $|\mathcal{T}| = 2$, we can take $m_0 \in M$ such that $\br{m_0}$ is an MCD of $\mathcal{T}$ in $\Pfin(M)$. This immediately implies that $m_0$ is an MCD of $S$ in $M$.
\end{proof}

For a monoid $M$, we say that an element $S$ of $\Pfin(M)$ is \emph{indecomposable} if whenever we can write $S = U+V$ for some $U, V \in \Pfin(M)$ either $|U| = 1$ or $|V| = 1$. In order to establish the primary result of this section, we need the following lemma.

\begin{lemma} \label{lem:indecomposable elements}
	Let $M$ be a linearly orderable monoid, and let $S$ be a finite subset of $M$ with $|S| \ge 2$. Then the following statements hold.
	\begin{enumerate}
		\item If $\min S + \max S > 0$, then $S \cup \{4 \max S\}$ is indecomposable.
		\smallskip
		
		\item If $\min S + \max S < 0$, then $S \cup \{4 \min S\}$ is indecomposable.
	\end{enumerate}
\end{lemma}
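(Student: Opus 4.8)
The plan is to fix, once and for all, a total order $\preceq$ on $M$ compatible with the operation (which exists because $M$ is linearly orderable), and to prove statement~(1) only. Statement~(2) will then follow by applying~(1) to $M$ equipped with the reverse order $\preceq^{\mathrm{op}}$, which is again compatible with the operation; under $\preceq^{\mathrm{op}}$ the roles of $\min S$ and $\max S$ are interchanged, and ``indecomposable'' is a notion that does not reference the order. So from now on assume $\min S+\max S\succ 0$, write $T:=S\cup\{4\max S\}$, and argue by contradiction: suppose $T=U+V$ with $U,V\in\Pfin(M)$ and $|U|\ge 2$ and $|V|\ge 2$.

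First I would record some elementary order facts. From $\min S\preceq\max S$ together with $\min S+\max S\succ 0$ one gets $\max S\succ 0$ (otherwise $\min S+\max S\preceq 2\max S\preceq 0$); hence, using torsion-freeness, $3\max S\succ 2\max S\succ 0$ and $4\max S\succ\max S\succeq\min S$. Consequently $\min T=\min S$ and $\max T=4\max S$, and—crucially—every element of $T$ is either an element of $S$, in which case it is $\preceq\max S$, or else equal to $4\max S$: there is nothing strictly between $\max S$ and $4\max S$ in $T$.

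The heart of the argument uses Lemma~\ref{lem:min and max}. From $T=U+V$ it yields $\min U+\min V=\min T=\min S$ and $\max U+\max V=\max T=4\max S$. Since $|U|\ge 2$ we have $\min U\prec\max U$, so the element $\min U+\max V$ of $T$ satisfies $\min U+\max V\prec\max U+\max V=4\max S$; by the trichotomy above it therefore lies in $S$, i.e.\ $\min U+\max V\preceq\max S$. Symmetrically, using $|V|\ge 2$, we get $\max U+\min V\preceq\max S$. Adding these two inequalities and regrouping the left-hand side via commutativity and Lemma~\ref{lem:min and max},
\[
    \min S+4\max S=(\min U+\max V)+(\max U+\min V)\preceq 2\max S .
\]
On the other hand $\min S+4\max S=(\min S+\max S)+3\max S\succ 3\max S\succ 2\max S$, using $\min S+\max S\succ 0$ and $\max S\succ 0$. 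This contradiction proves~(1), and hence the lemma.

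I expect no genuine combinatorial obstacle here: the multiplier $4$ is comfortably large enough that the two ``cross sums'' $\min U+\max V$ and $\max U+\min V$ are forced below $4\max S$ and hence into $S$, which is exactly what drives the contradiction. The only things that need care are the order-theoretic bookkeeping—verifying $\max S\succ 0$ under the hypothesis of~(1), confirming via torsion-freeness that $4\max S$ is strictly above every element of $S$, and checking that the cancellation/regrouping steps above are legitimate in the linearly ordered cancellative monoid $M$—and the routine verification that reversing a compatible total order yields a compatible total order, which underlies the reduction of~(2) to~(1).
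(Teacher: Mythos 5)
Your proof is correct. It rests on the same two pillars as the paper's argument---a proof by contradiction from a putative decomposition $T=U+V$ with $|U|,|V|\ge 2$, and Lemma~\ref{lem:min and max} applied to that decomposition---but the bookkeeping is genuinely different and, I think, cleaner. The paper works with the \emph{two largest} elements $u_1,u_2$ of $U$ and $v_1,v_2$ of $V$, identifies the second-largest element $\max S$ of $T$ with one of the cross-sums $u_1+v_2$ or $u_2+v_1$ (forcing a ``without loss of generality'' case split), and then grinds out the contradiction $u_2+v_2+2\max S\preceq 0$ versus $u_2+v_2+2\max S\succ 0$. You instead observe that \emph{both} cross-sums $\min U+\max V$ and $\max U+\min V$ lie in $T$ and are strictly below $\max T=4\max S$, hence land in $S$ and are $\preceq\max S$; summing the two bounds and regrouping via Lemma~\ref{lem:min and max} gives $\min S+4\max S\preceq 2\max S$, contradicting $\min S+4\max S\succ 2\max S$. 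This avoids the second-largest elements and the case split entirely, and the reduction of part~(2) to part~(1) via the reversed order is exactly the symmetry the paper invokes. Two cosmetic points: the inequalities $3\max S\succ 2\max S\succ 0$ follow from compatibility of the order alone, not from torsion-freeness as you say; and it is worth stating explicitly that in a linearly ordered monoid $a\preceq b$ and $c\preceq d$ imply $a+c\preceq b+d$ (two applications of compatibility), since that is the step that legitimizes adding your two inequalities.
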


\begin{proof}
	Take $s_1, \dots, s_n \in M$ with $s_1 < \dots < s_n$ such that $S = \{s_1, \dots, s_n\}$.
	\smallskip
	
	(1) Assume that $\min S + \max S \ge 0$, and set $T := S \cup \{4s_n\}$. From $s_1 + s_n > 0$, we obtain that $s_n > 0$. Assume, towards a contradiction, that we can pick $U$ and $V$ to be non-singletons nonempty finite subsets of $M$ such that $T = U + V$ in $\Pfin(M)$. Now set $u_1 := \max U$ and then set $u_2 := \max (U \setminus \{u_1\})$. Similarly, set $v_1 := \max V$ and then set $v_2 := (V \setminus \{v_1\})$. Therefore we see that $4 s_n = \max T = \max (U+V) = u_1 + v_1$ and
	\[
		s_n = \max (T \setminus \{4s_n\}) = \max \big( (U + V) \setminus \{u_1 + v_1\} \big) \in \{u_1 + v_2, u_2 + v_1\}.
	\]
	Assume, without loss of generality, that $s_n = u_2 + v_1$. Then $(u_1 + v_1) - (u_2 + v_1) = 3s_n$ and, therefore, $u_1 = u_2 + 3s_n$. Similarly, it follows from $s_n \ge u_1 + v_2$ that $(u_1 + v_1) - (u_1 + v_2) \ge 3s_n$, and so $v_2 + 3s_n \le v_1$. Thus, $u_2 + v_2 + 6s_n \le u_1 + v_1 = 4s_n$, which implies that $u_2 + v_2 + 2 s_n \le 0$. On the other hand, we find that
	\[
		u_2 + v_2 + 2s_n \ge \min U + \min V + 2 s_n \ge (s_1 + s_n) + s_n \ge s_n > 0,
	\]
	which contradicts the inequality $u_2 + v_2  + 2s_n  \le 0$. As a consequence, $T$ is an indecomposable element of $\Pfin(M)$. 
	\smallskip
	
	(2) Now assume that $\min S + \max S < 0$. This part is symmetric to part~(1): indeed, after setting $T := S \cup \{4s_1\}$, we can take the elements $u_1$ and $u_2$ (resp., $v_1$ and $v_2$) to be the smallest element and second smallest element of $U$ (resp., $V$), and then we can repeat the argument already given in the proof of the previous part.
\end{proof}

We are in a position to characterize the atomic power monoids of linearly orderable monoids.

\begin{theorem} \label{thm:power monoid atomic}
    For any linearly orderable monoid $M$, the following conditions are equivalent.
    \begin{enumerate}
        \item[(a)] $M$ is an atomic MCD-monoid.
        \smallskip
        
        \item[(b)] $\mathcal{P}_\emph{fin}(M)$ is an atomic monoid.
        \smallskip
        
        \item[(c)] $\mathcal{P}_\emph{fin}(M)$ is an atomic MCD-monoid.
        \smallskip
        
        \item[(d)] $\mathcal{P}_\emph{fin}(M)$ is an atomic $k$-MCD-monoid for some $k\in\mathbb{N}_{\ge 2}$.
    \end{enumerate}
\end{theorem}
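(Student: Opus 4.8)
The plan is to establish the cycle of implications $(a)\Rightarrow(b)\Rightarrow(c)\Rightarrow(d)\Rightarrow(a)$. The implication $(c)\Rightarrow(d)$ is immediate, since every MCD-monoid is a $2$-MCD-monoid. For $(d)\Rightarrow(a)$: if $\Pfin(M)$ is an atomic $k$-MCD-monoid for some $k\in\mathbb{N}_{\ge 2}$, then $M$ is an MCD-monoid by Proposition~\ref{prop:power monoid MCD}, while $M$ is atomic because it is isomorphic to the submonoid $\{S\in\Pfin(M):|S|=1\}$, which is divisor-closed in the atomic monoid $\Pfin(M)$, and any divisor-closed submonoid of an atomic monoid is atomic. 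For $(b)\Rightarrow(c)$: granting for the moment that atomicity of $\Pfin(M)$ forces $M$ to be an MCD-monoid, Proposition~\ref{prop:power monoid MCD} then upgrades this to ``$\Pfin(M)$ is an MCD-monoid,'' and atomicity is already part of $(b)$. Hence the substantive content lies in the implication $(a)\Rightarrow(b)$ and in the assertion that $(b)$ forces $M$ to be an MCD-monoid.

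For $(a)\Rightarrow(b)$, I would argue by induction on $|S|$ that every $S\in\Pfin(M)$ is atomic. If $|S|=1$, say $S=\{m\}$, then atomicity of $M$ makes $m$ either invertible (so $\{m\}\in\mathcal{U}(\Pfin(M))$) or a finite sum of atoms $a_1,\dots,a_n$ of $M$; in the latter case $\{m\}=\{a_1\}+\cdots+\{a_n\}$ is a sum of atoms of $\Pfin(M)$, each $\{a_i\}$ being an atom because Lemma~\ref{lem:size of the sum} forces any factorization of $\{a_i\}$ to consist of singletons. Assume now $|S|\ge 2$. Using that $M$ is an MCD-monoid, choose an MCD $d$ of $S$ in $M$ and write $S=\{d\}+(S-\{d\})$; since $\{d\}$ is atomic by the base case, it suffices to show $S-\{d\}$ is atomic, so we may replace $S$ by $S-\{d\}$ and assume that the only common divisors of $S$ in $M$ are invertible. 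If $S=U+V$ for some non-singletons $U,V$, then $|U|,|V|<|S|$ by Lemma~\ref{lem:size of the sum}, so $U$ and $V$ are atomic by the inductive hypothesis, and hence so is $S$. Otherwise $S$ is indecomposable, and then $S$ is itself an atom: if $S=U+V$ with $U,V$ both non-invertible, indecomposability makes one of them, say $V=\{v\}$, a singleton, whence $v$ is a non-invertible common divisor of $S$ in $M$ --- a contradiction.

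For the assertion that $(b)$ forces $M$ to be an MCD-monoid, fix a finite $S\subseteq M$; the goal is to produce an MCD of $S$ in $M$. The case $|S|=1$ is trivial, so assume $|S|\ge 2$. If $\min S+\max S=0$, then $\min S$ is invertible in $M$, and replacing $S$ by the associate set $\{s-\min S:s\in S\}$ changes $\min S+\max S$ to $2\max S$, which is nonzero because $M$ is torsion-free and $S$ is not a singleton; thus we may assume $\min S+\max S\ne 0$, and by symmetry (using $4\min S$ in place of $4\max S$ when the sum is negative) we may assume $\min S+\max S>0$. Put $T:=S\cup\{4\max S\}$, which is indecomposable by Lemma~\ref{lem:indecomposable elements}, and note $|T|=|S|+1\ge 3$ since $4\max S\succ\max S$. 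Because $\Pfin(M)$ is atomic and $T$ is not invertible, write $T$ as a finite sum of atoms of $\Pfin(M)$; indecomposability of $T$ forces all but one of these atoms to be singletons (otherwise Lemma~\ref{lem:size of the sum} contradicts indecomposability), and since $T$ is not a singleton, exactly one atom, call it $A$, is a non-singleton. Absorbing the singleton factors into a single element $m\in M$, we obtain $T=A+\{m\}$ with $A=T-\{m\}$ an atom of $\Pfin(M)$ and $|A|=|T|\ge 2$. Consequently $m$ is a common divisor of $S$ in $M$, and moreover every common divisor of $A$ in $M$ is invertible: a common divisor $c$ of $A$ gives $A=\{c\}+(A-\{c\})$ with $A-\{c\}$ non-invertible (it has $|A|\ge 2$ elements), so $\{c\}\in\mathcal{U}(\Pfin(M))$.

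It remains to check that $m$ is an MCD of $S$, which is the crux of the whole proof. Let $e\in M$ be a common divisor of $\{s-m:s\in S\}$; since $\max S\in S$, write $\max S-m=e+f$ with $f\in M$, so that $4\max S-m=3\max S+(\max S-m)=e+(3\max S+f)$ shows $e\mid_M 4\max S-m$ (note $4\max S-m\in M$ because $m$ divides every element of $T$). Therefore $e$ divides every element of $A=\{s-m:s\in S\}\cup\{4\max S-m\}$, and by the previous paragraph $e$ must be invertible; hence $m$ is an MCD of $S$ in $M$. The essential use of Lemma~\ref{lem:indecomposable elements} occurs precisely here: its auxiliary element $4\max S$ is automatically divisible by every common divisor of $\{s-m:s\in S\}$, so the atom $T-\{m\}$ detects all such common divisors and forces them to be units, while the coefficient $4$ is irrelevant to this divisibility computation and serves only to guarantee that $T$ is indecomposable. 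I expect this final reduction --- reading off an MCD of $S$ from the atomic factorization of its indecomposable completion --- to be the main obstacle.
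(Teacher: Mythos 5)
Your proof is correct and follows essentially the same route as the paper's: atomicity of $M$ via the divisor-closed singleton submonoid, Proposition~\ref{prop:power monoid MCD} to transfer the MCD property between $M$ and $\Pfin(M)$, and---for the crux---the indecomposable completion $T = S \cup \{4\max S\}$ of Lemma~\ref{lem:indecomposable elements}, whose atomic factorization has exactly one non-singleton atom from which the MCD of $S$ is read off. The only differences are organizational: you write out the induction for (a)$\Rightarrow$(b) that the paper outsources to \cite{GLRRT24} \emph{mutatis mutandis}, and you verify directly in $M$ that $m$ is an MCD of $S$ (via the observation that a common divisor of $\max S - m$ automatically divides $4\max S - m$), whereas the paper routes the same computation through an intermediate claim about the pair $\{S,\{t\}\}$ in $\Pfin(M)$.
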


\begin{proof}
    Let $M$ be a linearly orderable monoid, and set $\mathcal{M} := \{ \br{m} : m \in M \}$, which is a divisor-closed submonoid of $\Pfin(M)$.
    \smallskip
    
    (a) $\Rightarrow$ (c): The proof that $\Pfin(M)$ is an atomic monoid follows the line of the proof of \cite[Theorem~3.2]{GLRRT24} \emph{mutatis mutandis}. Then, it follows from Proposition~\ref{prop:power monoid MCD} that $\Pfin(M)$ is an atomic MCD-monoid.
    \smallskip

    (c) $\Rightarrow$ (d): This is straightforward.
    \smallskip

    (d) $\Rightarrow$ (b): This is also straightforward.
    \smallskip

    (b) $\Rightarrow$ (a): Because $\mathcal{M}$ is a divisor-closed submonoid of $\Pfin(M)$, from the fact that $\Pfin(M)$ is atomic one obtains that $\mathcal{M}$ is atomic. As a consequence, $M$ is also atomic as it is naturally isomorphic to $\mathcal{M}$. 
    \smallskip

    To prove that $M$ is an MCD-monoid, we fix a nonempty finite subset $S$ of $M$ and argue that~$S$ has an MCD in~$M$. As every singleton has an MCD in $M$, we can assume that $|S| \ge 2$. On the other hand, after replacing $S$ by a suitable subset, we can further assume that $s+t \neq 0$ for all $s,t \in S$. Take $s_1, \dots, s_n \in M$ with $s_1 < \dots < s_n$ such that $S = \{s_1, \dots, s_n\}$. 
%
    Now set $T := S \cup \br{t}$, where $t := 4s_n$ if $s_1 + s_n > 0$ and $t := 4 s_1$ if $s_1 + s_n < 0$. 
    
    Since $\Pfin(M)$ is an atomic monoid, we can write $T = A_1 + \dots + A_\ell$ for some atoms $A_1, \dots, A_\ell$ of $\Pfin(M)$. In light of Lemma~\ref{lem:indecomposable elements}, we can assume that $A_i$ is a singleton if and only if $i \in \ldb 1, \ell - 1 \rdb$ (at least one of $A_1, \dots, A_\ell$ is not a singleton because $T$ is not a singleton). Write $A_i = \br{a_i}$ for every $i \in \ldb 1,\ell - 1 \rdb$. Thus, the fact that $\{a_1\}, \dots, \{a_{\ell-1}\}$ are atoms of $\Pfin(M)$ implies that they are also atoms of the divisor-closed submonoid $\mathcal{M}$ of $\Pfin(M)$, whence the natural isomorphism between $\mathcal{M}$ and $M$ ensures that $a_1, \dots, a_{\ell-1} \in \mathcal{A}(M)$. Set $A := \{a_1 + \dots + a_{\ell - 1}\}$ and $\mathcal{T} := \{S, \{t\}\}$, and let us argue the following claim.
    \smallskip
    
    \noindent \textsc{Claim.} $A$ is an MCD of $\mathcal{T}$ in $\Pfin(M)$.
    \smallskip
    
    \noindent \textsc{Proof of Claim.} Since $A + A_\ell = T = \{s_1, \dots, s_n \} \cup \{t\}$ and $A$ is a singleton, we see that~$A$ divides both $S = \{s_1, \dots, s_n\}$ and $\{t\}$ in $\Pfin(M)$, so $A$ is a common divisor of $\mathcal{T}$. Now suppose that $A + D$ divides both $S$ and $\{t\}$ in $\Pfin(M)$ for some nonempty finite subset $D$ of $M$. Then~$D$ must be a singleton because it divides the singleton $\{t\}$ in $\Pfin(M)$. Because $A+D$ is a singleton and also a common divisor of $S$ and $\{t\}$ in $\Pfin(M)$, it follows that $A+D$ divides $T$ in $\Pfin(M)$. Now take a nonempty finite subset $D'$ of $M$ such that $(A + D) + D' = T = \{a_1 + \dots + a_{\ell - 1}\} + A_\ell$. This implies that $D+D' = A_\ell$ (every singleton is a cancellative element in $\Pfin(M)$). Now the fact that $A_\ell$ is an atom of $\Pfin(M)$ guarantees that either $D$ or $D'$ is invertible in $\Pfin(M)$. Since $D'$ is not a singleton (because $A_\ell$ is not a singleton), it follows that $D$ is invertible in $\Pfin(M)$. Hence we conclude that $A$ is an MCD of $\mathcal{T}$ in $\Pfin(M)$, and the claim is established.
    \smallskip
    
    It follows from the established claim that $s := a_1 + \dots + a_{\ell-1}$ is a common divisor of $S$ in $M$. To argue that $s$ is an MCD of $S$ in $M$, take $d \in M$ such that $s+d$ is a common divisor of $S$ in $M$. Then $A + \{d\}$ divides both $S$ and $\{t\}$ in $\Pfin(M)$, and so the established claim implies that $\{d\}$ is invertible in $\Pfin(M)$. Therefore $d \in \uu(M)$, and so $s$ is an MCD of $S$ in~$M$. We can now conclude that $M$ is an atomic MCD-monoid, thereby completing our proof.
\end{proof}

\medskip
\subsection{Non-Ascent of Atomicity} 

Next we construct an atomic rank-$1$ torsion-free monoid~$M$ whose power monoid $\Pfin(M)$ is not atomic, confirming the result first given in~\cite[Section~3]{GLRRT24} that the property of being atomic does not ascend to power monoids. First, we argue the following lemma.

\begin{lemma} \label{lem:coefficients of an atom with unique negative p-adic valuation}
	Let $M$ be a Puiseux monoid generated by a set $S$, and let $(p,a)$ be the only pair in $\pp \times S$ such that $p \mid \mathsf{d}(s)$. Then the following statements hold.
	\begin{enumerate}
		\item $a \in \mathcal{A}(M)$.
		\smallskip
		
		\item For each $q \in M$, the following set is a singleton:
		\begin{equation} \label{eq:a singleton set}
			\big\{c + p\zz : q = c a + r \text{ for some } c \in \nn_0 \text{ and } r \in \big\langle S \setminus \{a\} \big\rangle \big\}.
		\end{equation}
	\end{enumerate}
\end{lemma}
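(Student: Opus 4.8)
The plan is to convert the combinatorial hypothesis on the pair $(p,a)$ into concrete arithmetic information about the generators, and then settle both parts by elementary positivity and congruence bookkeeping inside $\qq$. First I would record the structure forced by the hypothesis: since $(p,a)$ is the only element of $\pp\times S$ whose first coordinate divides the denominator of its second coordinate, the only prime dividing $\mathsf{d}(a)$ is $p$, so $\mathsf{d}(a)=p^k$ for some $k\in\nn$ (with $\gcd(\mathsf{n}(a),p)=1$), whereas $\mathsf{d}(s)=1$ for every $s\in S\setminus\{a\}$, so that $\langle S\setminus\{a\}\rangle\subseteq\nn_0$. Moreover, writing an arbitrary $q\in M=\langle S\rangle$ as an $\nn_0$-linear combination of elements of $S$ and isolating the coefficient of $a$ shows that $q$ always admits an expression $q=ca+r$ with $c\in\nn_0$ and $r\in\langle S\setminus\{a\}\rangle$; this already proves that the set in~\eqref{eq:a singleton set} is nonempty.

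For part~(1), I would suppose $a=x+y$ with $x,y\in M$ and fix expressions $x=ca+r$ and $y=c'a+r'$ of the above form. Since $M\subseteq\qq_{\ge 0}$ and $x+y=a$, both $x$ and $y$ lie in the rational interval $[0,a]$, and since $r,r'\ge 0$ and $a>0$ this forces $c\le 1$ and $c'\le 1$. Substituting into $a=x+y$ yields the identity $a=(c+c')a+(r+r')$ in $\qq$, in which the case $c+c'=0$ is impossible (it would give $a=r+r'\in\nn_0$, contradicting $\mathsf{d}(a)=p^k>1$) and the case $c+c'=2$ is impossible (it would give $r+r'=-a<0$). Hence $c+c'=1$, so $r+r'=0$, so $r=r'=0$, and therefore $\{x,y\}=\{0,a\}$. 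Since $M$ is a Puiseux monoid it is reduced, so $a$, being nonzero, is non-invertible; and as one of $x,y$ equals the invertible element $0$, we conclude $a\in\mathcal{A}(M)$.

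For the remaining half of part~(2), I would take two expressions $q=c_1a+r_1=c_2a+r_2$ with $c_1,c_2\in\nn_0$ and $r_1,r_2\in\langle S\setminus\{a\}\rangle\subseteq\nn_0$, and subtract to obtain $(c_1-c_2)a=r_2-r_1\in\zz$. Writing $a=\mathsf{n}(a)/p^k$ in lowest terms, this says $p^k\mid(c_1-c_2)\,\mathsf{n}(a)$, and since $\gcd(\mathsf{n}(a),p)=1$ we conclude $p^k\mid c_1-c_2$, hence in particular $c_1\equiv c_2\pmod p$, i.e.\ $c_1+p\zz=c_2+p\zz$; combined with the nonemptiness noted above, this shows that the set in~\eqref{eq:a singleton set} is a singleton. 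I do not anticipate a genuine obstacle here: the only delicate point is that the decomposition $q=ca+r$ need not be unique, so one must check that the bound $c\le 1$ in part~(1) holds for \emph{every} such decomposition (which is exactly what the positivity argument delivers) and that the residue of $c$ modulo $p$ in part~(2) is decomposition-independent (which is exactly the congruence computation, and uses $\gcd(\mathsf{n}(a),p)=1$ in an essential way).
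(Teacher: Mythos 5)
Your proof establishes the lemma only under a reading of the hypothesis that is strictly stronger than the one the paper intends and actually uses, and under the intended hypothesis two of your steps fail. You interpret ``$(p,a)$ is the only pair in $\pp\times S$ such that $p\mid\mathsf{d}(s)$'' as saying that the set $\{(p',s)\in\pp\times S : p'\mid\mathsf{d}(s)\}$ is the singleton $\{(p,a)\}$, which forces $\mathsf{d}(a)=p^k$ and $S\setminus\{a\}\subseteq\nn_0$. The intended hypothesis is only that, for the fixed prime $p$, the element $a$ is the unique generator whose denominator is divisible by $p$; other primes may divide $\mathsf{d}(a)$ and the denominators of the remaining generators arbitrarily. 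This is forced by how the lemma is applied: in Example~\ref{ex:non-ascent of atomicity} it is invoked for generators such as $\frac{1}{2^n p_{2n+2}}$, whose denominators involve several primes and which share the prime $2$ among them, so under your reading the hypothesis would simply fail there; moreover, the notation paragraph following the lemma explicitly contemplates the case where $p$ is not unique given $a$. Under the intended hypothesis, the two places where you use integrality break down: $\langle S\setminus\{a\}\rangle$ need not lie in $\nn_0$, so in part~(1) the case $c+c'=0$ cannot be refuted by ``$a=r+r'\in\nn_0$''; and in part~(2) the difference $r_2-r_1$ need not be an integer, so the divisibility $p^k\mid(c_1-c_2)\mathsf{n}(a)$ is not available.

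The repair is exactly the paper's argument: replace ``integer'' by ``rational with nonnegative $p$-adic valuation.'' The hypothesis gives $v_p(a)\le -1$, while $v_p(r)\ge 0$ for every $r\in\gp\big(\langle S\setminus\{a\}\rangle\big)$. For part~(1) this immediately rules out $a\in\langle S\setminus\{a\}\rangle$ (and your positivity bookkeeping, which pins down $c+c'=1$ and $r=r'=0$, survives unchanged and in fact finishes the atom argument a bit more completely than the paper's one-line version). For part~(2), from $(c_1-c_2)a=r_2-r_1$ one gets $v_p(c_1-c_2)=v_p(r_2-r_1)-v_p(a)\ge 1$, hence $p\mid c_1-c_2$. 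So the overall architecture of your proof is sound, but as written it proves a special case in which the lemma could not be applied to the examples it is designed for; the valuation-theoretic substitute for your integrality steps is the missing ingredient.
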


\begin{proof}
	(1) This follows immediately as if $a \in \langle S \setminus \{a\} \rangle$, then we could take $c_1, \dots, c_\ell \in \nn$ and $s_1, \dots, s_\ell \in S \setminus \{a\}$ such that $a = c_1 s_1 + \dots + c_\ell s_\ell$, which is not possible because $v_p(a) \le -1$ while $v_p(c_1 s_1 + \dots + c_\ell s_\ell) \ge 0$.
	\smallskip
	
	(2) Set $N := \langle S \setminus \{a\} \rangle$, and observe that $v_p(r) \ge 0$ for every $r \in \gp(N)$. Now fix $q \in M$, and let $\mathcal{C}_q$ be the set described in~\eqref{eq:a singleton set}. Let $c_1 + p\zz$ and $c_2 + p \zz$ be two elements of $\mathcal{C}_q$, and then write $q = c_1 a + r_1 = c_2 a + r_2$ for some $r_1, r_2 \in N$. Since $(c_1 - c_2)a = r_2 - r_1 \in \gp(N)$, we see that
	\[
		v_p(c_1 - c_2) \ge 1 + v_p((c_1 - c_2)a) = 1 + v_p(r_2 - r_1) \ge 1,
	\]
	which means that $p \mid c_1 - c_2$ or, equivalently, $c_1 + p \zz = c_2 + p \zz$. As a consequence, we can conclude that $\mathcal{C}_q$ is a singleton for each $q \in M$.
\end{proof}

Based on part~(2) of Lemma~\ref{lem:coefficients of an atom with unique negative p-adic valuation}, we introduce the following notation.
\smallskip

\noindent \textbf{Notation.} Let the notation be as in Lemma~\ref{lem:coefficients of an atom with unique negative p-adic valuation}. For each $q \in M$, we let $c_{a,p}(q)$ denote the unique element of $\mathbb{Z}/p\mathbb{Z}$ contained in the singleton~\eqref{eq:a singleton set}, and we set $c_a(q)$ instead of $c_{a,p}(q)$ when $p$ is unique given $a$. This satisfies some useful properties: for instance, one can check that
\[
    c_{a,p}(q+r) = c_{a,p}(q) + c_{a,p}(r) \quad \text{for all} \quad q,r \in M.
\]
Also note that $c_{a,p}(q)$ is the same for any $S$ we choose where $a$ satisfies the desired condition. Thus, as long as $S$ exists, the notation is well defined.
\smallskip

We proceed to produce an example of an atomic Puiseux monoid that is not $2$-MCD. The following example, which is motivated by \cite[Example~3.3]{GLRRT24}, not only improves the choice of the generating set of the latter, but will also help us resolve the question of whether near atomicity ascends to power monoids in the class of linearly orderable monoids.

\begin{example} \label{ex:non-ascent of atomicity} (cf.~\cite[Example~3.3]{GLRRT24})
	Let $p_n$ be the $n^\text{th}$ prime in $\mathbb{P}_{\ge 5}$, and consider the Puiseux monoid $M$ generated by the set $A_1 \cup A_2$, where
	\[
		A_1 := \bigg\{ \frac{1}{2^n p_{2n+2}} : n \in \mathbb{N}_0 \bigg\} \quad \text{ and } \quad A_2 := \bigg\{ \frac1{p_{2n+1}} \bigg( \frac13 + \frac{1}{2^n} \bigg) : n \in \mathbb{N}_0 \bigg\}.
	\]
	By part~(1) of Lemma~\ref{lem:coefficients of an atom with unique negative p-adic valuation}, every element of $A_1 \cup A_2$ is an atom of $M$ and, therefore, $\mathcal{A}(M) = A_1 \cup A_2$. Hence $M$ is an atomic monoid. We will argue that $M$ is not a $2$-MCD monoid by showing that the subset $\big\{ 1, \frac{4}{3} \big\}$ of $M$ does not have an MCD (both $1$ and~$\frac43$ belong to~$M$).

	For $z \in \mathsf{Z}(M)$ and $a \in \mathcal{A}(M)$, we let $z_a$ be the number of copies of $a$ that appear in~$z$, and let $p_a$ be the unique prime in $\pp_{\ge 5}$ dividing $\textsf{d}(a)$. Observe that, for each $r \in M$, if the equality $v_{p_a}(r) = 0$ holds, then for any $z \in \mathsf{Z}(r)$ there exists $d_a \in \nn_0$ such that $z_a = d_a p_a$. 
	
	We claim that $a \nmid_M 1$ for any $a \in A_2$. Suppose, by way of contradiction, that $a \mid_M 1$ for some $a \in A_2$, and fix $z \in \mathsf{Z}(1)$ with $z_a \ge 1$. The equality $v_{p_a}(1) = 0$ implies that $d_a := \frac{z_a}{p_a} \in \nn_0$, and so that $d_a \ge 1$. Now the fact that $1 \ge z_a a = d_a \big( \frac13 + \frac1{2^n} \big)$ for some $n \in \nn_0$ ensures that $d_a \in \{1,2\}$. Thus, one can deduce from $v_3(1) = 0$ that (as an atom of $A_2$ appears in $z$) there must exist pairwise distinct atoms $a_1, a_2, a_3 \in A_2$ such that at least two of them appear in the factorization $z$ and $d_{a_1} + d_{a_2} + d_{a_3} = 3k$ for some $k \in \nn$. As a consequence,
	\[
		\sum_{i=1}^3 z_{a_i} a_i \ge \sum_{i=1}^3d_{a_i}\bigg( \frac13 + \frac1{2^{n_i}}\bigg) > 1
	\]
	for some $n_1, n_2, n_3 \in \nn_0$, which contradicts that $1 \ge \sum_{i=1}^3 z_{a_i} a_i$ (as $\sum_{i=1}^3 z_{a_i} a_i$ is a factorization of a divisor of $1$ in~$M$). Therefore $a \nmid_M 1$ for any $a \in A_2$, as claimed.

	Let us prove now that $\big\{1, \frac43 \big\}$ has no MCD in $M$. Fix a common divisor $q \in M$ of $\big\{ 1, \frac43\big\}$, and let us find a positive common divisor of $\big\{1-q, \frac43 - q \big\}$ in~$M$. Since $q \mid_M 1$, it follows from the claim proved in the previous paragraph that $a \nmid_M q$ for any $a \in A_2$. Let $z$ be a factorization of $\frac43 - q$. Since $v_3\big( \frac43 -q \big) = -1$, the inclusion $v_3(A_1) \subseteq \nn_0$ implies that at least one atom from $A_2$ appears in~$z$. As the sum of any four atoms of $A_2$ is larger than $\frac43$, at most three pairwise distinct atoms of $A_2$ can appear in $z$. Let $\{a_1, a_2, a_3\}$ be a $3$-subset of $A_2$ containing the atoms of $A_2$ that appear in $z$. Then $v_3\big( \frac43 - (z_{a_1}a_1 + z_{a_2}a_2 + z_{a_3}a_3) \big) \ge 0$, and so
	\[
		v_3\bigg( \frac43 -  \frac{d_{a_1} + d_{a_2} + d_{a_3}}3 \bigg) = v_3\bigg( \frac43 - \sum_{i=1}^3d_{a_i} \bigg( \frac13 + \frac1{2^{n_i}} \bigg)\bigg) = v_3\bigg( \frac43 - \sum_{i=1}^3 z_{a_i} a_i \bigg) \ge 0
	\]
	for some $n_1, n_2, n_3 \in \nn_0$. Thus, $d_{a_1} + d_{a_2} + d_{a_3} \in 1 + 3\nn_0$, and so $d_{a_1} + d_{a_2} + d_{a_3} = 1$ because the inequality $d_{a_1} + d_{a_2} + d_{a_3}  \ge 4$ implies that $z_{a_1}a_1 + z_{a_2}a_2 + z_{a_3}a_3 > \frac43$. Therefore there is a unique index $n \in \nn_0$ such that the atom $a_n :=  \frac1{p_{2n+1}} \big( \frac13 + \frac{1}{2^n} \big) \in A_2$ appears in~$z$, and it appears exactly $p_{2n+1}$ times. Then we can take $r \in \langle A_1 \rangle$ such that $\frac43 - q = p_{2n+1} a_n + r = \big(\frac13 + \frac1{2^n} \big) + r$, which implies that
	\[
		1-q = \frac{1}{2^{n+1}} + \bigg( \frac1{2^{n+1}} + r \bigg) \quad \text{ and } \quad \frac43 - q =  \frac1{2^{n+1}} + \bigg( \frac13 + \frac1{2^{n+1}} \bigg) + r.
	\]
	Hence $\frac1{2^{n+1}}$ is a positive common divisor of the set $\big\{1 - q, \frac43 - q \big\}$ in $M$, and so we conclude that $\big\{1 - q, \frac43 - q \big\}$ does not have an MCD in $M$. Thus,~$M$ is not an MCD-monoid.
\end{example}

The following remark is an immediate consequence of Theorem~\ref{thm:power monoid atomic}.

\begin{remark}
    Although the monoid constructed in Example~\ref{ex:non-ascent of atomicity} is atomic, its power monoid is not atomic.
\end{remark}

\section{Notions Weaker than Atomicity}
\label{sec:notions weaker than atomicity}
In this section, we study three natural generalizations of atomicity recently studied in the literature: near atomicity, almost atomicity, and quasi-atomicity.

\medskip
\subsection{Near Atomicity}


Motivated by the construction provided in Example~\ref{ex:non-ascent of atomicity}, in this section we will produce an atomic Puiseux monoid whose power monoid is not even nearly atomic, giving a negative answer to the question of whether near atomicity ascends from linearly orderable monoids to their corresponding power monoids.

\begin{lemma} \label{lem: leo 4}
	Let $M$ be a Puiseux monoid, and let $T$ be an element of $\mathcal{P}_\emph{fin}(M)$. Suppose that any divisor $S$ of $T$ in $\mathcal{P}_\emph{fin}(M)$ is such that for any singleton $\{x\}$ dividing $S$ in $\mathcal{P}_\emph{fin}(M)$ there exists a singleton $\{y\} \notin \{\{0\}, S\}$ dividing $S - \{x\}$ in $\mathcal{P}_\emph{fin}(M)$. Then $T$ is not divisible by any atoms.
\end{lemma}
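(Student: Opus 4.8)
The plan is to prove the statement by contradiction, exploiting the simple but crucial observation that an atom dividing $T$ is, in particular, a \emph{divisor} of $T$, so the hypothesis of the lemma is available with that atom playing the role of $S$ and with $\{x\} = \{0\}$. Feeding the purported atom divisor back into the hypothesis will produce a forbidden two-factor decomposition of it.

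Before the main argument I would dispose of two routine bookkeeping points. First, $\Pfin(M)$ is reduced, i.e.\ $\mathcal{U}(\Pfin(M)) = \{\{0\}\}$: if $S + S' = \{0\}$ for some $S,S' \in \Pfin(M)$, then $\max S + \max S' = \max\{0\} = 0$ by Lemma~\ref{lem:min and max}, and since $M$ is a Puiseux monoid both maxima are nonnegative, forcing $S = S' = \{0\}$. Hence any element of $\Pfin(M)$ other than $\{0\}$ is non-invertible. Second, since $M$ is cancellative, singletons are cancellative elements of $\Pfin(M)$, so the notation $S - \{x\}$ occurring in the statement is unambiguous and, in particular, $S - \{0\} = S$.

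The core step is then the following. Suppose, for a contradiction, that some atom $A$ of $\Pfin(M)$ satisfies $A \mid_{\Pfin(M)} T$. Then $A$ is a divisor of $T$, and $\{0\}$ is a singleton dividing $A$ (because $A = \{0\} + A$). Applying the hypothesis with $S := A$ and $\{x\} := \{0\}$ yields a singleton $\{y\} \notin \{\{0\}, A\}$ with $\{y\} \mid_{\Pfin(M)} A - \{0\} = A$; write $A = \{y\} + D$ for some $D \in \Pfin(M)$. Since $\{y\} \neq A$ we must have $D \neq \{0\}$, and $\{y\} \neq \{0\}$ holds by construction, so by the reducedness of $\Pfin(M)$ both $\{y\}$ and $D$ are non-invertible. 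Thus $A = \{y\} + D$ expresses the atom $A$ as a sum of two non-invertible elements, which is the desired contradiction; hence $T$ is divisible by no atom of $\Pfin(M)$.

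I do not anticipate a genuine obstacle: essentially all of the work was done in arranging the hypothesis so that it applies to $A$ itself with $\{x\} = \{0\}$. The only points requiring a sentence of care are the triviality of $\mathcal{U}(\Pfin(M))$ and the identity $A - \{0\} = A$, both handled above; if one preferred to avoid invoking reducedness, the non-invertibility of $\{y\}$ could be read off directly from $y > 0$, and that of $D$ from $D \neq \{0\}$ together with Lemma~\ref{lem:min and max}.
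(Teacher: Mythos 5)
Your proposal is correct and follows essentially the same route as the paper's proof: both feed the purported atom $A$ back into the hypothesis as the divisor $S$ of $T$ with $\{x\}=\{0\}$, obtain a singleton $\{y\}\notin\{\{0\},A\}$ dividing $A$, and conclude that $A=\{y\}+D$ is a sum of two non-invertible elements, contradicting atomicity. Your extra remarks on the reducedness of $\mathcal{P}_{\mathrm{fin}}(M)$ and the cancellativity of singletons only make explicit what the paper leaves implicit.
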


\begin{proof}
    Suppose, by way of contradiction, that $S$ is an atom of $\mathcal{P}_\text{fin}(M)$ that divides $T$. Then $\{0\} \mid_{\mathcal{P}_\text{fin}(M)} S$, so there is some $\{x\} \mid_{\mathcal{P}_\text{fin}(M)} S - \{0\} = S$ where $\{x\}\notin \{\{0\}, S\}$. Since $M$ is reduced, $\{x\}$ and $S - \{x\}$ are not invertible elements, which contradicts that $S$ is an atom.
\end{proof}

Now we present a fact which will become useful to prove the main theorem of this section.

\begin{lemma} \label{lem: leo 5}
	Let $M$ be a Puiseux monoid, and let $p$ be a prime such that there exists a unique atom $a$ in $M$ whose denominator is divisible by $p$. If the map $c_{a,p}$ is constant on $S$ for some $S \in  \mathcal{P}_\emph{fin}(M)$, then $c_{a,p}$ is constant on any divisor $D$ of~$S$ in $\mathcal{P}_\emph{fin}(M)$.
\end{lemma}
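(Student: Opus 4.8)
The plan is to exploit the additivity property of $c_{a,p}$ recorded in the Notation following Lemma~\ref{lem:coefficients of an atom with unique negative p-adic valuation}. Suppose $D \mid_{\mathcal{P}_\text{fin}(M)} S$, so that $S = D + E$ for some $E \in \mathcal{P}_\text{fin}(M)$. Write $c := c_{a,p}$ for brevity. Every element $s \in S$ has the form $s = d + e$ with $d \in D$ and $e \in E$, so $c(s) = c(d) + c(e)$ by additivity. The hypothesis says $c$ is constant on $S$, say $c \equiv \gamma$ on $S$. Thus $c(d) + c(e) = \gamma$ for every pair $(d,e) \in D \times E$ that arises, and in particular for every $d \in D$ (paired with at least one $e \in E$) and every $e \in E$ (paired with at least one $d \in D$).

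The key step is then a rigidity argument: fix $d_0 \in D$ and let $e_0 \in E$ be such that $d_0 + e_0 \in S$; then for any other $d \in D$, picking $e \in E$ with $d + e \in S$, we have $c(d) + c(e) = \gamma = c(d_0) + c(e_0)$, but also $c(d) + c(e_0)$ and $c(d_0) + c(e)$ need not a priori equal $\gamma$ — so I need to be slightly more careful. The clean way: take any $d, d' \in D$; choose $e \in E$ with $d + e \in S$ and $e' \in E$ with $d' + e' \in S$. Then $c(d) + c(e) = c(d') + c(e') = \gamma$. This alone does not force $c(d) = c(d')$. So instead I would argue via the full sumset structure: $D + E = S$ means *every* element $d + e$ with $d \in D$, $e \in E$ lies in $S$, hence $c(d) + c(e) = \gamma$ for *all* $(d,e) \in D \times E$. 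Now fix one $e_\ast \in E$; then $c(d) = \gamma - c(e_\ast)$ for all $d \in D$, which shows $c$ is constant on $D$ (and, symmetrically, constant on $E$). This is the heart of the argument and uses crucially that the sumset $D+E$ contains all pairwise sums, not merely enough of them.

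The main obstacle — really the only subtlety — is the observation just made: one must use that $S = D + E$ is a genuine sumset (so all of $D \times E$ contributes), rather than merely that $D$ divides $S$ in some weaker sense; this is what upgrades "$c(d)+c(e)=\gamma$ for the pairs realizing elements of $S$" to "$c(d)+c(e)=\gamma$ for all $(d,e) \in D\times E$," from which constancy on $D$ is immediate. I would also note at the outset that $a$ being the unique atom with $p \mid \mathsf{d}(a)$ places us exactly in the hypotheses of Lemma~\ref{lem:coefficients of an atom with unique negative p-adic valuation} (taking $S$ there to be any generating set of $M$ containing $a$ as its sole element with $p$-divisible denominator, which exists since $a$ is an atom), so $c_{a,p}$ and its additivity are well-defined on all of $M$. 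With that in place the proof is a three-line computation.
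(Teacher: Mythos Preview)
Your proposal is correct and takes essentially the same approach as the paper: the paper also writes $S = C + D$, fixes an element $t \in C$, and uses additivity to get $c_{a,p}(r) + c_{a,p}(t) = c_{a,p}(s) + c_{a,p}(t)$ for all $r,s \in D$, whence $c_{a,p}(r) = c_{a,p}(s)$. Your eventual argument (after the detour) is the same computation, just phrased via the constant value $\gamma$ rather than by direct cancellation.
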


\begin{proof}
	Let $S$ be an element of $\mathcal{P}_\text{fin}(M)$ such that the map $c_{p,a}$ is constant on $S$, and let $D$ be a divisor of $S$ in $\mathcal{P}_\text{fin}(M)$. Write $S = C+D$ for some $C \in  \mathcal{P}_\text{fin}(M)$. Then for any $r,s \in D$ and $t \in C$, we see that
	\[
		c_{a,p}(r) + c_{a,p}(t) = c_{a,p}(r+t) = c_{a,p}(s+t) = c_{a,p}(s) + c_{a,p}(t).
	\]
	Hence $c_{a,p}(r) = c_{a,p}(s)$, and so we conclude that $c_{a,p}$ is also constant on $D$.
\end{proof}

We proceed to prove that near atomicity does not ascend from linearly orderable monoids to their corresponding power monoids. This is the primary result of this section.

\begin{theorem}
	There exists an atomic linearly orderable monoid whose power monoid is not even nearly atomic.
\end{theorem}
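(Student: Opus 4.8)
The plan is to build a concrete atomic Puiseux monoid $M$ by modifying the construction in Example~\ref{ex:non-ascent of atomicity}, and to exhibit a single element $T \in \Pfin(M)$ — something like $T = \big\{1, \tfrac43\big\}$ or a small variant of it — which, together with all of its translates $T + \{x\}$ (equivalently, all multiples $T + S$ in $\Pfin(M)$), fails to be atomic; in fact I would aim to show that no translate of $T$ is even divisible by an atom of $\Pfin(M)$. Since near atomicity of $\Pfin(M)$ would require a fixed $C \in \Pfin(M)$ with $T + C$ atomic, it suffices to rule out atomicity for every element of $T + \Pfin(M)$, and for that in turn it is enough (by the argument sketched below) to rule out that any such element is divisible by an atom. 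The key technical tool is Lemma~\ref{lem: leo 4}: it reduces ``not divisible by any atom'' to the purely combinatorial statement that every divisor $S$ of $T + C$ in $\Pfin(M)$ has the property that whenever a singleton $\{x\}$ divides $S$, there is another singleton $\{y\} \notin \{\{0\}, S\}$ dividing $S - \{x\}$.

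The heart of the argument is then to verify that hypothesis. First I would use Corollary~\ref{cor:divisibility from min and max} (and its $\max$ analogue via Lemma~\ref{lem:min and max}) to control $\min$ and $\max$ of any divisor $S$ of $T + C$: this pins down the ``endpoints'' of $S$ in terms of factorizations in $M$ itself. Then, having chosen the generators of $M$ so that the denominator-$3$ obstruction from Example~\ref{ex:non-ascent of atomicity} still forces a forbidden-atom analysis, I would argue that any singleton $\{x\}$ dividing $S$ must in fact be a ``small'' common divisor (built from the $A_1$-type atoms $\tfrac1{2^n p}$), and that one can always peel off yet another such small atom — concretely, if $\tfrac1{2^{n+1}}$ divides everything in sight, so does $\tfrac1{2^{n+2}}$, after suitably enlarging the generating set so these halvings never terminate. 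This is exactly where Lemma~\ref{lem: leo 5} enters: it lets me transfer the constancy of the invariant $c_{a,p}$ from $S$ down to arbitrary divisors, so that the ``no atom from $A_2$ divides'' phenomenon propagates through all divisors of $T + C$, not just through $T$ itself. Combining these, every divisor $S$ of $T+C$ admits the required second singleton $\{y\}$, Lemma~\ref{lem: leo 4} applies, and no translate of $T$ is divisible by an atom.

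To close the loop I still need: (i) that $M$ is genuinely atomic — this follows from part~(1) of Lemma~\ref{lem:coefficients of an atom with unique negative p-adic valuation} applied to each generator, exactly as in Example~\ref{ex:non-ascent of atomicity}, once the generating set is arranged so each relevant prime $p$ divides the denominator of a unique generator; (ii) that $M$ is linearly orderable — automatic, since every Puiseux monoid is (it is a submonoid of $\qq$, hence cancellative and torsion-free, so Theorem~\ref{thm:Levi's consequence} applies); and (iii) the reduction from ``$\Pfin(M)$ nearly atomic'' to ``some $T+C$ atomic'' to ``some $T+C$ divisible by an atom'': an atomic non-unit is by definition a sum of atoms, hence in particular divisible by an atom, and $\Pfin(M)$ is reduced because $M$ is, so $T+C$ is a non-unit; thus if $T+C$ were atomic it would be divisible by an atom of $\Pfin(M)$, contradicting the above. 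Note that actually the cleanest route is to show directly that $T + C$ is never divisible by an atom for every $C$, which immediately kills atomicity of $T+C$ and hence near atomicity of $\Pfin(M)$.

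The main obstacle I anticipate is step (ii)–(iii) of the middle paragraph: controlling \emph{all} divisors $S$ of an arbitrary translate $T + C$, not just divisors of $T$. In Example~\ref{ex:non-ascent of atomicity} one only had to analyze divisors of the fixed set $\big\{1,\tfrac43\big\}$; here $C$ ranges over all of $\Pfin(M)$, so the denominators and the supports of the elements of $T+C$ are unbounded, and the ``forbidden atom'' bookkeeping (which atoms of $A_2$ can appear, and with what multiplicity) becomes genuinely more delicate. I expect the resolution is to choose the generating sets $A_1, A_2$ so that the $2$-adic tower in $A_1$ is infinite and ``self-similar'' — so that after subtracting any finite combination one can still extract a strictly smaller $A_1$-atom as a common divisor — while the denominator-$3$ numerators in $A_2$ are arranged exactly as in Example~\ref{ex:non-ascent of atomicity} so that the ``sum of four $A_2$-atoms exceeds $\tfrac43$'' pigeonhole still forces at most a bounded, well-understood $A_2$-contribution to any divisor of $T + C$. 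Making this uniform in $C$, and then checking that the extracted small atom $\{y\}$ genuinely satisfies $\{y\} \notin \{\{0\}, S\}$ (i.e.\ that $S - \{x\}$ is not a singleton nor zero), is the crux of the proof.
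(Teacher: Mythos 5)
Your overall logical reduction is sound: exhibiting a single $T$ with $T+C$ non-atomic for \emph{every} $C \in \Pfin(M)$ would indeed refute near atomicity. Note, though, that this reverses the quantifiers of the paper's argument, which fixes the candidate translator $S$ first and only then builds $T = \{t_j, t_{j+1}\}$ with the index $j$ chosen large relative to $S$. The route you then commit to --- showing that no translate $T+C$ is divisible by any atom of $\Pfin(M)$, via Lemma~\ref{lem: leo 4} --- cannot work. If $a \in \mathcal{A}(M)$ (and $M$ is atomic and nontrivial, so such $a$ exist and $\{a\}$ is an atom of $\Pfin(M)$), then $T + \{a\}$ is divisible by the atom $\{a\}$; more generally, any $C$ admitting an atomic non-unit common divisor contaminates $T+C$ with atom divisors. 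Correspondingly, the hypothesis of Lemma~\ref{lem: leo 4} fails for $T+\{a\}$: its divisor $S=\{a\}$ admits the singleton divisor $\{x\}=\{a\}$, and $S - \{x\} = \{0\}$ has no admissible $\{y\}$. ``Not divisible by an atom'' does imply ``not atomic'' for non-units, but you need to establish non-atomicity for translates that \emph{are} divisible by atoms, and the paper's argument is structurally different at exactly this point: it shows that in every decomposition $S + T = S_1 + \cdots + S_\ell$, some summand $S'$ inherits the obstruction and splits as a sum of two non-units, so no decomposition can consist entirely of atoms.

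The second gap is the uniformity in $C$ that you flag yourself as the crux. The pigeonhole from Example~\ref{ex:non-ascent of atomicity} (``the sum of four atoms of $A_2$ exceeds $\frac43$'') depends on the elements of $T$ being bounded by $\frac43$; once an arbitrary $C$ is added, the elements of $T+C$ are unbounded and may carry arbitrarily many $A_2$-atoms in their factorizations (take $C$ to be a large multiple of an $A_2$-atom), so the bookkeeping does not merely become ``more delicate'' --- it collapses. This is precisely why the paper's construction uses infinitely many families $A_k$ with $n_k, d_k \to \infty$: given the translate candidate $S$, one chooses $j$ with $d_j > \max \mathsf{d}(S)$ and $n_j > \max S$ so that the atoms of $A_j$ and $A_{j+1}$ cannot occur in any factorization of any element of $S$, and the obstruction carried by $T = \{b_{j,1}, b_{j+1,1}\}$ survives the translation. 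A fixed $T$ built from a fixed pair of families cannot be placed ``beyond the reach'' of every $C$ simultaneously. Items (i) and (ii) of your closing paragraph (atomicity of $M$ via Lemma~\ref{lem:coefficients of an atom with unique negative p-adic valuation}, and linear orderability of Puiseux monoids) are fine, but the core of the proof as proposed does not go through.
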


\begin{proof}
    Let $N$, $D$, $Q$, and the terms of the sequence $(P_k)_{k \ge 1}$ be pairwise disjoint infinite subsets of $\pp_{\ge 3}$. Also, let the sequences $(n_i)_{i \ge 1}$, $(d_i)_{i \ge 1}$, and $(q_i)_{i \ge 1}$ be strictly increasing enumerations of the sets $N$, $D$, and $Q$, respectively, and for each $k \in \nn$, let $(p_{k,i})_{i \ge 1}$ be a strictly increasing enumeration of $P_k$. Now let $M$ be the Puiseux monoid generated by the set $\bigcup_{k \in \nn_0} A_k$, where
    \[
        A_0 := \bigg\{ \frac1{2^i q_i} : i \in \nn\bigg\} \quad \text{ and } \quad A_k := \bigg\{ \frac1{p_{k,i}} \bigg( \frac{n_k}{d_k} + \frac1{2^i}\bigg) : i \in \nn \bigg\} 
    \]
    for every $k \in \nn$. Now set $a_{0,i} := \frac1{2^i q_i}$ and $a_{k,i} := \frac1{p_{k,i}} \big( \frac{n_k}{d_k} + \frac1{2^i}\big) \in A_k$ for all $(k,i) \in \nn \times \nn$. After replacing, for each $k \in \nn$, the sequence $(p_{k,i})_{i \ge 1}$ by a suitable subsequence, one can assume that $p_{k,i} >2^i n_k + d_k$ for every $i \in \nn$. Hence the $p_{k,i}$-valuation of $a_{k,i}$ is negative for all $(k,i) \in \nn \times \nn$. Thus, it follows from part~(1) of Lemma~\ref{lem:coefficients of an atom with unique negative p-adic valuation} that $\mathcal{A}(M) = \bigcup_{k \in \nn_0} A_k$ and, therefore, $M$ is atomic. For all $r \in M$ and $i \in \nn$, we set
    \[
    	c_{a_{0,i}}(r) := c_{a_0, q_i}(r) \quad \text{ and } \quad c_{a_{k,i}}(r) := c_{a_{k,i}, p_{k,i}}(r).
    \]
    Now set $b_{k,i} := \frac{n_k}{d_k} + \frac1{2^i}$ for all $(k,i) \in \nn \times \nn$, and observe that $N := \big\langle b_{k,i} : (k,i) \in \nn \times \nn \big\rangle$ is a submonoid of $M$. In addition, it follows from part~(1) of Lemma~\ref{lem:coefficients of an atom with unique negative p-adic valuation} that $N$ is atomic with $\mathcal{A}(N) = \{ b_{k,i} : (k,i) \in \nn \times \nn \}$.

    We will prove that $\ppp := \mathcal{P}_\text{fin}(M)$ is not nearly atomic. Fix $S \in \ppp$ and let us find $T \in \ppp$ such that $S+T$ is not atomic. Because $S$ is a finite set, we can take $j \in \nn$ large enough so that $p_{k,1} > \max \mathsf{d}(S)$ for all $k \ge j$, while $d_j > \max \mathsf{d}(S)$ and $n_j > \max S$. Now set $T := \{t_j, t_{j+1} \}$, where $t_j := b_{j,1}$ and $t_{j+1} := b_{j+1,1}$. 
%
%
%
	Fix $(s,t) \in S \times T$ and $c \in \{0,1\}$, and then write 
    \begin{equation} \label{eq:s aux equality}
    	s + c t= r_j + \sum_{k \ge j} \sum_{i \in \nn} c_{k,i} a_{k,i},
    \end{equation}
    where $r_j \in M$ and $\{ c_{k,i} : (k,i) \in \nn_{\ge j} \times \nn \} \subseteq \nn_0$ (with $c_{k,i} = 0$ for almost all $(k,i) \in \nn_{\ge j} \times \nn$) such that $d_k \nmid \mathsf{d}(r_j) \ge 0$ and $p_{k,i} \nmid \mathsf{d}(r_j) \ge 0$ for all $(k,i) \in \nn_{\ge j} \times \nn$. For any $(k,i) \in \nn_{\ge j} \times \nn$, we see that $p_{k,i} \ge p_{k,1} > \max \mathsf{d}(S)$ and so $p_{k,i} \nmid \mathsf{d}(s)$. Thus, for each $(k,i) \in \nn_{\ge j} \times \nn$, we obtain that $p_{k,i} \nmid \mathsf{d}(s + ct)$, and so $p_{k,i} \mid c_{k,i}$. 
    Therefore 
    \begin{equation} \label{eq:s aux equality II}
    	s + ct = r_j + \sum_{k \ge j} \sum_{i \in \nn} \frac{c_{k,i}}{p_{k,i}} \Big( \frac{n_k}{d_k} + \frac1{2^i} \Big) =  r_j + r'_j + \sum_{k \ge j} c_k \frac{n_k}{d_k},
    \end{equation}
    where $r'_j$ is a nonnegative dyadic rational and $c_k := \sum_{i \in \nn} \frac{c_{k,i}}{p_{k,i}} \in \nn_0$. For each index $k \ge j$, the fact that $d_k \ge d_j > \max \mathsf{d}(S)$ implies that $d_k \nmid \mathsf{d}(s)$. Thus, if $c=0$, then for each index $k \ge j$ we obtain that $\frac{c_k}{d_k} \in \nn_0$ and so the fact that $n_k \ge n_j  > \max S$ (along with~\eqref{eq:s aux equality II}) implies that $c_k = 0$, whence $c_{k,\ell} = 0$ for every $\ell \in \nn$. Thus, any factorization of $s$ in $M$ contains no copy of the atom $a_{k,\ell}$ for any pair $(k,\ell) \in \nn_{\ge j} \times \nn$. 
    Proceeding similarly, we can argue that if $c=1$, then $a_{k,\ell} \nmid s+t$ for each pair $(k,\ell) \in \nn_{\ge j+2} \times \nn$, from which we can deduce that any factorization $z$ of $s+t_j$ (resp., $s+t_{j+1}$) contains no copy of the atom $a_{k,\ell}$ for any pair $(k,\ell) \in \nn_{\ge j+2} \times \nn$ and also that we can pick an index $i \in \nn$ such that $z$ has either $0$ or $p_{j,i}$ (resp., $p_{j+1,i}$) copies of the atom $a_{j,i}$ (resp., $a_{j+1,i}$). 
    Now, for each $s \in S$, define
    \[
        c_{a_{j,i} p_{j,i}}(s) = c_{d_j, a_{j,i} p_{j,i}} \bigg( s - \sum_{m=1}^n c_{a_{j,m}}(s)a_{j,m} \bigg),
    \]
    where $v_{a_{j,m}}(s) = 0$ whenever $m>n$ (a finite $n$ exists as the element $s$ is atomic). Note that this is equal for large enough $i \in \nn$. Before proceeding, we need to establish the following claim.
    \smallskip

    \noindent \textsc{Claim.} Whenever we write $S+T$ as a finite sum of elements of $\ppp$, there is a summand~$S'$ satisfying the following property: for any $s \in S'$, there is another element $t \in S'$ such that one of the following inequalities holds for any sufficiently large index $i \in \nn$:
    \begin{equation} \label{eq:two alternative inequalities}
        c_{a_{j,i} p_{j,i}}(s) > c_{a_{j,i} p_{j,i}}(t) \quad \text{ or } \quad c_{a_{{j+1},i} p_{{j+1},i}}(s) > c_{a_{{j+1},i} p_{{j+1},i}}(t).
    \end{equation}

    \noindent \textsc{Proof of Claim.} First, observe that~$T$ satisfies the property in our claim because $c_{a_{j,i} p_{j,i}}(t_k) = c_{a_{{j+1},i} p_{{j+1},i}}(t_{j+1}) = 1$ and $c_{a_{{j+1},i} p_{{j+1},i}}(t_j) = c_{a_{j,i} p_{j,i}}(t_{j+1}) = 0$. Observe that $S$ does not have any factors from $A_j$ or $A_{j+1}$, and so $S+T$ also satisfies the desired property. Now notice that from bounding, these values are actually the exact multiplicities for every factorization. This means that for any $B, C \in \ppp$ such that $B+C \mid_\ppp S+T$, if $a,b \in B$ and $c \in C$, then Lemma~\ref{lem: leo 5} (along with the previous observation) ensures that the term $\sum_{m=1}^n c_{a_{j,m}}(s)a_{j,m}$ does not depend on $a$ and $b$ and, as a consequence,
    \[
        c_{a_{j,i} p_{j,i}}(a+c) - c_{a_{j,i} p_{j,i}}(b+c) = c_{a_{j,i} p_{j,i}}(a) - c_{a_{j,i} p_{j,i}}(b).
    \]
    Write $S+T = S_1 + \dots + S_\ell$ for some elements $S_1, \dots, S_\ell \in \ppp$, and then take an index~$m$ such that $c_{a_{j,m} p_{j,m}}(s) = 0$ for any $s \in S_1 \cup \cdots \cup S_\ell$. If for each summand $S_i$ there exists an element $s_i \in S_i$ for which no corresponding $t$ exists (as in the property of our claim), then $s_1 + \dots + s_\ell$ is an element in $S+T$ with no corresponding $t$, which is not possible because $S+T$ satisfies the desired property. This is because, for each $i \in \ldb 1, \ell \rdb$, the coefficients $v_{a_{j,m} p_{j,m}}(s_i)$ and $c_{a_{{j+1},m} p_{{j+1},m}}(s_i)$ are minimum in $S_i$, and so the corresponding coefficients for $s_1 + \dots + s_\ell$ are minimum in $S_1 + \dots + S_\ell$. Hence the claim is established.
	\smallskip
	
	Now let the summand be $S'$ and take any $s \in S'$. Then the established claim guarantees the existence of $t \in S'$ such that for any sufficiently large $i \in \nn$ one of the inequalities in~\eqref{eq:two alternative inequalities} holds. Assume, without loss of generality, that for any factorization, $c_{a_{j,i} p_{j,i}}(s) > c_{a_{j,i} p_{j,i}}(t)$ holds for any large enough $i \in \nn$. This, together with Lemma~\ref{lem:coefficients of an atom with unique negative p-adic valuation}, implies that the set of atoms from $A_j$ in any factorization of $s$ and $t$ is the empty set or a set of $p_{j,i}$ copies of the atom $a_{j,i}$ for some index~$i$. 
 
    Thus, if $d$ is a common divisor of $\{s,t\}$ in $M$, then the set of atoms from $A_j$ in any factorization of $d$ is the empty set; this means that, for each index $i \in \nn$, there exist $p_{j,i}$ copies of the atom $a_{j,i}$ in any factorization of $s-d$, whence $\frac{n_j}{d_j}+\frac{1}{2^i} \mid_M s$ or $\frac{1}{2^{i+1}} \mid_M s$. Therefore, for any $\{d\} \mid_\ppp S'$, there exists $k \in \nn$ large enough such that $\{\frac{1}{2^k}\} \mid_\ppp S' -\{d\}$. As a consequence, it follows from Lemma~\ref{lem: leo 5} that $S+T$ is not an atomic element of $\ppp$. Hence we can conclude that the power monoid $\ppp$ is not nearly atomic.
\end{proof}

\medskip
\subsection{Almost Atomicity and Quasi-Atomicity}

Our primary purpose for the rest of this section is to construct a rank-$2$ linearly orderable monoid that is almost atomic but its power monoid is not even quasi-atomic. With this construction, we provide a negative answer to the ascent of both almost and quasi-atomicity to power monoids.

Consider the Puiseux monoid $Q := \big\langle \frac{1}{p} : p \in \pp \setminus \{2\} \big\rangle$. It is not hard to verify that $Q$ is atomic with $\mathcal{A}(Q) = \big\{\frac1p : p \in \pp \setminus \{2\} \big\}$. Therefore, for each $q \in Q$, we can write
\begin{equation} \label{eq:canonical sum in Q}
	q = \ell(q) + \sum_{p \in \pp \setminus \{2\}} c_p(q) \frac1p,
\end{equation}
where $\ell(q), c_p(q) \in \nn_0$ for every $p \in \pp \setminus \{2\}$. After assuming that we have taken $\ell(q)$ in~\eqref{eq:canonical sum in Q} as large as it can possibly be, we obtain that for each $p \in \pp \setminus \{2\}$ the coefficient $c_p(q)$ is the smallest number of copies of the atom $\frac1p$ that can appear in a factorization of~$q$, which means that $c_p(q) \in \ldb 0, p-1 \rdb$ is the smallest nonnegative integer in $c_{p,1/p}(q)$.  For any $p \in \pp \setminus \{2\}$ and $q \in \text{gp}(Q)$, set $v_{1/p}(q) := \frac{c}p$, where $c$ is the unique element in $\ldb 0, p-1 \rdb$ such that $v_p\big(q - \frac{c}p \big) \ge 0$, and so $v_{1/p}(q) = c_p(q)$ when $q \in Q$. With this in mind, one can extend \eqref{eq:canonical sum in Q} to any $q \in \gp(Q)$ as follows:
\begin{equation}  \label{eq:canonical sum in gp(Q)}
	q = \ell(q) + \sum_{p \in \pp \setminus \{2\}} v_{1/p}(q).
\end{equation}
Now consider the set of positive rationals $N := \text{gp}(Q) \cap (2,3)$ and set $k(q) := 2 - \ell(q) \in \zz$ for each $q \in N$. 
The most important object for the rest of this section is the following additive submonoid of $\qq^2$:
\begin{equation} \label{eq:main monoid}
	 M := \big\langle A \cup B \cup D \big\rangle,
\end{equation}
where
\[
	A := \Big\{ \Big(\frac{1}{5}, q+\frac{1}{2^{k(q)}} \Big): q \in N \Big\}, \quad B := \Big\{ \Big( \frac{1}{7}, q+\frac{1}{2^{k(q)}} \Big) : q \in N \Big\}, \text{ and } D := \Big\{ \Big(0,\frac{1}{2^n} \Big) : n \in \nn \Big\}.
\]
Since $M$ is a submonoid of the torsion-free abelian group $\qq^2$, it is linearly orderable: indeed, $M$ is a linearly ordered monoid with respect to the lexicographical order on $\qq^2$ with priority on the second coordinate. Observe that $M$ is a reduced linearly orderable monoid. We proceed to argue that $M$ is an almost atomic monoid with $\mathcal{A}(M) = A\cup B$, where every element that is neither the identity element nor an atom is divisible by an element of~$D$.

\begin{lemma} \label{lem:almost atomic of certain monoid}
	The monoid $M$ in~\eqref{eq:main monoid} satisfies the following two conditions.
	\begin{enumerate}
		\item $M$ is almost atomic and $\mathcal{A}(M) = A \cup B$.
		\smallskip
		
		\item $\mathcal{A}(M) + \mathcal{A}(M) \subseteq D+M$.
	\end{enumerate} 
\end{lemma}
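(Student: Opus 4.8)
The plan is to first pin down $\mathcal{A}(M)$, then prove part~(2), and finally deduce from part~(2) both the $D$-divisibility of non-atoms and almost atomicity. Two elementary observations about $M$ drive everything. \emph{(i)} The first coordinate of any element of $M$ equals $\tfrac{7a+5b}{35}$ for some $a,b\in\nn_0$ (namely $a$ and $b$ count the $A$- and $B$-generators used in a representation), so it is $0$ or at least $\tfrac{5}{35}=\tfrac17$; hence if $x=y+z$ in $M$ with $y,z$ both of positive first coordinate, then the first coordinate of $x$ is at least $\tfrac{10}{35}>\tfrac15$. \emph{(ii)} Every element of $\gp(Q)$ has odd denominator, while any $\nn_0$-combination of second coordinates of $D$-generators is a nonnegative dyadic rational, and a rational that is at once dyadic and of odd denominator is an integer; so any equation in $M$ that forces $q-q'$ to be dyadic, with $q,q'\in N\subseteq(2,3)$, forces $q=q'$.

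\emph{Identifying $\mathcal{A}(M)$.} To get $A\cup B\subseteq\mathcal{A}(M)$, take $g\in A$ and a factorization $g=x+z$; by \emph{(i)} one factor, say $x$, has first coordinate $0$, so $x=(0,d)$ with $d$ a nonnegative dyadic, and then $z$ has first coordinate $\tfrac15$, which forces $z=\big(\tfrac15,\,q'+\tfrac1{2^{k(q')}}+d'\big)$ with $q'\in N$ and $d'$ a nonnegative dyadic; comparing second coordinates and invoking \emph{(ii)} gives $q'=q$, whence $d=d'=0$ and $x=0$, so $g$ is an atom (and symmetrically for $B$). Conversely, given nonzero $a\in M$ and a representation $a=\sum_{i=1}^m g_i+(0,d)$ over the generators ($g_i\in A\cup B$, $d$ a nonnegative dyadic): if $m=0$ then $a=(0,d)$ with $d>0$ is a sum of two non-units $(0,\tfrac1{2^N})+\big(0,d-\tfrac1{2^N}\big)$; and if $d>0$ or $m\ge2$ then $a=g_1+(\text{nonzero remainder})$ is again a sum of two non-units ($M$ being reduced). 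So the atoms are exactly the $m=1$, $d=0$ elements, that is, $A\cup B$.

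\emph{Part~(2).} Fix $g_1,g_2\in\mathcal{A}(M)$ with associated $q_1,q_2\in N$, and put $\sigma:=q_1+q_2\in\gp(Q)\cap(4,6)$. I would produce $q_1',q_2'\in N$ with $q_1'+q_2'=\sigma$ and $\tfrac1{2^{k(q_1')}}+\tfrac1{2^{k(q_2')}}<\tfrac1{2^{k(q_1)}}+\tfrac1{2^{k(q_2)}}$; then, writing $g_i'$ for the atom of the same type as $g_i$ associated with $q_i'$, we get $g_1+g_2=(g_1'+g_2')+(0,\delta)$ with $\delta$ a positive dyadic, and since every $(0,\delta)$ with $\delta>0$ dyadic lies in $D+M$, this gives $g_1+g_2\in D+M$. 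To build $q_1',q_2'$, recall $k(q)=2-\ell(q)$ with $q=\ell(q)+\sum_{p\in\pp\setminus\{2\}}v_{1/p}(q)$: choose a large finite set $F$ of odd primes not dividing $\mathsf{d}(\sigma)$, set $S:=\sum_{p\in F}\tfrac{(p-1)/2}{p}$, and let $q_1':=(2-\lfloor S\rfloor)+\sum_{p\in F}\tfrac{(p-1)/2}{p}\in(2,3)\cap\gp(Q)$; its displayed sum is the canonical $\tfrac1p$-expansion, so $\ell(q_1')=2-\lfloor S\rfloor$ is very negative and $\tfrac1{2^{k(q_1')}}$ is tiny. One then checks that $q_2':=\sigma-q_1'$ has $v_{1/p}(q_2')=\tfrac{(p+1)/2}{p}$ for every $p\in F$, so $\ell(q_2')$ is also very negative and $\tfrac1{2^{k(q_2')}}$ is tiny. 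Finally, by perturbing finitely many of the coefficients $c_p$ (and adjoining more large primes) one lands $q_1'$ in whatever subinterval of $(2,3)$ is needed so that $q_2'=\sigma-q_1'$ lies in $(2,3)$ as well, and then enlarging $F$ makes $\tfrac1{2^{k(q_1')}}+\tfrac1{2^{k(q_2')}}$ smaller than the fixed positive number $\tfrac1{2^{k(q_1)}}+\tfrac1{2^{k(q_2)}}$.

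\emph{$D$-divisibility of non-atoms, almost atomicity, and the main obstacle.} For $x\in M^\bullet\setminus\mathcal{A}(M)$, pick a representation $x=\sum_{i=1}^m g_i+(0,d)$ over the generators: if $d>0$ then $(0,d)$, hence $x$, is divisible by an element of $D$; if $d=0$ then $m\ge2$ and part~(2) gives $g_1+g_2\in D+M$, so $x\in D+M$. This settles the last clause of part~(1). Since the atomic elements of $M$ are exactly the members of $\langle A\cup B\rangle$, for almost atomicity it suffices — given $x=y+(0,\delta)$ with $y$ atomic and $\delta>0$ dyadic — to produce an atomic $c$ with $(0,\delta)+c$ atomic, for then $x+c=y+\big((0,\delta)+c\big)$ is a sum of atomic elements. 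I would show that for every positive dyadic $\delta$ there are atoms $g_1,\dots,g_t$ and $g_1',\dots,g_t'$ (all of type $A$, say) with $g_1+\dots+g_t+(0,\delta)=g_1'+\dots+g_t'$: this amounts to choosing $q_1,\dots,q_t,q_1',\dots,q_t'\in N$ with $\sum_i q_i=\sum_i q_i'$ and $\sum_i\tfrac1{2^{k(q_i')}}=\delta+\sum_i\tfrac1{2^{k(q_i)}}$, and since $\tfrac1{2^{k(q)}}$ runs over all of $\{2^{-j}:j\ge0\}$ as $q$ runs over $N$, for $t$ large enough one first chooses the two multisets of $k$-values so that their dyadic sums differ by exactly $\delta$, and then realizes these $k$-values by elements of $N$ with equal sums on the two sides, exactly as in the construction for part~(2). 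The step I expect to be the real obstacle is precisely this bookkeeping (already present in part~(2)): controlling the integers $k(q)$ — equivalently the integer parts $\ell(q)$ of the $\tfrac1p$-expansions — for several $q\in N$ simultaneously while the sum $\sum q$ is prescribed. The interplay of these constraints is governed by the carry behaviour of the $v_{1/p}$-expansions, and the device that tames it is inserting a large block of primes disjoint from the denominators in play with coefficients $\approx p/2$, which forces a chosen element and its complement (relative to the prescribed sum) to acquire large $k$-values at once.
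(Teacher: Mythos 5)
Your determination of $\mathcal{A}(M)$ is correct and coincides with the paper's argument (the first-coordinate count forces one factor into $\langle D\rangle$, and the dyadic-versus-odd-denominator observation forces $q=q'$). Your Part~(2) construction, while much heavier than necessary, also works: making both $k(q_1')$ and $k(q_2')$ large while fixing $q_1'+q_2'=\sigma$ produces \emph{some} positive dyadic surplus in the second coordinate, and every $(0,\delta)$ with $\delta>0$ dyadic lies in $D+M$. The reduction of almost atomicity to writing each $(0,\delta)$, $\delta>0$ dyadic, as a difference of two atomic elements is likewise the right one.

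The genuine gap is the step you yourself flag: realizing a \emph{prescribed} multiset of $k$-values by elements of $N$ with a \emph{prescribed} sum. Your only device for controlling $k$ is the block of primes with coefficients $\tfrac{p-1}{2}$, and that device only makes $k$-values large; it cannot produce, say, $q_1,q_2,q_1',q_2'\in N$ with $q_1+q_2=q_1'+q_2'$, $k(q_1')=n-1$, $k(q_1)=n$ and $k(q_2)=k(q_2')$, which is what is needed to account for a prescribed increment $2^{-n}$. (For a general dyadic $\delta$, the plan ``first choose the $k$-multisets, then realize them'' necessarily involves small, exactly prescribed $k$-values on each side, and the forced complement $q_t'=\Sigma-\sum_{i<t}q_i'$ has a $k$-value you do not control; this is not ``exactly as in the construction for Part~(2)''.) The missing idea is a single-prime transfer, which is what the paper uses: if $p$ is an odd prime with $v_{1/p}(q)=v_{1/p}(r)=0$ and $q-\tfrac1p,\ r+\tfrac1p\in(2,3)$, then $c_p\big(q-\tfrac1p\big)=p-1$ forces $\ell\big(q-\tfrac1p\big)=\ell(q)-1$, i.e.\ $k\big(q-\tfrac1p\big)=k(q)+1$, while $c_p\big(r+\tfrac1p\big)=1$ gives $k\big(r+\tfrac1p\big)=k(r)$. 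Writing $a_s$ for the $A$-atom attached to $s\in N$, this yields
\[
a_q+a_r \;=\; a_{q-\frac1p}+a_{r+\frac1p}+\Big(0,\tfrac{1}{2^{k(q)+1}}\Big),
\]
which proves Part~(2) in one line and, upon choosing $q$ with $k(q)=n-1$, exhibits $(0,\tfrac1{2^n})$ as a difference of two elements of $A+A$; summing such identities over the dyadic expansion of $\delta$ produces the atomic $c$ with $(0,\delta)+c$ atomic. Replacing your prime-block device by this local transfer supplies exactly the bookkeeping your sketch is missing.
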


\begin{proof}
	(1) As $M$ is reduced, $\mathcal{A}(M) \subseteq A \cup B \cup D$, from which we obtain that $\mathcal{A}(M) \subseteq A \cup B$. To see that each element of $A$ is an atom of $M$, first notice that no element of $B$ can divide any element of $A$ in $M$ because $\frac17$ does not divide $\frac15$ in the additive monoid $\big\langle \frac15, \frac17 \big\rangle$. Now fix $a_0 \in A$, and note that if $a_0$ is divisible by $a_1 \in A$ in $M$, then $a_0 - a_1$ cannot be divisible in $M$ by any element of $A$, which means that any sum decomposition of $a_0$ in $\langle A \cup D \rangle$ has the form $a_0 = a_1 + (d_1 + \dots + d_n)$ for some $d_1, \dots, d_n \in D$. Thus, if $a_0 = \big(\frac15, q_0 + \frac1{2^{k(q_0)}} \big)$ and $a_1 = \big(\frac15, q_1 + \frac1{2^{k(q_1)}} \big)$ for some $q_0, q_1 \in N$, then the second coordinate $q_0 - q_1$ of $a_0 - a_1$ is a dyadic rational, whence $q_0 - q_1 \in \nn_0$ because $v_2(\gp(Q)) \subseteq \nn_0$. This, along with the fact that $q_0, q_1 \in (2,3)$, ensures that $q_1 = q_0$, which implies that $a_1 = a_0$ and so $d_1 = \dots = d_n = 0$. Hence $A \subseteq \mathcal{A}(M)$ and, similarly, we can conclude that $B \subseteq \mathcal{A}(M)$. Thus, $\mathcal{A}(M) = A \cup B$.
	
	We now prove that $M$ is almost atomic. It suffices to show that any element $(0,\frac{1}{2^n}) \in D$ can be written in $\gp(M)$ as the difference of two atomic elements of $M$. To do so, fix $n \in \nn$. Now take $q,r \in N$ such that $k(q) = n-1$, and let $p$ be an odd prime large enough so that $v_{1/p}(q) = v_{1/p}(r) = 0$ and $q - \frac1p, r+\frac1p \in \gp(Q) \cap (2,3)$. One can readily check that $k\big(q-\frac{1}{p}\big) = k(q)+1 = n$ and $k\big(r + \frac{1}{p}\big) = k(r)$. Since $q-\frac1p$ and $r+ \frac1p$ belong to $N$,
	\[
		a_1 := \pr{ \frac{1}{5}, q + \frac{1}{2^{k(q)}}} + \pr{\frac{1}{5}, r + \frac{1}{2^{k(r)}}} \quad \text{and} \quad a_2 := \pr{\frac{1}{5}, q - \frac{1}{p} + \frac{1}{2^{k(q-\frac{1}{p})}}} + \pr{\frac{1}{5}, r + \frac{1}{p} + \frac{1}{2^{k(r + \frac{1}{p})}}}
	\]
	both belong to $A+A$, and so they are atomic elements of $M$. In addition, from the equalities $k\big( q - \frac1p \big) = n$ and $k\big( r + \frac1p \big) = k(r)$, we can deduce that
	\[
		a_1 - a_2 = \pr{0, \frac1{2^{k(q)}} + \frac1{2^{k(r)}} - \frac1{2^{k\pr{ q - \frac1p }}} - \frac1{2^{k\pr{r + \frac1p }}} } = \pr{ 0, \frac1{2^{n-1}} - \frac1{2^n}} =  \pr{ 0, \frac1{2^n}} .
	\]
	Because $a_1$ and $a_2$ are both atomic elements of the monoid $M$, we can conclude that $M$ is almost atomic.
	\smallskip
	
	(2) We proceed to show that any element of $M$ that can be written as the sum of two atoms must be divisible by an element of~$D$. 
	Consider the atoms $a_q = \big( q_0, q + \frac{1}{2^{k(q)}} \big)$ and $a_r = (r_0, r + \frac{1}{2^{k(r)}})$ of~$M$, where $q_0, r_0 \in \big\{\frac15,\frac17\}$ and $q,r \in N$. We have already seen in the previous paragraph that if we take~$p$ to be an odd prime large enough so that $v_{1/p}(q) = v_{1/p}(r) = 0$ and $q - \frac1p, r+\frac1p \in N$, then both equalities $n := k(q) + 1 = k\big(q-\frac{1}{p}\big)$ and $k(r) = k\big(r + \frac{1}{p}\big)$ hold. Thus,
	\[
		a_q + a_r = \pr{q_0, q - \frac{1}{p} + \frac{1}{2^{k(q-\frac{1}{p})}}} + \pr{r_0, r + \frac{1}{p} + \frac{1}{2^{k(r + \frac{1}{p})}}} +  \pr{ 0, \frac1{2^{k(q - \frac1p)}}}.
	\]
	Therefore it follows from the previous equality that $a_q + a_r$ is divisible in $M$ by $\big(0, \frac1{2^n} \big) \in D$ if $n \in \nn$ and by $(0,1) \in D$ if $n \in \zz \setminus \nn$. Hence $\mathcal{A}(M) + \mathcal{A}(M) \subseteq D+M$, as desired.
\end{proof}

We are now in a position to prove that although the rank-$2$ monoid in \eqref{eq:main monoid} is almost atomic, its power monoid is not even quasi-atomic, establishing that neither the property of almost atomicity nor that of quasi-atomicity ascend to power monoids on the class of finite-rank torsion-free monoids.

\begin{theorem}
    There exists an almost atomic submonoid of $\mathbb{Q}^2$ such that its power monoid is not quasi-atomic. 
\end{theorem}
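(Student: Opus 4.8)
The plan is to reduce the theorem to showing that $\Pfin(M)$ is not quasi-atomic, since Lemma~\ref{lem:almost atomic of certain monoid} already supplies a rank-$2$ linearly orderable monoid $M$ (the one in~\eqref{eq:main monoid}) that is almost atomic; thus it suffices to exhibit an $S \in \Pfin(M)$ with $S+T$ non-atomic for every $T \in \Pfin(M)$. Before choosing $S$ I would record three preliminary facts. First, the two coordinate projections induce monoid homomorphisms $\pi_1,\pi_2 \colon \Pfin(M) \to \Pfin(\qq_{\ge 0})$, and by Lemma~\ref{lem:min and max} the maps $\min,\max \colon \Pfin(M) \to M$ (with respect to the lexicographic order prioritizing the second coordinate) are homomorphisms; moreover every $m \in M$ with $\pi_1(m) = 0$ and $m \neq 0$ lies in $\langle D\rangle$ and is non-atomic, while every $m \in M$ with $\pi_1(m) > 0$ uses at least one atom of $A \cup B$, whence $\pi_2(m) > 2$ and in fact $\pi_2(m) > 10\,\pi_1(m)$ (because the number of atoms used is at least $5\,\pi_1(m)$ and each has second coordinate $> 2$). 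In particular, an atomic $c \in M$ with $\pi_1(c)=0$ or with $\pi_2(c)\le 2$ must be $0$. Second, using Lemma~\ref{lem:size of the sum} I would show that an atomic $X \in \Pfin(M)$ can always be written $X = \{c\} + A_1 + \cdots + A_r$, where $\{c\}$ collects all singleton atom factors (so $c \in M$ is atomic, possibly $c = 0$), each $A_i$ is a \emph{non-singleton} atom of $\Pfin(M)$, and $\sum_{i=1}^{r}(|A_i| - 1) \le |X| - 1$. Third, $\langle D\rangle$ is a divisor-closed submonoid of $M$, from which one checks that $\Pfin(\langle D\rangle)$ is a divisor-closed submonoid of $\Pfin(M)$; since $\langle D\rangle$ is antimatter, the only atomic singleton in $\Pfin(\langle D\rangle)$ is $\{0\}$, and every non-singleton atom of $\Pfin(\langle D\rangle)$ has minimum $0$.

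For the witness I would take $S := \{(0,1),(0,2),(0,3)\} \in \Pfin(\langle D\rangle)$ (or a close variant of this ``$\{1,2,3\}$''-type pattern, if the general case forces a slight modification). The easy case is $T \in \Pfin(\langle D\rangle)$: then $S+T \in \Pfin(\langle D\rangle)$, so by divisor-closedness $S+T$ is atomic in $\Pfin(M)$ if and only if it is atomic in $\Pfin(\langle D\rangle)$; but an atomic element of $\Pfin(\langle D\rangle)$ is a product of non-singleton atoms of $\Pfin(\langle D\rangle)$ (the factor $c$ is forced to be $0$), each of minimum $0$, so $\min(S+T) = 0$, contradicting $\min(S+T) = (0,1) + \min T \succeq (0,1) \succ 0$. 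Hence $S+T$ is never atomic when $T \subseteq \langle D\rangle$.

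The remaining case, $T \not\subseteq \langle D\rangle$, is where the work concentrates. Assuming $S+T = \{c\} + A_1 + \cdots + A_r$ is an atom factorization as above, applying $\min$ gives $(0,1) + \min T = c + \sum_i \min A_i$; applying $\pi_1$ yields $\pi_1(c) + \sum_i \pi_1(\min A_i) = \pi_1(\min T)$, and applying $\pi_2$ yields $\pi_2(c) + \sum_i \pi_2(\min A_i) = 1 + \pi_2(\min T)$. If $\pi_1(\min T) = 0$, then $\pi_1(c) = 0$ (so $c = 0$) and $\pi_1(\min A_i) = 0$ for all $i$, and I would then compare $\pi_2(S+T) = \{1,2,3\} + \pi_2(T)$ with $\sum_i \pi_2(A_i)$ to show that the residual dyadic ``$\{1,2,3\}$'' pattern cannot be distributed among the $\pi_2(A_i)$, using that a non-singleton atom with $D$-type minimum cannot contain the needed non-$D$ elements; if $\pi_1(\min T) > 0$, I would apply the inequality $\pi_2(m) > \max\{2,\,10\,\pi_1(m)\}$ to $\min T$, to $c$, and to each $\min A_i$, and combine it with the two displayed coordinate identities to force $c$ to be a nonzero element of $M$ with $\pi_1(c) = 0$ or $\pi_2(c) \le 2$, i.e.\ non-atomic — a contradiction in either case. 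The hard part, which I expect to be the main obstacle, is precisely this last ``no-absorption'' bookkeeping: in $M$ a pair $a_q + a_r$ can be retraded as $a_{q-1/p} + a_{r+1/p} + \bigl(0,\tfrac{1}{2^{k(q)+1}}\bigr)$, so a $D$-element can be hidden inside an atom decomposition, and one must use $\tfrac{1}{2^{k(q)}} \le 1$ (equivalently $k(q)\ge 0$, i.e.\ $\ell(q)\le 2$ for $q\in(2,3)$) to bound how much dyadic mass such retradings can shift, and thereby rule out resolving the imposed ``$\{1,2,3\}$'' pattern. Once the contradiction is obtained in all cases, $\Pfin(M)$ is not quasi-atomic, and since $M$ is almost atomic by Lemma~\ref{lem:almost atomic of certain monoid}, the theorem follows.
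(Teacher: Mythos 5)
Your reduction to exhibiting a single $S$ with $S+T$ non-atomic for all $T$ is the right frame, and your preliminary facts (the homomorphisms $\min$ and $\pi_1,\pi_2$, the bound $\pi_2(m)>10\,\pi_1(m)$, divisor-closedness of $\Pfin(\langle D\rangle)$, the decomposition into a singleton part plus non-singleton atoms) are all correct. But the witness $S=\{(0,1),(0,2),(0,3)\}$ does not work, and the ``no-absorption bookkeeping'' you flag as the main obstacle cannot be carried out, because the retrading you worry about really does absorb a full unit of dyadic mass using only two atoms. Concretely, set $q_1:=1+\tfrac13+\tfrac25+\tfrac47=\tfrac{242}{105}$ and $q_2:=1+\tfrac13+\tfrac35+\tfrac37=\tfrac{248}{105}$; both lie in $N=\gp(Q)\cap(2,3)$ with $\ell(q_i)=1$, hence $k(q_i)=1$, so $a_i:=\bigl(\tfrac15,\,q_i+\tfrac12\bigr)\in A$, and $a_1+a_2=\bigl(\tfrac25,\tfrac{14}{3}+1\bigr)=\bigl(\tfrac25,\tfrac{17}{3}\bigr)$. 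On the other hand $a:=\bigl(\tfrac15,\tfrac{10}{3}\bigr)\in A$ (take $q=\tfrac73$, $k(q)=0$) and $2a=\bigl(\tfrac25,\tfrac{20}{3}\bigr)=a_1+a_2+(0,1)$. Taking $T:=\{a_1+a_2\}$, which is a singleton of $M$, gives
\[
S+T=\Bigl\{\Bigl(\tfrac25,\tfrac{20}{3}\Bigr)\Bigr\}+\{(0,0),(0,1),(0,2)\}
=\{a\}+\{a\}+\{(0,0),(0,1)\}+\{(0,0),(0,1)\},
\]
a sum of four atoms of $\Pfin(M)$. So $S+T$ is atomic and your $S$ is not a witness.

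The failure is structural, not a matter of tweaking the pattern: every finite subset of $\langle D\rangle$ containing $(0,0)$ is atomic in $\Pfin(M)$ (induct on cardinality, using that a set containing $0$ has only $0$ as a common divisor and Lemma~\ref{lem:size of the sum}), so any $S'\in\Pfin(\langle D\rangle)$ factors as $\{\min S'\}+S'_0$ with $S'_0$ atomic; since $M$ is almost atomic there is $c\in M$ with $\min S'+c$ atomic in $M$, and then $S'+\{c\}$ is atomic. Hence no witness can be chosen inside $\Pfin(\langle D\rangle)$ at all — the dyadic coordinate is exactly the direction in which $M$'s almost atomicity repairs everything. The paper's witness $\bigl\{\bigl(\tfrac25,\tfrac{20}{3}\bigr),\bigl(\tfrac37,10\bigr)\bigr\}$ instead exploits the \emph{first} coordinate: its two elements have first coordinates differing by $\tfrac37-\tfrac25=\tfrac1{35}$, a gap that the numerical semigroup $\langle 5,7\rangle$ cannot realize as an ``offset above the minimum'' inside a sum of atoms of $\Pfin(M)$, and this is what makes the divisibility $\{(\tfrac25,\tfrac{20}{3}),(\tfrac37,10)\}\mid_{\Pfin(M)} A_1+\cdots+A_n$ impossible. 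To repair your argument you would need to replace your $S$ by a set whose elements are separated in the $\langle\tfrac15,\tfrac17\rangle$-direction rather than the dyadic one.
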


\begin{proof}
    Let $M$ be the monoid in \eqref{eq:main monoid}. Set $q := 2 + \frac 13$, and notice that $k(q) = 0$ and, therefore, $\big( \frac15, \frac{10}3 \big) = \big( \frac{1}{5}, q + \frac1{2^{k(q)}} \big) \in A$. In a similar way one can see that $\big( \frac17, \frac{10}3 \big) \in B$. It suffices to show that no nonempty finite subset $S$ of $M$ exists so that $S + \big\{ \big( \frac25, \frac{20}3 \big), \big( \frac37, 10 \big) \big\}$ is atomic. For each $v \in M$, let $\pi(v)$ denote the first coordinate of $v$.
    
    Let us assume, towards a contradiction, that there exist atoms $A_1, \dots, A_n$ of $\Pfin(M)$ such that $\big\{ \big( \frac25, \frac{20}3 \big),\big(\frac37, 10 \big) \big\}$ divides $A_1 + \dots + A_n$ in $\Pfin(M)$. For each $i \in \ldb 1,n \rdb$, let $q_i$ be the minimum of the set $A_i$. Because each $A_i$ is an atom, it follows from Lemma~\ref{lem:almost atomic of certain monoid} that $A_i$ contains either $(0,0)$ or an atom of $M$ (as otherwise it would be divisible by $\big\{ \big( 0, \frac{1}{2^k} \big) \big\}$ in $\Pfin(M)$ for some $k \in \nn$). Therefore $\pi(q_i) \in \big\{ 0, \frac{1}{5}, \frac{1}{7} \big\}$. This means that for any $v \in A_i$, either $\pi(v) = \pi(q_i)$ or $\pi(v) - \pi(q_i) \ge \frac{2}{35}$. Thus, for any $v \in A_1 + \dots + A_n$, either $\pi(v) - \pi\big( \sum_{i=1}^n q_i \big) = 0$ or $\pi(v) - \pi\big( \sum_{i=1}^n q_i \big) \ge \frac{2}{35}$. In other words, $\big{|} \pi(v) - \pi\big( \sum_{i=1}^n q_i \big) \big{|} \neq \frac{1}{35} = \frac37 - \frac 25$. However, $\sum_{i=1}^n q_i \in A_1 + \dots + A_n$, which contradicts the fact that $\{(\frac{2}{5}, \frac{20}{3}), (\frac{3}{7}, 10)\}$ divides $A_1 + \dots + A_n$ in $\Pfin(M)$. Hence the monoid $\Pfin(M)$ is not quasi-atomic.
\end{proof}

Unlike the cases of atomicity and near atomicity, we could not find a rank-$1$ torsion-free almost atomic (resp., quasi-atomic) monoid whose power monoid is not almost atomic (resp., quasi-atomic). Aiming to motivate the search for such rank-$1$ torsion-free monoids, we conclude this section with the following open question.

\begin{question}
    Can we construct a rank-$1$ torsion-free almost atomic (resp., quasi-atomic) monoid whose power monoid is not almost atomic (resp., quasi-atomic)?
\end{question}

\bigskip
\section{The Furstenberg Property}
\label{sec:Furstenberg and IDF}

In this final section we turn our attention to the Furstenberg and the IDF properties in the setting of finitary power monoids.

\medskip
\subsection{The Furstenberg Property and Weaker Notions}

Similar to atomicity, there are notions of almost and quasi-Furstenberg, which were introduced and studied in~\cite{nLL19} in the setting of integral domains and were then investigated in~\cite{LRZ23} in the setting of Puiseux monoids. Let $M$ be a monoid. We say that~$M$ is \textit{nearly Furstenberg} if there exists $c \in M$ with the following property: for each non-invertible element $b \in M$, there exists $a \in \mathcal{A}(M)$ such that $a \mid_M b+c$ but $a \nmid_M c$. It follows directly from the definitions that every Furstenberg monoid is nearly Furstenberg. We say that~$M$ is \emph{quasi-Furstenberg} (resp., \emph{almost Furstenberg}) if for each non-invertible element $b \in M$, there exist $a \in \mathcal{A}(M)$ and an element (resp., an atomic element) $c \in M$ such that $a \mid_M b+c$ but $a \nmid_M c$. It turns out that the three generalizations of the Furstenberg property we have just defined ascend from linearly orderable monoids to their corresponding power monoids. Before proving this, we need the following lemma.

\begin{lemma} \label{lem:aux Furstenberg}
    Let $M$ be a linearly orderable monoid. For each non-invertible element $S$ of $\ppp_{\emph{fin}}(M)$, there exists either an atom $A$ of $ \ppp_{\emph{fin}}(M)$ with $|A| \ge 2$ such that $A$ divides $S$ in $\ppp_{\emph{fin}}(M)$ or a non-invertible $d \in M$ such that $\{d\}$ divides $S$ in  $\ppp_{\emph{fin}}(M)$.
\end{lemma}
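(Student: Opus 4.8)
The plan is to induct on the cardinality $|S|$ of the non-invertible element $S \in \Pfin(M)$, producing in each case either an atom of $\Pfin(M)$ of size at least $2$ dividing $S$, or a non-invertible singleton dividing $S$. First I would fix a total order $\preceq$ witnessing that $M$ is linearly orderable and, using that $\Pfin(M)$ is unit-cancellative (by \cite[Proposition~3.5]{FT18}), reduce to the case where $S$ is not invertible, i.e.\ $S$ is not a singleton $\{u\}$ with $u \in \uu(M)$. If $S$ itself is an atom of $\Pfin(M)$, then $|S| \ge 2$ (a non-invertible singleton $\{d\}$ would already be the desired divisor, so we may assume we are not in that trivial subcase), and $S$ divides itself; we are done. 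So assume $S$ is not an atom, and write $S = U + V$ for some non-invertible $U, V \in \Pfin(M)$.

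The key dichotomy is whether $S$ admits a non-singleton proper divisor. \textbf{Case 1:} some divisor $U$ of $S$ with $|U| \ge 2$ is itself an atom of $\Pfin(M)$ — then we are done immediately. \textbf{Case 2:} $S$ has a divisor $U$ with $|U| \ge 2$ that is \emph{not} an atom. By part~(1) of Lemma~\ref{lem:size of the sum}, $|U| \le |S|$, and I claim we may in fact pass to a strictly smaller instance: if $|U| < |S|$, apply the induction hypothesis to $U$ (noting $U$ is non-invertible since $|U| \ge 2$), obtaining either an atom $A$ of $\Pfin(M)$ with $|A| \ge 2$ dividing $U$ — hence dividing $S$ — or a non-invertible $\{d\}$ dividing $U$, hence dividing $S$. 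If instead $|U| = |S|$, then writing $S = U + V$ and invoking part~(2) of Lemma~\ref{lem:size of the sum} forces $V$ to be a singleton (else $|S| = |U + V| > |U|$), so $V = \{d\}$; since $V$ is non-invertible by assumption, $\{d\}$ with $d \notin \uu(M)$ divides $S$, and we are done. \textbf{Case 3:} every proper divisor of $S$ of size $\ge 2$ is an atom but also $S = U + V$ with both $U,V$ non-singletons and (to not fall into Case 1) neither an atom — this case is actually vacuous given the previous analysis, but to be safe one argues: among all factorizations $S = U + V$ with $U,V$ non-invertible, pick one; if both factors have size $\ge 2$, then $|U| < |S|$ by Lemma~\ref{lem:size of the sum}(2) (applied with the role of $V$, $|V| \ge 2$), and we are back in the situation handled above; if some factor is a singleton, that singleton is non-invertible and divides $S$, done.

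The base case is $|S| = 1$: then $S = \{d\}$, and since $S$ is non-invertible, $d \notin \uu(M)$, so $S = \{d\}$ is itself the required non-invertible singleton divisor of itself. I expect the main subtlety to be bookkeeping the descent correctly — specifically, ensuring that whenever we replace $S$ by a divisor we genuinely decrease the cardinality (which is exactly what Lemma~\ref{lem:size of the sum}(2) guarantees: a non-singleton factor strictly inflates the size, so a divisor that is not ``cancelled by a singleton complement'' has strictly smaller size), and handling the edge cases where a complement is a singleton that happens to be invertible. In every branch one lands on either an atom of size $\ge 2$ dividing $S$ or a non-invertible singleton dividing $S$, which completes the induction.
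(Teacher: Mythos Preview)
Your argument is correct and follows essentially the same idea as the paper's proof: both hinge on Lemma~\ref{lem:size of the sum} to guarantee that a non-singleton factor of $S$ has strictly smaller cardinality, so one can descend to a minimal non-singleton divisor which must then be an atom (or else pick up a non-invertible singleton along the way). The paper packages this as a direct minimality argument---assume no non-invertible singleton divides $S$, take a decomposition $S = A + B$ with $|A|$ minimal, and observe that $A$ is forced to be an atom---whereas you run the same descent as an induction on $|S|$; your case analysis is more convoluted than necessary (Case~3 is indeed redundant, and the $|U| = |S|$ subcase of Case~2 is already subsumed by the observation that any non-singleton complement forces $|U| < |S|$), but the logic is sound.
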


\begin{proof}
    Let $S$ be a nonempty finite subset of $M$ that is invertible in $\Pfin(M)$. Assume that $\{d\}$ does not divide $S$ in $\Pfin(M)$ for any non-invertible $d \in M$. If $S$ is an atom of $\Pfin(M)$, then $S$ cannot be a singleton and so we can take $A := S$. Suppose, otherwise, that $S$ is not an atom of $\Pfin(M)$, and write $S = A+B$ for some non-invertible $A$ and $B$ of $\Pfin(M)$. Among all such sum decompositions, suppose that we have chosen one minimizing $|A|$. Our assumption on $S$ ensures that $|A| \ge 2$ and $|B| \ge 2$, whence it follows from Lemma~\ref{lem:size of the sum} that $|S| > |A|$ and $|S| > |B|$. In this case, $A$ must be an atom of $\Pfin(M)$ as otherwise we could write $A = A' + B'$ for some non-invertible elements $A'$ and $B'$ of $\Pfin(M)$ that are both non-singletons in~$M$, and so the fact that $|B'| \ge 2$ would imply that $|A| > |A'|$, contradicting the minimality of $|A|$.
\end{proof}

We are in a position to establish the ascent of all the Furstenberg-like properties introduced earlier from linearly orderable monoids to their corresponding power monoids.

\begin{theorem}
    Let $M$ be a linearly orderable monoid. Then the following statements hold.
    \begin{enumerate}
         \item If $M$ is a Furstenberg monoid, then $\mathcal{P}_{\emph{fin}}(M)$ is a Furstenberg monoid.
         \smallskip

        \item If $M$ is quasi-Furstenberg, then $\mathcal{P}_{\emph{fin}}(M)$ is quasi-Furstenberg.
        \smallskip

        \item If $M$ is almost Furstenberg, then $\mathcal{P}_{\emph{fin}}(M)$ is almost Furstenberg.
        \smallskip
        
        \item If $M$ is nearly Furstenberg, then $\mathcal{P}_{\emph{fin}}(M)$ is nearly Furstenberg.
    \end{enumerate}
\end{theorem}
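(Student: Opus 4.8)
The plan is to handle all four parts by the same template: given a non-invertible element $S$ of $\Pfin(M)$, use Lemma~\ref{lem:aux Furstenberg} to reduce to two cases according to whether $S$ is divisible by an atom of $\Pfin(M)$ that is not a singleton, or by a singleton $\{d\}$ coming from a non-invertible $d \in M$. In the first case the atom $A$ itself witnesses the desired Furstenberg-type property (taking the auxiliary element $c$ to be $\{0\}$, or an atomic such element, as needed), since by construction $A$ is an atom of $\Pfin(M)$, $A \mid_{\Pfin(M)} S$, and $A \nmid_{\Pfin(M)} \{0\}$ because $M$ being a monoid (hence $\{0\}$ invertible in $\Pfin(M)$) forces $|A|=1$ for any divisor $A$ of $\{0\}$; this contradicts $|A|\ge 2$. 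So the real content is the second case, where we must transport the corresponding Furstenberg-type hypothesis on $M$ through the natural isomorphism with the divisor-closed submonoid $\mathcal{M} := \{\{m\} : m \in M\}$ of $\Pfin(M)$.

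For part~(1): if $M$ is Furstenberg, then since $d$ is non-invertible in $M$ there is an atom $a \in \mathcal{A}(M)$ with $a \mid_M d$; then $\{a\}$ is an atom of $\mathcal{M}$, hence (by divisor-closedness) an atom of $\Pfin(M)$, and $\{a\} \mid_{\Pfin(M)} \{d\} \mid_{\Pfin(M)} S$, so $\Pfin(M)$ is Furstenberg. For part~(2) (quasi-Furstenberg), applying the hypothesis to the non-invertible $d \in M$ yields $a \in \mathcal{A}(M)$ and $c \in M$ with $a \mid_M d + c$ but $a \nmid_M c$; the corresponding witness for $S$ in $\Pfin(M)$ is the atom $\{a\}$ together with the auxiliary element $\{c\} + S' $ where $S' \in \Pfin(M)$ satisfies $\{d\} + S' = S$, arranged so that $\{a\}$ divides $S + (\{c\}+S')$ but not $\{c\}+S'$ --- here one must be slightly careful, and the cleaner route is: take auxiliary element $C := \{c\}$; then $\{a\} \mid_{\Pfin(M)} \{d\}+\{c\}$ and $\{d\}\mid_{\Pfin(M)} S$ give $\{a\} \mid_{\Pfin(M)} S + \{c\}$, while $\{a\} \nmid_{\Pfin(M)} \{c\}$ follows from $a \nmid_M c$ via Corollary~\ref{cor:divisibility from min and max} (if $\{a\}$ divided $\{c\}$ in $\Pfin(M)$ then, $\{a\}$ being a singleton, $a \mid_M c$ in $M$). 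Part~(3) is identical except the auxiliary element $c \in M$ is atomic, whence $\{c\}$ is an atomic element of $\Pfin(M)$ (write $c$ as a sum of atoms of $M$ and pass to singletons, exactly as in the proof of the ascent of the almost ACCP).

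For part~(4) (nearly Furstenberg), the witnessing constant requires a little more thought, since it must work uniformly for \emph{all} non-invertible $S \in \Pfin(M)$. Let $c \in M$ be a witnessing constant for the near-Furstenberg property of $M$, so for every non-invertible $b \in M$ there is $a \in \mathcal{A}(M)$ with $a \mid_M b+c$ and $a \nmid_M c$. I claim the singleton $C := \{c\}$ works for $\Pfin(M)$. Given a non-invertible $S \in \Pfin(M)$, apply Lemma~\ref{lem:aux Furstenberg}. If $S$ has an atom $A$ of $\Pfin(M)$ with $|A|\ge 2$ as a divisor, then $A \mid_{\Pfin(M)} S$, hence $A \mid_{\Pfin(M)} S + \{c\}$; and $A \nmid_{\Pfin(M)} \{c\}$ because $\{c\}$ is a singleton so any of its divisors in $\Pfin(M)$ is a singleton by Lemma~\ref{lem:size of the sum}, contradicting $|A|\ge 2$. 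If instead $\{d\} \mid_{\Pfin(M)} S$ for some non-invertible $d \in M$, then $d + c$ is non-invertible in $M$ (in a reduced monoid $0$ is not a sum of a non-invertible element and anything; in general, $d$ non-invertible and $M$ cancellative --- which it is, being linearly orderable --- forces $d+c$ non-invertible), so there is $a \in \mathcal{A}(M)$ with $a \mid_M d+c$ and $a \nmid_M c$; then $\{a\}$ is an atom of $\Pfin(M)$, $\{a\} \mid_{\Pfin(M)} \{d\}+\{c\} \mid_{\Pfin(M)} S + \{c\}$, and $\{a\} \nmid_{\Pfin(M)} \{c\}$ by Corollary~\ref{cor:divisibility from min and max} as before. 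This completes all four parts.

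\textbf{Main obstacle.} The only genuinely delicate point is verifying, in the "singleton divisor" case, that $d$ (resp.\ $d+c$) is truly non-invertible in $M$ so that the Furstenberg-type hypothesis applies --- one should check that $S$ non-invertible in $\Pfin(M)$ together with $\{d\}\mid_{\Pfin(M)} S$ does not force $d$ invertible, which follows since $\{d\}$ invertible in $\Pfin(M)$ would mean $d \in \uu(M)$ and then $\{d\}$ would not be among the divisors singled out by Lemma~\ref{lem:aux Furstenberg}; and that invertibility in $\Pfin(M)$ restricted to singletons matches invertibility in $M$, which is immediate since $\uu(\Pfin(M)) = \{\{u\} : u \in \uu(M)\}$ for $M$ cancellative. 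Everything else is a routine transfer along the divisor-closed embedding $M \cong \mathcal{M} \hookrightarrow \Pfin(M)$, exactly parallel to the arguments already given for the ACCP-type properties.
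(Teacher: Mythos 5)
Your proposal is correct and follows essentially the same route as the paper: reduce via Lemma~\ref{lem:aux Furstenberg} to the two cases (a non-singleton atom divisor of $S$, or a non-invertible singleton divisor $\{d\}$), and transfer the Furstenberg-type hypothesis on $M$ through singletons; the paper's treatment of part~(4) matches yours verbatim in substance, and your spelled-out versions of parts~(2) and~(3), which the paper dismisses as ``similar to part~(1),'' are the correct way to fill in those details. The only cosmetic quibble is that in part~(4) you invoke non-invertibility of $d+c$, whereas the near-Furstenberg hypothesis only needs $d$ itself to be non-invertible, which Lemma~\ref{lem:aux Furstenberg} already provides.
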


\begin{proof}
    Set $\ppp := \mathcal{P}_{\text{fin}}(M)$, and let $S$ be a non-invertible element of $\ppp$.  In light of Lemma~\ref{lem:aux Furstenberg} one of the following two conditions must hold.
    \begin{enumerate}
    	\item[(a)] There exists an atom $A \in \ppp$ with $|A| \ge 2$ such that $A \mid_\ppp S$.
    	\smallskip
    	
    	\item[(b)] There exists a non-invertible element $d \in M$ such that $\{d\} \mid_\ppp S$.
    \end{enumerate}
    \smallskip
    
    (1) Assume that the monoid $M$ is Furstenberg. As $S$ is an arbitrary non-invertible element of $\ppp$, in order to argue that $\ppp$ is a Furstenberg monoid it suffices to argue that $S$ is divisible by an atom in $\ppp$.
	If condition~(a) holds, then we are done because $A$ is an atom of $\ppp$ such that $A \mid_\ppp S$. Therefore suppose that condition~(b) holds. Thus, since $M$ is Furstenberg, we can take $a \in \mathscr{A}(M)$ such that $a \mid_M d$, which implies that $\{a\}$ is an atom of $\ppp$ such that $\{a\} \mid_\ppp S$. In any case, we have found an atom of $\ppp$ that divides $S$. Hence $\ppp$ is a Furstenberg monoid, as desired.
    \smallskip

    (2) This follows similarly to part~(1).
    \smallskip

    (3) This follows similarly to part~(1).
    \smallskip

    (4) Assume now that $M$ is nearly Furstenberg, and let us show that $\ppp$ is also nearly Furstenberg. Take $c \in M$ such that for each non-invertible $b \in M$ the relations $a_b \mid_M b+c$ and $a_b \nmid_M c$ for some $a_b \in \mathcal{A}(M)$. Since $S$ is an arbitrary non-invertible element of~$\ppp$, we only need to argue the existence of an atom~$A \in \ppp$ such that $A \mid_\ppp S + \{c\}$ but $A \nmid_\ppp \{c\}$. If condition~(a) holds, then the inequality $|A| \ge 2$ implies that $A \nmid_\ppp \{c\}$ while the fact that $A \mid_\ppp S$ implies that $A \mid_\ppp S + \{c\}$. Therefore we assume that condition~(b) holds. In this case, we can take $a_d \in \mathcal{A}(M)$ such that $a_d \mid_M d+c$ but $a_d \nmid_M c$. Since $a_d$ is an atom of $M$, we see that $\{a_d\}$ is an atom of $\ppp$. In addition, from the divisibility relations $\{a_d\} \mid_\ppp \{d\} + \{c\}$ and $\{d\} \mid_\ppp S$, we deduce that $\{a_d\} \mid_\ppp S + \{c\}$. Finally, observe that $\{a_d\} \nmid_\ppp \{c\}$ because $a_d \nmid_M c$. Hence we conclude that $\ppp$ is nearly atomic.
\end{proof}

\medskip
\subsection{The TIDF Property}

In this last section, we consider TIDF-monoids, which are a special type of Furstenberg monoids. Recall that a monoid is a TIDF-monoid provided that the set of divisors of each non-invertible element is nonempty and finite (up to associate). We first prove that the TIDF property ascends to power monoids on the class of positive Archimedean monoids. This is a consequence of the fact that for positive Archimedean monoids the TIDF property implies atomicity. We proceed to prove this last statement.

\begin{prop} \label{prop:positive Archimedean TIDF monoids are atomic}
	Let $M$ be a positive Archimedean monoid. If $M$ is a TIDF-monoid, then $M$ is atomic.
\end{prop}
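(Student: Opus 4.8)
The plan is to transport the problem to a submonoid of $\mathbb{R}_{\ge 0}$ and then run the obvious Furstenberg-driven greedy decomposition, using the IDF condition to force termination. First I would invoke H\"older's theorem, recalled in the excerpt: since $M$ is a positive Archimedean monoid, it is order-isomorphic to a submonoid of the nonnegative cone of the additive reals, so we may and do assume $M \subseteq \mathbb{R}_{\ge 0}$. In particular $M$ is reduced, its non-invertible elements are exactly its nonzero elements, being associates coincides with being equal, and every atom of $M$ is a positive real number.

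Next I would fix a nonzero $x \in M$ and build a decomposition by iterating the Furstenberg property. Set $x_0 := x$; having produced $x_{n-1} \in M$, if $x_{n-1} = 0$ then stop, and otherwise use that $M$ is Furstenberg to pick an atom $a_n \in \mathcal{A}(M)$ with $a_n \mid_M x_{n-1}$, say $x_{n-1} = a_n + x_n$ with $x_n \in M$. Telescoping gives, for each $n$ reached, the identity $x = a_1 + \cdots + a_n + x_n$. The key bookkeeping observation is that every atom that appears divides $x$: indeed $x_n \mid_M x_{n-1} \mid_M \cdots \mid_M x_0 = x$, so $a_n \mid_M x_{n-1}$ forces $a_n \mid_M x$.

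Finally I would argue that the process must terminate. Since $M$ is an IDF-monoid, the set $D_x := \{ a \in \mathcal{A}(M) : a \mid_M x \}$ is finite; it is nonempty because $x \neq 0$ is divisible by an atom (Furstenberg), and it consists of positive reals, so $\varepsilon := \min D_x$ is a positive real. Each $a_n$ lies in $D_x$, hence $a_n \ge \varepsilon$, and from $x = a_1 + \cdots + a_n + x_n \ge a_1 + \cdots + a_n$ (all summands being nonnegative reals) we get $n\varepsilon \le x$, i.e.\ $n \le x/\varepsilon$. Thus the iteration cannot run forever: it halts at some step $N \le x/\varepsilon$ with $x_N = 0$, yielding $x = a_1 + \cdots + a_N$, an expression of $x$ as a sum of atoms. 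As $x$ was arbitrary, $M$ is atomic.

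There is no real obstacle here; the point worth stressing is that the IDF hypothesis does two jobs simultaneously — it makes the set of irreducible divisors of $x$ finite, and, combined with the order embedding into $\mathbb{R}$ supplied by the Archimedean hypothesis, it upgrades that finiteness to a uniform positive lower bound on the sizes of the atoms that can occur in the greedy decomposition. That lower bound is exactly what prevents an infinite descending chain. Without it (for instance when atoms dividing $x$ accumulate at $0$) the greedy process need not terminate, which is precisely the phenomenon exploited by the non-ascent examples earlier in the paper.
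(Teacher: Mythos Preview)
Your proof is correct and follows essentially the same approach as the paper's: both run a greedy atom-extraction process, observe that every atom occurring divides the original element and hence lies in the finite set of its irreducible divisors, and use the Archimedean order to bound the number of steps by comparing against the minimum of that finite set. The only cosmetic differences are that you argue directly (termination) rather than by contradiction, and you invoke H\"older's theorem to work inside $\mathbb{R}_{\ge 0}$ whereas the paper stays in the abstract ordered group; neither affects the mathematical content.
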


\begin{proof}
    Assume that $M$ is a TIDF-monoid. Since $M$ is a positive Archimedean monoid, we can suppose that its Grothendieck group $\gp(M)$ is a linearly ordered abelian group under the total order $\preceq$, and also that $M$ is a submonoid of the nonnegative cone of $\gp(M)$ (we can also use H\"older's theorem, and assume that $M \subseteq \rr_{\ge 0}$). Since $M$ is a positive monoid, it must be reduced.
    
	Suppose, by way of contradiction, that the monoid $M$ is not atomic. Then we can take a non-atomic element $q_0 \in M$. Now set $A_1 := \{a \in \mathcal{A}(M) : a \mid_M q_0 \}$. Note that $A_1$ is nonempty and finite because $M$ is a reduced TIDF-monoid and $q_0 \neq 0$. Thus, $A_1$ has a minimum element, and we can set $q_1 := q_0 - \min A_1$. Observe that $q_1$ is not an atomic element because $q_0$ is not an atomic element. For the inductive step of our construction, suppose we have produced, for some $n \in \nn$, a finite descending chain $A_1, \dots, A_n$ of nonempty finite subsets of $\mathcal{A}(M)$ and non-atomic elements $q_1, \dots, q_n \in M$ such that $q_i = q_{i-1} - \min A_i$ for every $i \in \ldb 1,n \rdb$.
	Now set
	\[
		A_{n+1} := \{a \in \mathcal{A}(M) : a \mid_M q_n \}.
	\]
	Because $q_n \mid_M q_{n-1}$, the inclusion $A_n \supseteq A_{n+1}$ holds. In addition, observe that $A_{n+1}$ is a nonempty and finite set because $M$ is a reduced TIDF-monoid and $q_n \neq 0$ (because $q_n$ is non-atomic). Now set $q_{n+1} := q_n - \min A_{n+1}$ and then observe that $q_{n+1}$ is a non-atomic element because $q_n$ is a non-atomic element. After repeating this process indefinitely, we obtain a descending chain $(A_n)_{n \ge 1}$ of nonempty finite subsets of $\mathcal{A}(M)$ and a sequence $(q_n)_{n \ge 0}$ whose terms are non-atomic elements of~$M$ such that $q_n - q_{n+1} =  \min A_{n+1}$ for every $n \in \nn_0$. For each $n \in \nn$, we can now write
	\[
		q_0  = q_n + \sum_{j=0}^{n-1} (q_j - q_{j+1}) = q_n + \sum_{j=0}^{n-1} \min A_{j+1} \succeq n \min A_1.
	\]
	Since $\min A_1 \succ 0$, the fact that $q_0 \succeq n \min A_1$ for every $n \in \nn$ contradicts that~$M$ is a positive Archimedean monoid. 
\end{proof}

We obtain the following corollary on the ascent of both the finite factorization property and the TIDF property.

\begin{cor}
    For a positive Archimedean monoid $M$, the following statements hold.
    \begin{enumerate}
        \item If $M$ is an FFM, then $\Pfin(M)$ is an FFM.
        \smallskip
        
        \item If $M$ is a TIDF-monoid, then $\Pfin(M)$ is a TIDF-monoid.
    \end{enumerate}
\end{cor}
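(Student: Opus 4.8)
The plan is to deduce both parts from the single assertion $(\star)$: \emph{if $M$ is a positive Archimedean monoid that is an FFM, then $\Pfin(M)$ is an FFM.} Part~(1) is exactly $(\star)$. For part~(2), if $M$ is a positive Archimedean TIDF-monoid, then Proposition~\ref{prop:positive Archimedean TIDF monoids are atomic} shows that $M$ is atomic; being an atomic IDF-monoid, $M$ is then an FFM by~\cite[Theorem~2]{fHK92}, so $(\star)$ yields that $\Pfin(M)$ is an FFM, which in particular is a TIDF-monoid. Thus it suffices to prove $(\star)$, and in fact only the reducedness of $M$ (positive monoids are reduced) and the FFM hypothesis will be used.

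To prove $(\star)$, fix such an $M$. First I would record that $\Pfin(M)$ is reduced: if $S+T=\{0\}$, then part~(1) of Lemma~\ref{lem:size of the sum} forces $|S|=|T|=1$, say $S=\{s\}$ and $T=\{t\}$, and then $s+t=0$ gives $s=t=0$. Also, $\Pfin(M)$ is unit-cancellative by the consequence of~\cite[Proposition~3.5]{FT18} quoted in Section~\ref{sec:prelim}. The heart of the argument is the following claim: \emph{every $S\in\Pfin(M)$ has only finitely many divisors in $\Pfin(M)$.} To see this, suppose $A\mid_{\Pfin(M)}S$ and write $A+B=S$; put $m:=\min B\in M$. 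By Lemma~\ref{lem:min and max} we have $\min A+m=\min S$, so $m\mid_M\min S$. Since $M$ is an FFM, $\min S$ has only finitely many factorizations and hence only finitely many divisors (finitely many on the nose, since $M$ is reduced), so $m$ ranges over a finite set. On the other hand $m\in B$, so $A+\{m\}\subseteq A+B=S$, which means $A\subseteq\{s-m:s\in S\}\cap M$, a set of at most $|S|$ elements. Therefore each divisor $A$ of $S$ is determined by a choice of $m$ from a finite set together with a subset of the finite set $\{s-m:s\in S\}\cap M$, proving that $S$ has finitely many divisors.

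Granting the claim, $\Pfin(M)$ is an IDF-monoid, since for any non-invertible $S$ the atoms dividing $S$ are among its finitely many divisors (and ``up to associates'' is vacuous because $\Pfin(M)$ is reduced). It remains to show $\Pfin(M)$ is atomic, for then $\Pfin(M)$ is an atomic IDF-monoid, hence an FFM by~\cite[Theorem~2]{fHK92}. For this I would apply Theorem~\ref{thm:power monoid atomic}, so it is enough to check that $M$ is an atomic MCD-monoid: $M$ is atomic since it is an FFM, and $M$ is an MCD-monoid because, given a nonempty finite $S\subseteq M$, the common divisors of $S$ form a finite nonempty subset of $M$ (each lies among the finitely many divisors of a fixed element of $S$), so this set has a divisibility-maximal element $d$; if some $e\in M\setminus\{0\}$ divided every $s-d$ with $s\in S$, then $d+e$ would be a common divisor of $S$ with $d\mid_M d+e$ and $d\ne d+e$, contradicting maximality, so $d$ is an MCD of $S$. (Alternatively, atomicity of $\Pfin(M)$ follows directly from the claim: a reduced, unit-cancellative monoid in which every element has finitely many divisors is atomic, by induction on the number of divisors.) In either case $\Pfin(M)$ is an atomic IDF-monoid, hence an FFM, and $(\star)$ is proved.

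I expect the main obstacle to be the divisor-finiteness claim for $\Pfin(M)$. Its proof hinges on the observation that $\min B$ must divide $\min S$ in $M$, so the FFM hypothesis on $M$ restricts $\min B$ to a finite set, and each value of $\min B$ in turn confines $A$ to the finite set $\{s-\min B:s\in S\}\cap M$; everything else is bookkeeping. Positivity of $M$ is used only to upgrade ``finitely many divisors up to associates'' to ``finitely many divisors'' and to guarantee that $\Pfin(M)$ is reduced, while Archimedeanity enters solely through Proposition~\ref{prop:positive Archimedean TIDF monoids are atomic} in the reduction of part~(2).
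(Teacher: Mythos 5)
Your proposal is correct, and part~(2) is exactly the paper's reduction: Proposition~\ref{prop:positive Archimedean TIDF monoids are atomic} gives atomicity, \cite[Theorem~2]{fHK92} upgrades TIDF to FFM, and then part~(1) applies. Where you genuinely diverge is part~(1). The paper uses H\"older's theorem to replace $M$ by an isomorphic submonoid of $\rr_{\ge 0}$ and then defers to the argument of \cite[Theorem~4.2]{GLRRT24} \emph{mutatis mutandis}; you instead give a self-contained proof built on a divisor-finiteness claim for $\Pfin(M)$. Your key observation --- that for $A+B=S$ the element $\min B$ must divide $\min S$ in $M$ (Lemma~\ref{lem:min and max}), so it ranges over a finite set, and that $A+\{\min B\}\subseteq S$ then confines $A$ to subsets of a set of size $|S|$ --- is clean and correct, and the remaining steps (IDF-ness of $\Pfin(M)$ from divisor-finiteness, atomicity of $\Pfin(M)$ via Theorem~\ref{thm:power monoid atomic} after checking $M$ is an atomic MCD-monoid, then \cite[Theorem~2]{fHK92}) all go through; the MCD argument from a divisibility-maximal common divisor in a finite poset is valid because $M$ is reduced and cancellative. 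What your route buys is twofold: it removes the dependence on the external proof in \cite{GLRRT24}, and it never actually uses the Archimedean hypothesis in part~(1) --- only positivity (hence reducedness and linear orderability) and the FFM property --- so it in fact proves the slightly stronger statement that the power monoid of any positive FFM is an FFM, with Archimedeanity entering only through Proposition~\ref{prop:positive Archimedean TIDF monoids are atomic} in part~(2). The one step worth spelling out a little more is the standard fact that in a reduced atomic monoid with finitely many factorizations per element, every element has only finitely many divisors (each divisor is a partial sum of some factorization), but this is routine.
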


\begin{proof}
    (1) Assume that $M$ is an FFM. Since $M$ is a positive Archimedean monoid, we can take a linearly ordered abelian group $G$ such that $M$ is a submonoid of the nonnegative cone of $G$. By virtue of H\"older's theorem, $G$ is order-isomorphic to a subgroup of the additive group~$\rr$. Therefore $M$ is order-isomorphic to some additive submonoid of $\rr_{\ge 0}$. Since the power monoids of isomorphic monoids are isomorphic, we can assume that $M$ is a submonoid of $\rr_{\ge 0}$. In order to prove now that the power monoid $\Pfin(M)$ is also an FFM, it suffices to follow, \emph{mutatis mutandis}, the argument given in the proof of \cite[Theorem~4.2]{GLRRT24}.
    \smallskip
    
    (2) Assume now that $M$ is a TIDF-monoid. Since $M$ is a positive Archimedean monoid, it follows from Proposition~\ref{prop:positive Archimedean TIDF monoids are atomic} that $M$ is atomic. As $M$ is atomic and every element of $M$ is only divisible by finitely many atoms (up to associate), it follows from \cite[Theorem~2]{fHK92} that $M$ is an FFM. As a consequence, one can deduce from part~(1) that $\Pfin(M)$ is also an FFM. Therefore we conclude that $\Pfin(M)$ is a TIDF-monoid.
\end{proof}

We conclude the paper proving that, in general, the TIDF property does not ascend to power monoids on the more general class of linearly orderable monoids.

\begin{theorem} \label{thm:non-ascent of TIDF in general}
    There exists a linearly orderable monoid $M$ satisfying the following two conditions:
    \begin{enumerate}
    	\item $M$ is a TIDF-monoid and
    	\smallskip
    	
    	\item $\Pfin(M)$ is not an IDF-monoid.
    \end{enumerate}
\end{theorem}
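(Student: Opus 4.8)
\emph{Proof idea.}
The plan is to reduce the statement to a pure divisibility problem inside a single monoid. First I would record that $\Pfin(M)$ fails to be an IDF-monoid as soon as some finite set $S$ is divisible by infinitely many atoms of $\Pfin(M)$, and that when $M$ itself is IDF this forces $S$ to contain a subset with infinitely many maximal common divisors. The mechanism is pigeonholing via Lemma~\ref{lem:min and max}: writing a divisor of $S$ as $S=A+C$ we get $\min A+\min C=\min S$ and $\min A+\max C\in S$, so the width $\max C-\min C$ lies in the finite set $\{s-\min S:s\in S\}$; since $M$ is IDF there are only finitely many singleton atoms dividing $S$ (if $\{a\}\mid_{\Pfin(M)}S$ then by Corollary~\ref{cor:divisibility from min and max} the atom $a$ divides $\min S$), so after two applications of pigeonhole infinitely many of the non-singleton atom divisors of $S$ have the form $A=X-\mu$ for one fixed $X\subseteq S$, with $\mu=\min C$ running over infinitely many \emph{distinct} elements of $M$, each a \emph{maximal} common divisor of $X$ because $X-\mu$ is coprime. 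Conversely, if $X=\{x_1,x_2,x_3\}$ is a three-element set that is not an arithmetic progression and has infinitely many maximal common divisors $\mu_n$, then each $X-\mu_n$ is indecomposable (a three-element set decomposes only as a three-term arithmetic progression) and coprime, hence an atom of $\Pfin(M)$; these atoms are pairwise distinct, hence non-associate since $\Pfin(M)$ is reduced, and all divide $X$. So it suffices to build a linearly orderable TIDF-monoid $M$ containing such a set $X$.

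Next I would pin down the shape of $M$. By Proposition~\ref{prop:positive Archimedean TIDF monoids are atomic} and the corollary following it, a positive Archimedean TIDF-monoid is an FFM and then $\Pfin(M)$ is an FFM, in particular IDF; so $M$ must be non-Archimedean, of rank at least two. Also $M$ cannot be atomic, since an atomic TIDF-monoid is an FFM, and in an FFM each element has only finitely many divisors, so no finite set can have infinitely many maximal common divisors. Hence $M$ is a non-atomic, non-Archimedean TIDF-monoid (in particular $M$ fails the ACCP). I would realize $M$ as a positive submonoid of $\qq^2$ (or $\qq^3$) with a lexicographic order whose leading coordinate is a ``level''. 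The level-zero slice is a fixed FFM Puiseux-type monoid $A$ possessing infinitely many atoms that form an infinite antichain under divisibility; a convenient choice is $A=\langle 1+\tfrac1p:p\in\pp\rangle$, where one checks that $\mathcal{A}(A)=\{1+\tfrac1p:p\in\pp\}$ (no nontrivial sum of generators can be that small), that $A$ is an FFM (a factorization of $q$ can only use primes $p\le q$, each at most $q$ times), and that distinct atoms are incomparable since $\tfrac1p-\tfrac1q\notin A$; these will be the atoms of $M$ together with one ``infinitesimal'' atom. The level-$\ge 1$ part is then chosen large enough that some level-one element admits an infinite descent with atoms drawn from $A$ (forcing non-atomicity and keeping $M$ Furstenberg), yet small enough that each of its elements is divisible by only finitely many atoms (IDF); and $X$ is placed at level one so that its common divisors, which live at level zero, are exactly those $d\in A$ with all $x_i-d$ in the level-$\ge 1$ part, arranged so that the maximal such $d$ form the infinite antichain $\mathcal{A}(A)$.

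The remaining verifications are then routine bookkeeping: $M$ is reduced and linearly orderable as a positive submonoid of a lexicographically ordered torsion-free abelian group; $M$ is Furstenberg (every level-$\ge 1$ element is divisible by the infinitesimal atom, every level-zero element by an atom of $A$); $M$ is IDF and non-atomic by the design of the level-$\ge 1$ part; and the explicit description of the common divisors of $X$ exhibits its infinitely many maximal common divisors with each $X-\mu$ indecomposable, so $X$ is divisible in $\Pfin(M)$ by infinitely many pairwise non-associate atoms. I expect the genuine difficulty to be concentrated entirely in designing the level-$\ge 1$ part: infinitely many maximal common divisors of $X$ force both an infinite antichain of atoms at level zero and a level-$\ge 1$ part carrying an infinite descent, and the tension is that making the level-$\ge 1$ part permissive enough to support that descent tends to make some level-one element divisible by \emph{all} the level-zero atoms, destroying IDF, whereas making it restrictive enough for IDF tends to make $M$ atomic, hence an FFM, collapsing the maximal common divisors back to finitely many. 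Threading this needle is the heart of the argument.
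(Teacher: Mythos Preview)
Your reduction in the first paragraph is sound and worth keeping: if $M$ is a reduced linearly orderable IDF-monoid and $S\in\Pfin(M)$ has infinitely many non-associate atom divisors, then (since only finitely many singleton atoms can divide $S$) infinitely many of those atoms $A$ satisfy $A+\min C\subseteq S$ for the complementary $C$, so by pigeonhole one fixed subset $X\subseteq S$ admits infinitely many MCDs $\mu_n$ with $X-\mu_n$ an atom. The converse is also fine, and in fact you do not need three-element sets at all: any two-element set is automatically indecomposable, so a two-element $X$ with infinitely many MCDs already does the job.

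The genuine gap is the construction, and your proposed design is not merely delicate but self-defeating. You want the MCDs of $X$ to be precisely the atoms of the level-zero monoid $A=\langle 1+\tfrac1p:p\in\pp\rangle$, and you declare $\mathcal{A}(M)=\mathcal{A}(A)\cup\{\epsilon\}$. But every MCD $\mu_n$ of $X$ divides each $x_i$, so each $x_i$ is divisible in $M$ by every element of the infinite set $\mathcal{A}(A)\subseteq\mathcal{A}(M)$, and $M$ fails IDF immediately. This is exactly the obstruction you flag in your last sentence, but it is not a needle to thread: with MCDs chosen among the atoms of $M$ it is a contradiction. More generally, if the level-zero slice is atomic, then either it has finitely many atoms (and then each $x_i$ has only finitely many level-zero divisors, hence only finitely many MCDs), or it has infinitely many atoms and some $x_i$ is divisible by infinitely many of them. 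Either way your scheme collapses.

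The paper escapes this by going in the opposite direction: it builds $M$ inside an infinite-rank free abelian group so that $\mathcal{A}(M)=\{a,b\}$ consists of just \emph{two} atoms, making IDF trivial, while arranging elements $z,y,x_1,x_2,\ldots$ so that each $\{x_n,x_n-y\}$ divides $\{z,z-y\}$ in $\Pfin(M)$. The point is that $a\nmid_M x_n-y$ and $b\nmid_M x_n$, so (since $M$ is Furstenberg) the only common divisor of $\{x_n,x_n-y\}$ is $0$, whence each such two-element set is an atom of $\Pfin(M)$. In the language of your reduction, the infinitely many MCDs of $\{z,z-y\}$ are the elements $z-x_n$, which are \emph{not} atoms of $M$ and indeed not even atomic; that is precisely how IDF survives. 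If you want to salvage your outline, the fix is to abandon the idea that the MCDs are atoms of $M$ and instead make $\mathcal{A}(M)$ very small, letting the $\mu_n$ be non-atomic elements distinguished by coordinates that the atoms cannot reach.
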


\begin{proof} 
    Let $S := \{a,b,y,z\} \cup \{x_n : n \in \nn\}$ be a set consisting of infinitely many independent variables, and let $\mathcal{G}$ be the free abelian group on~$S$, which can be written as follows:
    \[
        \mathcal{G} = (\ZZ \cdot a) \oplus (\ZZ \cdot b) \oplus (\ZZ \cdot y) \oplus (\ZZ \cdot z) \oplus \bigoplus_{i \in \nn} (\ZZ \cdot x_i).
    \]
	Now set $A := \nn_0 a$ and $B := \nn_0 b$. Then we let $\gp(A)$ and $\gp(B)$ be the Grothendieck groups of the free commutative monoids $A$ and $B$ inside the free abelian group $\mathcal{G}$. For each $n \in \nn$, set
	\[
		X_n := \nn x_n + \gp(A) \quad \text{ and } \quad Y_n := \nn(x_n - y) + \gp(B).
	\]
	Now consider the following submonoid of $\mathcal{G}$:
	\[
		 Z := \nn z + \gp\Big(\Big\langle \bigcup_{n \in \nn} \big( X_n \cup Y_n \big) \Big\rangle \Big).
	\]
	Consider the following submonoid of $\mathcal{G}$:
	\[
		M := \Big\langle A \cup B \cup \Big( \bigcup_{n \in \nn} \big( X_n \cup Y_n\big) \Big) \cup Z \Big\rangle.
	\]
	
    (1) Because $M$ is a submonoid of the free abelian group $\mathcal{G}$, we see that $M$ is a linearly orderable monoid, which means that $M$ is cancellative and torsion-free (by Theorem~\ref{thm:Levi's consequence}). In addition, one can readily verify that $\mathcal{A}(M) = \{a,b\}$ and also that each element of $M$ is divisible by at least one of these two atoms. Therefore $M$ is a TIDF-monoid.
	\smallskip
	
    (2) Now set $\ppp := \Pfin(M)$. As $x_1 - y \in Y_1 \subseteq Z$ and $z - x_1 \in Z$, it follows that $z-y \in Z$ and so $\{z,z-y\} \subset Z \subset M$. We claim that the atoms of $\ppp$ dividing $\{z, z + y\}$ form an infinite set up to associates, from which we deduce that $\ppp$ is not an IDF-monoid. Note that $\{ x_n, x_n - y \} \mid_\ppp \{z, z-y\}$ for every $n \in \nn$. Thus, it suffices to fix $n \in \nn$ and then argue that $\{x_n, x_n-y \}$ is an atom of $\ppp$. To do so, write $\{x_n, x_n-y\} = S + T$ for some $S,T \in \ppp$. It follows from Lemma~\ref{lem:size of the sum} that either $|S| = 1$ or $|T| = 1$. Assume, without loss of generality, that $S$ is a singleton and take $s \in M$ such that $S = \{s\}$. Then we see that $s$ is a common divisor of $x_n$ and $x_n - y$ in $M$. It is clear that $b \nmid_M x_n$ and, therefore, $b \nmid_M s$. On the other hand, the fact that $y \nmid_M x_n$ guarantees that $a \nmid_M x_n - y$, which in turn implies that $a \nmid_M s$. Therefore, neither of the two atoms of $M$ divides $s$. Now the fact that~$M$ is a Furstenberg monoid implies that $s \in \uu(M)$ and so that $S$ is an invertible element of $\ppp$. As a consequence, $\br{x_n, x_n - y}$ is an atom of $\ppp$ for every $n \in \nn$, which means that~$\ppp$ is not even an IDF-monoid.
\end{proof}

\bigskip
\section*{Acknowledgments}

While working on this paper, the authors were part of PRIMES, a year-long math research program hosted by the MIT Math Department. The authors would like to express their gratitude to the directors and organizers of PRIMES for making this research experience possible. During the period of this collaboration, the second author was kindly supported by the NSF under the award DMS-2213323.

\bigskip
\section*{Conflict of Interest Statement}

On behalf of all authors, the corresponding author states that there is no conflict of interest related to
this paper.

\bigskip

\end{document}